\newtheorem{thm}{Theorem}[section]
\newtheorem{defn}{Definition}[section]
\newtheorem{notn}{Assumption--Notation}
\newtheorem{prop}{Proposition}[section]
\newtheorem{lem}{Lemma}[section]
\newtheorem{cor}{Corollary}[section]
\newtheorem{conj}{Conjecture}[section]
\begin{document}
\bibliographystyle{plain}
\title[]{A problem about Mahler functions }
\author{Boris Adamczewski}
\address{
CNRS, Universit\'e de Lyon, Universit\'e Lyon 1\\
Institut Camille Jordan  \\
43 boulevard du 11 novembre 1918 \\
69622 Villeurbanne Cedex, France}

\email{Boris.Adamczewski@math.univ-lyon1.fr}

\author{Jason P.~Bell}
\thanks{The first author was supported by the project  HaMoT, ANR 2010 BLAN-0115-01. The second author was supported by NSERC grant 31-611456.}

\address{
Department of Pure Mathematics\\
University of Waterloo\\
Waterloo, ON, Canada\\
 N2L 3G1}

\email{jpbell@uwaterloo.ca}




\bibliographystyle{plain}

\begin{abstract}  
Let $K$ be a field of characteristic zero and $k$ and $l$ be two multiplicatively independent positive integers. 
We prove the following result that was conjectured by Loxton and van der Poorten during the Eighties: 
a power series $F(z)\in K[[z]]$ satisfies both a $k$- and a $l$-Mahler type functional equation if and only if it is 
a rational function. 
\end{abstract}
\dedicatory{In memory of Alf van der Poorten}

\maketitle

\tableofcontents

\section{Introduction}

In a series of three papers \cite{Mah29,Mah30A,Mah30B} published in 1929 and 1930, Mahler initiated a 
totally new direction in transcendence theory.   {\it Mahler's method}, a term coined much later by  Loxton and van der Poorten,  
aims at proving transcendence and algebraic independence of values at algebraic points of 
locally analytic functions satisfying certain type of functional equations. 
In its original form, it concerns equations of the form   
\begin{equation}\label{eq: origin}
F(z^k) = R(z,F(z)) \, , 
\end{equation}
where $R(z,x)$ denotes a bivariate rational function with coefficients in a number field.   
For instance, using the fact that $F(z)= \sum_{n=0}^{\infty} z^{2^n}$ satisfies 
the basic functional equation
$$
F(z^2) = F(z) - z \, ,
$$
Mahler was able to prove  that $F(\alpha)$ is a transcendental number for every algebraic 
number $\alpha$ with $0<\vert \alpha \vert <1$. 
As observed by Mahler himself, his approach allows one to deal with functions of several variables 
and systems of functional equations as well.  
It also leads to algebraic independence results, transcendence measures, measures of algebraic independence, and so forth.
Mahler's method was later developed by various authors, including Becker, Kubota, Loxton and 
van der Poorten, Masser, Nishioka, T\"opfer, among  others.  
For classical aspects of Mahler's theory, we refer the reader to the monograph of Ku. Nishioka \cite{Ni_Liv} and the reference therein.  
However, a major deficiency of Mahler's method is that, contrary to Siegel E- and G-functions,  
there is not a single classical transcendental constant that is known to be the value at an algebraic point of an analytic function 
solution to a Mahler-type functional equation\footnote{
A remarkable discovery of Denis, which deserves to be better 
understood, is that Mahler's
method can be also applied to prove transcendence and algebraic 
independence results involving
{\em periods of $t$-modules} which are variants of the more classical 
periods
of abelian varieties, in the framework of the arithmetic of function 
fields of
positive characteristic. For a detailed discussion on
this topic, we refer the reader to the recent survey by Pellarin  \cite{Pel2}, see also \cite{Pel1}.}. 
This may explain why it was somewhat neglected for almost fifty years.

\medskip

At the beginning of the Eighties, Mahler's method really took on a new significance after Mend\`es 
France popularized the fact that some Mahler-type systems of functional equations  
naturally arise in the study of automata theory (see for instance \cite{MF}). 
Though already noticed in 1968 by Cobham \cite{Cob68}, this connection remained relatively unknown at that time, 
probably because Cobham's work was never published in an academic journal.  
Cobham claimed that Mahler's method has the following nice consequence for the Hartmanis--Stearns 
problem about the computational complexity of algebraic irrational real numbers \cite{HS}: 
the expansion of an algebraic irrational number in an integer base cannot be generated by a finite automaton. 
His idea was  to derive this result by applying Mahler's method to 
systems of functional equations of the form 
\begin{equation}\label{eq: new}
\left( \begin{array}{ c }
     F_1(z^k) \\
     \vdots \\
     F_n(z^k)
  \end{array} \right) = A(z)\left( \begin{array}{ c }
     F_1(z) \\
     \vdots \\
     F_n(z)
  \end{array} \right) + B(z) \, ,
\end{equation}
where $A(z)$ is an $n\times n$ matrix and $B(z)$ is an $n$-dimensional vector, both having entries that are rational functions 
with algebraic coefficients.    
Though Cobham's conjecture is now proved in \cite{AdBu07} by mean of a completely different approach,  it still 
remains a challenging problem to complete the proof he envisaged.  In this direction, a great deal of work has been done by 
Loxton and van der Poorten \cite{LvdP82,LvdP88}  and 
a particular attention was then paid to systems of functional equations as in  (\ref{eq: new}) (see for instance \cite{Ni90,Ni91,Ni_Liv,Bec}).  

Let $K$ be field. We observe that a power series $F(z)\in K[[z]]$ is a component 
of a vector satisfying a system of functional equations of the form (\ref{eq: new})\footnote{We assume here that the entries of $A(z)$ and $B(z)$ are  in $K(z)$.} 
if and only if the family 
 $$
 1,F(z),F(z^k),F(z^{k^2}),\ldots
 $$ 
 is linearly dependent over the field 
$K(z)$, that is, if there exist a natural number $n$ and polynomials 
$A(z),P_0(z),\ldots , P_n(z)\in K[z]$, not all of which are zero, such that
\begin{equation} \label{eq: mahler}
 A(z) + \sum_{i=0}^n P_i(z) F(z^{k^i}) \ = \ 0.
\end{equation}  
Following Loxton and van der Poorten \cite{LvdP88}, we say that a power series $F(z)\in K[[z]]$ is 
a $k$-\emph{Mahler function}, or for short is $k$-\emph{Mahler},  if it satisfies a functional equation of the form (\ref{eq: mahler}).

\medskip

Beyond transcendence, Mahler's method and automata theory,  
it is worth mentioning that Mahler functions naturally occur as generating functions  in various other topics such as  
 combinatorics of partitions, numeration and the analysis of algorithms (see \cite{DF} and the references therein and 
 also dozens of examples in \cite{AS2,AS3} and \cite[Chapter 14]{EPSW}). A specially intriguing appearance of Mahler functions is 
 related to the study of Siegel $G$-functions and in particular of diagonals of rational functions\footnote{See for instance 
 \cite{AdBe} for a discussion of the links between diagonals of rational functions with algebraic coefficients and $G$-functions.}. 
 Though no general result confirms this claim, 
 one observes that many generating series associated with the $p$-adic 
 valuation of the coefficients of $G$-functions with rational coefficients turn out to be $p$-Mahler functions.  

As a simple illustration, we give the following example. Let us consider the algebraic function 
$$
\mathfrak f(z) := \frac{1}{(1-z)\sqrt{1-4z}} = \sum_{n=0}^{\infty} \sum_{k=0}^n {2k\choose k} z^n \, 
$$
and define the sequence 
$$
a(n) := \nu_3\left(\sum_{k=0}^n {2k\choose k}\right)\, ,
$$ 
where $\nu_3$ denotes the $3$-adic valuation.   
We claim that the function $$\mathfrak f_1(z) :=\sum_{n\geq 0} a(n)z^n \in \mathbb Q[[z]]$$ is a $3$-Mahler function. 
This actually comes from the following nice equality  
\begin{equation}\label{eq: zag}
\nu_3\left(\sum_{k=0}^n {2k\choose k}\right) = \nu_3\left (n^2 {2n\choose n} \right) \, ,
\end{equation}
independently  proved by Allouche and Shallit in 1989 (unpublished) and by Zagier \cite{Zagier}. Indeed,  
setting $\mathfrak f_2(z) := \sum_{n\geq 0} a(3n)z^n$ 
and $\mathfrak f_3(z) := \sum_{n\geq 0} f(3n+1)z^n$, we infer from Equality (\ref{eq: zag})   that 
\begin{equation*}
\left( \begin{array}{ c }
     \mathfrak f_1(z^3) \\ \\
     \mathfrak f_2(z^3) \\ \\
     \mathfrak f_3(z^3)
  \end{array} \right) = 
A(z)\left( \begin{array}{ c }
     \mathfrak f_1(z) \\ \\
    \mathfrak f_2(z) \\ \\
     \mathfrak f_3(z)
  \end{array} \right) \\  \\
   + B(z)  \, ,
\end{equation*}
with 
$$
A(z) :=   \frac{1}{z^3(1+z+z^2)}
  \left(\begin{array}{ccc}
  z(1+z+z^2)&-z^2&-z\\\\
  0&z^2(1+z)&-z^4\\\\
  0&-z^2&z^2(1+z)
  \end{array}\right)
  $$
and 
$$
B(z) := \displaystyle \frac{1}{z^3(1+z+z^2)}
 \left( \begin{array}{ c }
  \displaystyle  \frac{z(2z^2-1)}{z-1} \\ \\
  \displaystyle   -\frac{z^4}{z-1} \\\\
    \displaystyle \frac{z^2(1+z)}{z-1}
  \end{array} \right)  \, .
$$
A simple computation then gives the relation
$$
a_0(z) + a_1(z)\mathfrak f_1(x) + a_2(z)\mathfrak f_1(z^3) + a_3(z)\mathfrak f_1(z^9)+a_4(z)\mathfrak f_1(z^{27}) =0 \, ,
$$
where 
$$\begin{array}{ll}
a_0(z):= & z+2z^2-z^3+z^4+3z^5-z^7+3z^8+z^9-z^{11}+3z^{12}-2z^{14} \\ 
    &-z^{15} +2z^{16}-2z^{17}-2z^{18}+2z^{21},\\
a_1(z):=&-1-z^4-z^8+z^9+z^{13}+z^{17},\\
a_2(z):=& 1+z+z^2+z^3+z^4+z^5+z^6+z^7+z^8-z^{13}-z^{14}-z^{15}-z^{16}\\
   & -z^{17}-z^{18}-z^{19}-z^{20}-z^{21},\\
a_3(z):=&-z^3-z^6-z^7-z^9-z^{10}-z^{11}-z^{13}-z^{14}+z^{16}-z^{17}+z^{19}\\ 
&+z^{20}+z^{22}+z^{23}+z^{24}+z^{26}+z^{27}+z^{30}, \\
a_4(z):=&z^{21}-z^{48} \, .
\end{array}
$$
Of course, considering the Hadamard product (denoted by $\odot$ below) of several algebraic functions 
would lead to similar examples associated 
with transcendental $G$-functions. For instance, 
the elliptic integral 
$$
\mathfrak g(z):= \frac{2}{\pi} \int_{0}^{\pi/2} \frac{d\theta}{\sqrt{1-16z\sin^2 \theta}}
 = \frac{1}{\sqrt{1-4z}} \odot \frac{1}{\sqrt{1-4z}} = \sum_{n=0}^{\infty} {2n\choose n}^2 z^n
$$
is a transcendental $G$-function and it is not hard to see that, for every prime $p$, 
$$
\mathfrak g_p(z) := 
\sum_{n=0}^{\infty} \nu_p\left( {2n\choose n}^2 \right)z^n
$$
is a $p$-Mahler function. 

\medskip

Regarding (\ref{eq: origin}), (\ref{eq: new}) or (\ref{eq: mahler}), 
it is tempting to ask about the significance of the integer 
parameter $k$. Already in 1976, van der Poorten \cite{vdP76} 
suggested that two solutions of Mahler-type functional equations associated with essentially 
distinct parameters should be completely different. 
For instance,  one may naturally expect \cite{vdP76} (and it is now proved \cite{Ni94}) that the two functions 
$$
\sum_{n=0}^{\infty} z^{2^n} \; \mbox{ and } \; \sum_{n=0}^{\infty} z^{3^n}
$$
are algebraically independent over $\mathbb C(z)$.   
This idea was later formalized by Loxton and van der Poorten who made a 
general conjecture whose one-dimensional version can be stated as follows. 

\begin{conj}[Loxton and van der Poorten]\label{conj} 
Let $k$ and $l$ be two multiplicatively independent positive integers and $L$ be a number field. Let $F(z)\in L[[z]]$ 
be a locally analytic function that is both $k$- and $\ell$-Mahler. Then $F(z)$ must be a rational function. 
\end{conj}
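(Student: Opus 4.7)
The plan is to proceed by contradiction, assuming that $F \in L[[z]]$ is both $k$- and $\ell$-Mahler but not rational, and to exploit the rigid interplay between the two Mahler operators enabled by the multiplicative independence of $k$ and $\ell$.

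\textbf{Step 1: A joint invariant module.} Write $\sigma_k$ for the $K$-algebra endomorphism $g(z) \mapsto g(z^k)$ of $K[[z]]$, and similarly $\sigma_\ell$. The $k$-Mahler condition \eqref{eq: mahler} says exactly that the orbit $\{\sigma_k^i F\}_{i \geq 0}$ lies, modulo $K(z)$, in a finite-dimensional $K(z)$-module. Applying $\sigma_\ell^j$ to the $k$-Mahler equation produces a $k$-Mahler equation for $\sigma_\ell^j F$ (with coefficients transformed by $\sigma_\ell^j$), and symmetrically with $k$ and $\ell$ swapped. Iterating these observations shows that the bi-orbit $\{\sigma_k^i \sigma_\ell^j F : i,j \geq 0\}$ spans, modulo $K(z)$, a finite-dimensional $K(z)$-module $W$ that is stable under both $\sigma_k$ and $\sigma_\ell$ acting as semilinear endomorphisms.

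\textbf{Step 2: Matrix compatibility and singularity propagation.} Fixing a $K(z)$-basis of $W$ yields matrices $M, N \in \mathrm{GL}_d(K(z))$ representing $\sigma_k$ and $\sigma_\ell$, and the identity $\sigma_k \sigma_\ell = \sigma_\ell \sigma_k = \sigma_{k\ell}$ translates to the compatibility relation $M(z^\ell)\, N(z) = N(z^k)\, M(z)$. Any singularity of $F$ in $\overline{K}$ must come from a zero or pole of some entry of $M$ or $N$, and must moreover propagate both under $z \mapsto z^{1/k}$ and under $z \mapsto z^{1/\ell}$ (each application producing further singularities, unless one lands at a zero/pole of the matrix entries). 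The multiplicative independence of $k$ and $\ell$ makes the orbit of any point that is not $0$, $\infty$, or a root of unity under the semigroup generated by these two maps infinite, incompatible with the finiteness of this singular locus; this forces the singularities of $F$ to be concentrated at $0$, $\infty$, and roots of unity.

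\textbf{Step 3: Generalized expansions and the main obstacle.} To conclude rationality from this severely constrained singularity structure, I would work in the ring $\KKn$ of generalized (Hahn-type) power series, expanding $F$, $\sigma_k F$, and $\sigma_\ell F$ at each admissible singular point. The Mahler equations restrict the support of such an expansion to be asymptotically stable under the dilations $e \mapsto e/k$ and $e \mapsto e/\ell$ on the exponent group. By multiplicative independence, the only nonempty subsets of $\mathbb{Q}$ that are bounded below and simultaneously stable under both dilations are trivial, which forces each local expansion of $F$ to have finite support, hence forces $F \in K(z)$. The principal technical obstacle, and what the multivariable and Hahn-series formalism of the preamble seems designed to address, is the \emph{support-propagation lemma}: rigorously showing that in the generalized-series setting both Mahler equations force the supports of all non-rational components to be closed under the corresponding multiplicative dilations. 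Once this structural lemma is established, multiplicative independence furnishes the desired contradiction and the conjecture follows.
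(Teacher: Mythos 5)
Your proposal is not a proof but an outline with a gap that you yourself acknowledge, and it takes a route that is fundamentally different from --- and historically known to be much harder than --- the one the paper actually follows.

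The approach you sketch (build a finite-dimensional $K(z)$-module jointly stable under $\sigma_k$ and $\sigma_\ell$, exploit the compatibility relation $M(z^\ell)N(z)=N(z^k)M(z)$, constrain the singular locus, then argue via the arithmetic of exponents in some generalized-series expansion) is precisely the functional-equation/algebraic-independence route that Loxton and van der Poorten themselves envisaged. The introduction of the paper explicitly notes that they ``hoped to obtain a totally new proof of Cobham's theorem by using Mahler's method,'' and that the paper's proof ``follows a totally different way and ultimately relies on Cobham's theorem.'' Your inference that the Hahn-series macros in the preamble reveal the intended method is mistaken: $\KKn$, $\KKxy$, etc.\ are never invoked anywhere in the paper; they are residual template code. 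The paper's actual argument is: (a) a local--global rationality principle reducing to the case where $R$ is a principal localization of a number ring (Section 5); (b) the decomposition $F = G\cdot\Pi^{-1}$ of a Mahler function into a Becker ($k$-regular) series times the inverse of an infinite product $\prod P_0(x^{k^i})$ (Section 7); (c) a delicate asymptotic analysis of growth near roots of unity to remove bad singular factors from $P_0$ (Section 9); (d) a Chebotarev-density argument to produce infinitely many primes $\mathfrak P$ modulo which the remaining infinite-product factors are automatic (Section 10); (e) Cobham's theorem applied to $F\bmod\mathfrak P$, then the local--global principle to conclude. None of these ingredients appear in your sketch.

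The central gap in your proposal is, as you say yourself, the ``support-propagation lemma.'' But this is not a technical loose end --- it is the entire content of the theorem, and it is moreover not clear that the statement you formulate is even correct. First, the assertion in Step 2 that the singular locus of a $k$-Mahler function is finite is already false in general: $\sum z^{2^n}$ has the unit circle as a natural boundary, so singularities are dense there; you must explain why imposing a second, $\ell$-Mahler equation forces finiteness, and that is already as hard as the original problem. Second, the key claim in Step 3, that the only nonempty subsets of $\mathbb Q$ bounded below and stable under both $e\mapsto e/k$ and $e\mapsto e/\ell$ are trivial, is false as stated: for any $e>0$ the set $\{ek^{-a}\ell^{-b}:a,b\ge 0\}$ is nonempty, bounded below by $0$, and stable. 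You would need to invoke well-orderedness of Hahn supports and argue much more carefully, and it is far from clear this can be pushed through. Until that lemma is stated precisely and proven, the proposal establishes nothing.
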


We recall that two integers $k$ and $l$  larger than $1$ are 
multiplicatively independent if there is no pair of positive integers $(n,m)$ such that $k^n=\ell^m$, or equivalently, 
if $\log (k)/\log(\ell) \not\in \mathbb Q$. Conjecture \ref{conj} first appeared in print in 1987 in a paper of van der Poorten \cite{vdP87}. 
Since then it was explicitly studied in a number of different contexts including in some papers of Loxton \cite{Lox88}, Becker \cite{Bec}, 
Rand\'e \cite{Ran93}, Bell \cite{Bell} and the monograph of 
Everest {\it et al.}  \cite{EPSW}.  Independently, Zannier also considered a similar question  in \cite{Zannier}. 

\medskip

In this paper, our aim is to prove the following result. 

\begin{thm} Let $K$ be a field of characteristic zero and let $k$ and $l$ be two multiplicatively independent positive integers.  
Then a power series $F(z)\in K[[z]]$ is both $k$- and $\ell$-Mahler if and only if  it is a rational function.
\label{thm: main}
\end{thm}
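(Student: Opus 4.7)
The easy direction is immediate: any rational function $F=P/Q\in K(z)$ satisfies $Q(z)F(z)-P(z)=0$, which is a $k$-Mahler relation of order zero for every $k\geq 2$. For the substantive direction, assume $F\in K[[z]]$ is simultaneously $k$- and $\ell$-Mahler. My plan is to embed $F$ into an ambient ring of Hahn-type generalized series on which both substitution operators $\sigma_k:z\mapsto z^k$ and $\sigma_\ell:z\mapsto z^\ell$ extend to commuting $K$-algebra endomorphisms. The auxiliary multi-variable Hahn-style rings $\KKn$ and $\KKxyy$ defined in the preamble strongly suggest this is the natural framework of the paper.

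Once this framework is in place, the two Mahler equations trap $F$ in a $\sigma_k$-stable finite-dimensional $K(z)$-vector space $V_k$ and in a $\sigma_\ell$-stable finite-dimensional $K(z)$-vector space $V_\ell$ (after enlarging by a line to absorb the inhomogeneous term $A(z)$ appearing in (\ref{eq: mahler})). The next step is to consolidate these into a single finite-dimensional $K(z)$-vector space $V$ stable under \emph{both} $\sigma_k$ and $\sigma_\ell$, for example by taking the $K(z)$-span of the double-indexed family $\{F(z^{k^i\ell^j})\}_{i,j\geq 0}$, which the two relations force to be finite-dimensional.

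On $V$, both $\sigma_k$ and $\sigma_\ell$ act as commuting $K$-semilinear operators. Through an analysis of their joint orbit structure---crucially using that $\sigma_k^a\neq\sigma_\ell^b$ for all $(a,b)\neq(0,0)$ by multiplicative independence---the only way these operators can coexist on a common finite-dimensional space containing $F$ should be if $F$ already satisfies a degree-one relation $Q(z)F(z)=P(z)$ over $K(z)$. The role of multiplicative independence here is precisely to rule out the ``resonant'' situations $k^a=\ell^b$ that would otherwise permit nontrivial joint invariant subspaces.

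The principal obstacle is this last step: translating the rigidity of commuting semilinear operators on a finite-dimensional space into genuine rationality of $F$. I expect this to require a careful valuation analysis of $F(z^{k^a\ell^b})$ at cyclotomic-type points on $|z|=1$, combined with a counting argument exploiting the multiplicative density of $\{k^a\ell^b\}_{a,b\geq 0}$ forced by $\log k/\log\ell\notin\mathbb{Q}$. Arranging the ambient generalized-series ring so that valuations behave well under both $\sigma_k$ and $\sigma_\ell$ simultaneously---rather than just one at a time---is likely the most delicate technical point, and plausibly explains the elaborate multi-variable Hahn rings appearing in the preamble.
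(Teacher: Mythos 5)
Your proposal diverges fundamentally from the paper's proof, and more importantly it has a genuine gap precisely where the real difficulty lies. The easy direction is fine, and the observation that $\{F(z^{k^i\ell^j})\}_{i,j\ge 0}$ spans a finite-dimensional $K(z)$-vector space stable under both $\sigma_k$ and $\sigma_\ell$ is correct---in fact this is Proposition~\ref{prop: k'l'} in the paper, though there it is used only to normalize $k$ and $\ell$ so that each has a prime factor not dividing the other. But the step you yourself flag as the ``principal obstacle,'' namely passing from the existence of a joint $\sigma_k$-- and $\sigma_\ell$--stable finite-dimensional space to the rationality of $F$, is not a rigidity phenomenon you can extract from the module structure alone. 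Two commuting difference operators can act on a finite-dimensional $K(z)$-module containing a non-rational element; nothing in your sketch rules this out. Your suggestion to finish by ``valuation analysis at cyclotomic-type points'' and a ``counting argument'' is exactly the unproved content of the theorem repackaged, and multiplicative independence enters your argument only as a slogan, not in a form that does any work. The paper even records that Loxton and van der Poorten hoped for an argument in this spirit, via algebraic independence of several-variable Mahler functions, and explicitly says the proof presented ``follows a totally different way.''

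A side note on your framing: the Hahn-type macros ($\KKn$, $\KKxyy$, and the rest) in the preamble are never used anywhere in the paper. They are leftover definitions, not evidence of a generalized-series framework, so the inference you drew from them is unfounded.

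For comparison, the paper's actual route is entirely arithmetic and automata-theoretic. One first reduces (Section~\ref{sec: nf}) to the case where $F$ has coefficients in a principal localization $R$ of a number ring, via a local--global principle (Lemma~\ref{lem: rational}): $F$ is rational over $K$ if $F\bmod\mathfrak P$ is rational for an infinite family of maximal ideals $\mathfrak P$ with trivial intersection, the key point being Lemma~\ref{lem: AB}, which bounds the height of a rational Mahler solution by the degrees in the functional equation. One then decomposes $F$ (Proposition~\ref{rem: decomp}) as the product of a $k$-Becker series $G$ and the inverse of the infinite product $\prod_j P_0(x^{k^j})$, splits the bad factor $P_0$ into a root-of-unity part and a non-root-of-unity part, shows via the asymptotic estimates of Section~\ref{sec: elim} that one may assume $P_0$ has no roots $\alpha$ with $\alpha^{k^j}=\alpha$, and then uses Chebotarev's density theorem (Section~\ref{sec: chebotarev}) to produce infinitely many primes $\mathfrak P$ modulo which the remaining infinite products are $k$- and $\ell$-automatic. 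Since $k$-regular with finitely many values means $k$-automatic (Proposition~\ref{prop: reg2}), one concludes $F\bmod\mathfrak P$ is both $k$- and $\ell$-automatic and invokes Cobham's theorem to get rationality mod $\mathfrak P$, after which the local--global principle finishes. If you want to pursue your own approach you would need an independent proof of the decisive rigidity step, which is essentially a restatement of the theorem itself; absent that, the argument does not close.
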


Let us make few comments on this result.

\medskip

\begin{itemize}

\item[$\bullet$] Taking $K$ to be a number field in Theorem \ref{thm: main} gives Conjecture \ref{conj}.

\medskip

\item[$\bullet$] If $k$ and $\ell$ denote 
 two multiplicatively dependent natural numbers, then a power series 
is $k$-Mahler if and only if it is also $\ell$-Mahler.

 \medskip

\item[$\bullet$] As explained in more details in Section \ref{sec: cobham}, one motivation for proving Theorem \ref{thm: main} 
is that it provides a far-reaching generalization of 
one fundamental result in the theory of sets of integers recognizable by finite automata: Cobham's theorem. 
 Loxton and van der Poorten  \cite{Lox88,vdP87} actually guessed that Conjecture \ref{conj} should be 
a consequence of  some algebraic independence results for Mahler functions of several variables. 
 In particular, they hoped to obtain a totally 
new proof of Cobham's theorem by using Mahler's method.  Note, however, that our proof of Theorem \ref{thm: main}
follows a totally different way and ultimately relies on Cobham's theorem, so we do not obtain an independent derivation of that result.

\medskip

\item[$\bullet$] Another important motivation for establishing Theorem \ref{thm: main} comes from the fact that these kind of statements, 
though highly natural and somewhat ubiquitous, are usually very difficult to prove.   
In particular, similar independence phenomena, involving two multiplicatively independent integers, 
are expected in various contexts but only very few results 
have been obtained up to now.  As an illustration, we quote below three interesting open problems that rest on such a principle,  
all of them being widely open\footnote{In all of these problems, the integers $2$ and $3$ may of course be replaced by any two 
multiplicatively independent 
integers larger than $1$. This list of problems is clearly not exhaustive and could be
easily enlarged.}.  
A long-standing question in dynamical systems is the so-called $\times\, 2 \, \times3$ problem addressed 
by Furstenberg \cite{Fu}:    
prove that  the only Borel measures on $[0,1]$ that are simultaneously ergodic for 
$T_2(x)=2x\pmod 1$ and 
$T_3(x)=3x\pmod 1$ are the Lebesgue measure and measures supported by 
those orbits that are periodic for both actions $T_2$ and $T_3$. 
The following problem, sometimes attributed to 
Mahler, was suggested by Mend\`es France in \cite{MF} 
(see also \cite{AB}): 
given a binary sequence $(a_n)_{n\geq 0}\in \{0,1\}^{\mathbb N}$, prove that 
$$
\sum_{n= 0}^{\infty} \frac{a_n}{2^n} \; \mbox{ and } \;\sum_{n= 0}^{\infty} \frac{a_n}{3^n}
$$
are both algebraic numbers only if both are rational numbers.  
The third problem we mention 
appeared implicitly in work of Ramanujan (see \cite{Waldschmidt}): 
prove that  both $2^x$ and $3^x$ are integers only if $x$ is a natural number.  
This is a particular instance of the four exponentials conjecture, 
a famous open problem in transcendence theory \cite[Chapter 1, p. 15]{Wald}.

\end{itemize}

\medskip

The outline of the paper is as follows. In Section \ref{sec: cobham}, we briefly discuss the connection 
between Theorem \ref{thm: main} and Cobham's theorem. In Section \ref{sec: strategy}, we describe our strategy for proving 
 Theorem \ref{thm: main}. Then the remaining Sections 4--11 are devoted to the different steps of the proof of Theorem \ref{thm: main}.  



\section{Connection with finite automata and Cobham's theorem}\label{sec: cobham}

One motivation for proving Theorem \ref{thm: main} is that it provides a far-reaching generalization of 
one fundamental result in the theory of sets of integers recognizable by finite automata. 
The aim of this section is to briefly describe this connection. For more details on automatic sets and 
automatic sequences, we refer the reader to the book of Allouche and Shallit \cite{AS}.

\medskip

Let $k\ge 2$ be a natural number.  A set ${\mathcal N}\subset \mathbb N$ 
is said to be $k$-\emph{automatic} if there is a finite-state machine that accepts 
as input the expansion of $n$ in base $k$ and outputs $1$ if $n\in {\mathcal N}$ and $0$ otherwise.  
For example, the set of Thue--Morse integers 
$1,2,4,7,8,11,13,\ldots$, formed by the integers whose sum of binary 
digits is odd, is $2$-automatic. 
The associated automaton is given in Figure 1 below. It has two states.  
This automaton successively reads the binary digits of $n$ 
(starting, say, from the most significant digit and the initial state $q_0$) and thus 
ends the reading either in state $q_0$ or in state $q_1$.   The initial 
state $q_0$ gives the output $0$, while $q_1$ gives the output $1$. 

\begin{figure}[htbp]
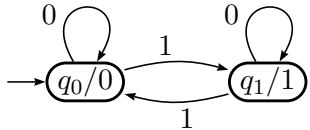

\centering
\VCDraw{%
        \begin{VCPicture}{(0,-1)(4,2)}
 \StateVar[q_0/0]{(0,0)}{a}  \StateVar[q_1/1]{(4,0)}{b}
\Initial[w]{a}
\LoopN{a}{0}
\LoopN{b}{0}
\ArcL{a}{b}{1}
\ArcL{b}{a}{1}
\end{VCPicture}
}
\caption{The finite-state automaton recognizing the set of Thue--Morse integers.}
  \label{AB:figure:thue}
\end{figure}

Another typical $2$-automatic set of integers is given by the powers of $2$: $1,2,4,8,16,\ldots$. 
Though these integers have very simple expansions in base $2$, one can observe that 
this is not the case when writing them in base $3$. 
One of the most important results in the theory of automatic sets formalizes this idea. 
It says that only very well-behaved sets of integers can be automatic with respect to two 
multiplicatively independent numbers. Indeed, in 1969 Cobham \cite{Cobham} 
 proved the following result.

\begin{thm}[Cobham] Let $k$ and $\ell$ be two multiplicatively independent integers. 
Then a set ${\mathcal N} \subseteq \mathbb N$ is both $k$- and $\ell$-automatic if and only if 
it is the union of a finite set and a finite number of arithmetic progressions.  
\label{thm: cob}
\end{thm}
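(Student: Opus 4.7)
The backward direction is immediate: finite sets and arithmetic progressions are $k$-automatic for every $k \geq 2$, and $k$-automatic sets are closed under finite union. For the forward direction, my plan is to follow Cobham's original combinatorial strategy. The first step is to reformulate: letting $\chi_{\mathcal N} \in \{0,1\}^{\mathbb N}$ be the characteristic sequence, $\mathcal N$ has the claimed form if and only if $\chi_{\mathcal N}$ is eventually periodic. So the goal becomes: if $\chi_{\mathcal N}$ is both $k$-automatic and $\ell$-automatic with $k, \ell$ multiplicatively independent, then $\chi_{\mathcal N}$ is eventually periodic.

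Next I would exploit the substitutive characterization of automatic sequences: $\chi_{\mathcal N}$ is a letter-to-letter coding of $\sigma^{\omega}(a)$ for some $k$-uniform morphism $\sigma$ on a finite alphabet, and simultaneously of $\tau^{\omega}(b)$ for some $\ell$-uniform morphism $\tau$. After standard reductions (restricting to the sub-alphabet of recurrent letters, and passing to a power to achieve primitivity), I may assume both $\sigma$ and $\tau$ are primitive, so that letter and factor frequencies in the two fixed points exist and are strictly positive. The key combinatorial input I would use is Cobham's gap lemma: an infinite $k$-automatic set $\mathcal N = \{a_1 < a_2 < \cdots\}$ that is not eventually periodic has unbounded gaps $a_{n+1} - a_n$, and, more precisely, the positions of $1$'s in $\chi_{\mathcal N}$ obey a rigid $k$-adic block structure inherited from $\sigma$ (i.e.\ the factor of length $k^j$ at position $mk^j$ depends only on the state reached by reading the base-$k$ expansion of $m$).

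With these tools in place, the final step is to derive a contradiction from the assumption that $\chi_{\mathcal N}$ is not eventually periodic, by playing the $k$-block structure against the $\ell$-block structure. The multiplicative independence enters through the density of $\{k^{a}\ell^{-b} : a, b \in \mathbb N\}$ in $\mathbb R_{>0}$, which prevents the two scales $k^{j}$ and $\ell^{i}$ from ever aligning. The main obstacle, which is the technical heart of Cobham's theorem, is precisely this last step: translating the non-alignment of the two geometric scales into a quantitative combinatorial contradiction. The $k$-substitutive structure forces strong self-similarity of $\chi_{\mathcal N}$ at scale $k^{j}$ while the $\ell$-substitutive structure does the same at scale $\ell^{i}$; showing that these two forms of self-similarity can coexist only when $\chi_{\mathcal N}$ is eventually periodic is the delicate step I would expect to demand the most care, and is where multiplicative independence must be used in a genuinely quantitative way.
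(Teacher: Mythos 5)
The paper does not actually prove Cobham's theorem; it cites Cobham's 1969 paper and remarks that the original proof ``is elementary but notoriously difficult.'' So there is no internal proof to compare your sketch against.

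Taken on its own terms, your outline has the right overall shape (reformulate as eventual periodicity of the characteristic word, use the uniform-morphism characterization of automaticity), but there are two problems. First, your ``Cobham gap lemma'' is misstated: it is \emph{not} true that an infinite, non-eventually-periodic $k$-automatic set must have unbounded gaps. The set of Thue--Morse integers --- the paper's own first example --- is $2$-automatic, not eventually periodic, and has gaps bounded by $3$. The correct lemma (and the one Cobham actually proves first) is that an infinite set that is recognizable in \emph{two} multiplicatively independent bases is necessarily syndetic, i.e.\ has bounded gaps; this already uses both recognizability hypotheses and multiplicative independence, via the density of $\{k^a\ell^{-b}\}$ in $\mathbb{R}_{>0}$. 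So the lemma as you stated it would not survive the proof, and the logical role you assign it (a property of a single automatic set) is not the role it plays. Second, the step you flag at the end --- showing that a \emph{syndetic} set recognizable in both bases is ultimately periodic --- is exactly the technical heart of Cobham's argument, and you leave it entirely unaddressed. Acknowledging its difficulty is honest, but it is still a genuine gap: without it the proposal is a plan, not a proof. In the morphic/substitutive reformulation you invoke, this is where one needs the machinery of return words and recognizability of primitive substitutions (cf.\ Durand's work, which the paper cites for generalizations of Cobham's theorem), and that machinery does not appear in your sketch.
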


The proof given by Cobham of his theorem is elementary but notoriously difficult and it remains a challenging problem 
to find a more natural/conceptual proof (see for instance the comment in Eilenberg \cite[p. 118]{Eil}). 
There are many interesting generalizations of this result.  A very recent one is due to 
Durand \cite{Dur} and  
we refer the reader to the introduction of \cite{Dur} for a brief but complete discussion 
about such generalizations. 

\medskip

To end this section, let us briefly explain why Cobham's Theorem is a consequence of Theorem \ref{thm: main}. 
Let us assume that ${\mathcal N} \subseteq \mathbb N$ is both $k$- and $\ell$-automatic for multiplicatively independent 
natural numbers $k$ and $\ell$. Set $F(x) := \sum_{n\in \mathcal N} x^n\in \mathbb Z[[x]]$. Then it is known that $F(x)$ is both $k$- and $\ell$-Mahler 
(see for instance \cite[p. 232]{EPSW}). 
By Theorem \ref{thm: main}, it follows that $F(x)$ is a rational function and thus the sequence of coefficients of $F(x)$ does satisfy a linear recurrence. 
Since the coefficients of $F(x)$ take only a two distinct values ($0$ and $1$), we see that this linear recurrence is ultimately periodic. This 
exactly means that $\mathcal N$ is the union of a finite set and a finite number of arithmetic progressions, as claimed by Cobham's theorem.



\section{Sketch of proof of Theorem \ref{thm: main}}\label{sec: strategy}

In this section, we describe the main steps of the proof 
of Theorem \ref{thm: main}. 

\medskip

Let $R$ be a ring and $\mathfrak P$ be an ideal of $R$. If $F(x) = \sum_{n=0}^{\infty} f(n)x^n\in R[[x]]$, 
then we denote by $F_{\mathfrak P}(x)$ the 
reduction of $F(x)$ modulo $\mathfrak P$, that is 
$$
F_{\mathfrak P}(x) = \sum_{n=0}^{\infty} (f(n)\bmod \mathfrak P) x^n\in (R/{\mathfrak P})[[x]] \, .
$$
Let $K$ be a field of characteristic zero and $F(x)\in K[[x]]$ be both $k$- and $\ell$-Mahler.  

\medskip
 
\noindent{\bf Step 0.} This is a preliminary step. 
In the introduction, we defined Mahler functions as those satisfying Equation (\ref{eq: mahler}) 
but it is not always convenient to work with this general form of equations.  
In Sections \ref{sec: form1} and \ref{sec: form2} we show  
that there is no loss of generality  to work with some more restricted types of functional equations. 
Also in Section \ref{sec: kl}, we prove that one can assume 
without loss of generality some additional assumptions on $k$ and $\ell$; namely that there are 
primes $p$ and $q$ such that  $p$ divides $k$ but does not divide $\ell$ and $q$ divides $\ell$ but 
does not divide $k$. 

\medskip
 
\noindent{\bf Step 1.} 
A first observation, proved in Section \ref{sec: nf}, is that the coefficients of the formal power series $F(x)$ 
only belong to some finitely generated $\mathbb Z$-algebra $R\subseteq K$.  Then we prove the following 
useful local--global principle: 
$F(x)$ is a rational function if  it has rational reduction modulo a sufficiently large set of maximal ideals of $R$. 
Using classical results of commutative algebra about Jacobson rings,  
we derive from our local--global principal that there is no loss of generality to assume that $K$ is a number field and that $R$ 
is the principal localization of a number ring.  

\medskip

\noindent{\bf Comment.} Our strategy consists now in applying again our local--global principle.  
Indeed, since $R$ is  the principal localization of a number ring, we have that the quotient ring $R/\mathfrak P$ is a finite field 
for every prime ideal $\mathfrak P$ of $R$.  Our plan is thus to take advantage of the fact that $F_{\mathfrak P}(x)$ has
coefficients in the finite set $R/\mathfrak P$ to prove that $F_{\mathfrak P}(x)$ is both 
a $k$- and an $\ell$-automatic power series, for some prime ideals $\mathfrak P$. 
If this is the case, then Cobham's theorem applies and we get that $F_{\mathfrak P}(x)$ is a rational function.  
The local--global principle actually implies that it is enough to prove that $F_{\mathfrak P}(x)$ is both 
$k$- and $\ell$-automatic  for infinitely many prime ideals $\mathfrak P$ of $R$.

\medskip

\noindent{\bf Step 2.} In Section \ref{sec: auto}, we underline 
the relation between $k$-Mahler, $k$-regular, and $k$-automatic power series.
In particular, we show that every $k$-Mahler power series can 
be decomposed as 
$$
F(x) = G(x)\cdot  \Pi(x) \,,
$$
where $G(x)\in R[[x]]$ is a $k$-regular power series  and 
$\Pi(x)\in R[[x]]$ is the inverse of an infinite product of polynomials.  
Since $F(x)$ is also $\ell$-Mahler, we also have a similar decomposition 
$$
F(x) = H(x)\cdot  \Pi'(x) \,,
$$
where $H(x)\in R[[x]]$ is a $\ell$-regular power series  and 
$\Pi'(x)\in R[[x]]$ is the inverse of an infinite product of polynomials.  
Furthermore, the theory of regular power series 
implies that $G_{\mathfrak P}(x)$ is $k$-automatic and that $H_{\mathfrak P}(x)$ is 
$\ell$-automatic for every prime ideal $\mathfrak P$ of $R$. 

\medskip

In Section \ref{sec: main} we will split both infinite products $\Pi(x)$ and $\Pi'(x)$ and get an expression 
of the form  
$$
F(x) = G(x) \cdot \Pi_1(x) \cdot \Pi_2(x) = H(x) \cdot \Pi'_1(x) \cdot \Pi'_2(x) \, 
$$
where $\Pi_1(x), \Pi_2(x),\Pi'_1(x),\Pi'_2(x)\in R[[x]]$ are inverses of some other infinite products of polynomials.

 \medskip

\noindent{\bf Step 3.} 
In Section \ref{sec: elim}, we look at the singularities of Mahler functions at roots of unity.  
We use asymptotic techniques to show that one can reduce to the case of considering Mahler equations 
whose singularities at roots of unity have a restricted form.  
This ensures, using some results of Section \ref{sec: auto}, that 
$\Pi_1(x)$ is $k$-automatic and that $\Pi'_1(x)$ is $\ell$-automatic 
when reduced modulo every prime ideal $\mathfrak P$ of $R$.

\medskip

\noindent{\bf Step 4.} 
In our last step, we use Chebotarev's density theorem in order to ensure the existence of an 
infinite set ${\mathcal S}$ of  prime ideals of $R$ 
such that  $\Pi_2(x)$ is $k$-automatic  and $\Pi'_2(x)$ is  $\ell$-automatic
when reduced modulo every ideal $\mathfrak P\in \mathcal S$. 

\medskip

\noindent{\bf Conclusion.} 
Since the product of $k$-automatic power series is $k$-automatic, we infer from Steps 2, 3 and 4  
that for every prime ideals $\mathfrak P\in\mathcal S$
the power series $F_{\mathfrak P}(x)$ is both $k$- and $\ell$-automatic.  
By Cobham's theorem, $F_{\mathfrak P}(x)$ is rational for every such prime ideal.  
Then the local--global principle ensures that $F(x)$ is rational, as desired. 



\section{Preliminary reduction for the form of Mahler equations}\label{sec: form1} 

In the introduction, we define $k$-Mahler functions as power series satisfying a functional equation 
of the form given in (\ref{eq: mahler}).  In the literature, they are sometimes defined as solutions of a more restricted 
type of functional equations. We recall here that these apparently stronger conditions on the functional equations actually lead 
to the same class of functions. In the sequel, it will thus be possible to work without loss of generality with these more restricted type of equations. 

\begin{lem}\label{lem: reduction1}
Let us assume that $F(x)$ satisfies a $k$-Mahler equation as in (\ref{eq: mahler}). 
Then there exist polynomials $P_0(x),\ldots,P_n(x)$ in $K[x]$, 
with $\gcd(P_0(x),\ldots,P_n(x))=1$ and $P_0(x)P_n(x)\not=0$, and such that 
\begin{equation}
\label{eq: mahler2}
\sum_{i=0}^nP_i(x)F(x^{k^i})  = 0 \, .
\end{equation}
\end{lem}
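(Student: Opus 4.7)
The plan is to massage the inhomogeneous equation $A(x) + \sum_{i=0}^{n} P_i(x) F(x^{k^i}) = 0$ of (\ref{eq: mahler}) into the required form (\ref{eq: mahler2}) through three successive operations: (i) eliminate the inhomogeneous term $A(x)$, (ii) ensure $P_0 \neq 0$, (iii) ensure $P_n \neq 0$ and cancel any common factor of the coefficients.

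For (i), assuming $A(x) \not\equiv 0$ (otherwise there is nothing to do), I would substitute $x \mapsto x^k$ in the given equation to obtain the auxiliary relation
\[ A(x^k) + \sum_{i=0}^{n} P_i(x^k) F(x^{k^{i+1}}) = 0, \]
then multiply the original relation by $A(x^k)$ and this one by $A(x)$ and subtract. The inhomogeneous parts cancel, leaving a homogeneous identity $\sum_{i=0}^{n+1} \tilde P_i(x) F(x^{k^i}) = 0$ whose coefficients still lie in $K[x]$.

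For (ii), let $m$ be the smallest index with $\tilde P_m \neq 0$; if $m = 0$ nothing is needed, so suppose $m \geq 1$. The plan is a \emph{splitting} operation that drops $m$ by one, to be iterated $m$ times. I would exploit that $K[[x]]$ is free of rank $k$ over $K[[x^k]]$ with basis $1, x, \ldots, x^{k-1}$, and that every $F(x^{k^i})$ with $i \geq 1$ already lies in $K[[x^k]]$. Writing $\tilde P_i(x) = \sum_{j=0}^{k-1} x^j Q_{i,j}(x^k)$ and matching the $x^j$-components of the identity produces, for each $j \in \{0, \ldots, k-1\}$, a relation $\sum_{i=m}^{n+1} Q_{i,j}(w) F(w^{k^{i-1}}) = 0$ in $K[[w]]$ (with $w = x^k$). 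Since $\tilde P_m \neq 0$, some $Q_{m,j}$ is nonzero, and for that choice of $j$ the new identity is a Mahler equation whose smallest nonzero coefficient now sits at index $m-1$.

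For (iii), I would truncate after the last nonzero coefficient (guaranteeing $P_n \neq 0$) and divide through by the gcd of the remaining coefficients to enforce $\gcd(P_0, \ldots, P_n) = 1$. The main obstacle is step (ii): operations (i) and (iii) are essentially formal, whereas (ii) simultaneously uses the decomposition $K[x] = \bigoplus_{j} x^j K[x^k]$ on the coefficient polynomials and $K[[x]] = \bigoplus_{j} x^j K[[x^k]]$ on the ambient power series ring, and requires checking that the $j$ for which $Q_{m,j} \neq 0$ indeed yields a nontrivial Mahler relation whose minimal index of a nonzero coefficient has strictly dropped.
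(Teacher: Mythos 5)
Your proposal is correct and follows essentially the same route as the paper: eliminate $A(x)$ by cross-multiplying with the $x\mapsto x^k$-substituted equation, then use the decomposition $K[[x]]=\bigoplus_{j=0}^{k-1}x^jK[[x^k]]$ to shift the smallest nonzero index down, and finally clear the gcd. The paper's operator $\Lambda_b$ is exactly the projection onto the $x^b$-component in that decomposition, so your step (ii) is the same device; the only (cosmetic) difference is that the paper obtains $P_0\neq 0$ via a minimality argument on $n$ rather than by explicitly iterating the shift.
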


\begin{proof}
Let us assume that $F(x)$ satisfies a $k$-Mahler  equation as in (\ref{eq: mahler}). 
There thus exist some nonnegative integer $n$ and polynomials $A(x),A_0(x),\ldots,A_n(x)$ in $K[x]$, with $A_{n}(x)$ nonzero, 
such that 
$$
\sum_{i=0}^{n} A_i(x)F(x^{k^i}) = A(x) \, .
$$

We first show that we can assume that $A(x)=0$. Indeed, let us assume that $A(x)\not=0$. 
Applying the operator $x\mapsto x^k$ to this equation, we get that 
$$
\sum_{i=0}^{n} A_i(x^k)F(x^{k^{i+1}}) = A(x^k) \, .
$$
Multiplying the first equation by $A(x^k)$ and the second by $A(x)$ and subtracting, we obtain the new equation
$$ 
\sum_{i=0}^{n+1} B_i(x)F(x^{k^i}) = 0 \, ,
$$
where $B_i(x) := A_i(x)A(x^k)-A_i(x^x)A(x)$ for every integer $i$, $1\leq i\leq n$ and where 
$B_{n+1}:= A_n(x^k)A(x)\not=0$. We can thus assume without loss of generality that $A(x)=0$.  

\medskip

Now among all such nontrivial relations of the form 
\begin{equation}\label{eq: mahlerbis}
\sum_{i=0}^{n} P_i(x)F(x^{k^i}) = 0 \, ,
\end{equation}
we choose one with $n$ minimal.  Thus $P_n(x)$ is nonzero. 
We claim $P_0(x)$ is nonzero.  Let us assume this is not the case. 
Pick the smallest integer $j$ such that $P_j(x)$ is nonzero. By assumption, $j >0$. 
Then there is some nonnegative integer $a$ such that the coefficient of 
$x^a$ in $P_j(x)$ is nonzero.  Let $b$ be the unique integer such that $a\equiv b \bmod k$ and 
$0\leq b < k$.  Let us define the operator $\Lambda_b$ from $K[[x]]$ into itself by 
$$
\Lambda_b\left(\sum_{i=0}^{\infty} f(i)x^{i}\right) := \sum_{i=0}^{\infty} f(ki + b)x^{i} \, .
$$
Then every $F(x)\in K[[x]]$ has a unique decomposition as 
$$
F(x) = \sum_{b=0}^{k-1} x^{b} \Lambda_b(F)(x^k) \, ,
$$
which implies that 
$$
\Lambda_b\left(F(x)G(x^k)\right) = \Lambda_b\left(F(x)\right)G(x)
$$
for every pair of power series $F(x),G(x)\in K[[x]]$. 
Applying $\Lambda_b$ to Equation (\ref{eq: mahlerbis}), 
we thus get that 
$$
0= \Lambda_b\left( \sum_{i=j}^{n} P_i(x)F(x^{k^i})\right) = 
\sum_{i=j-1}^{n-1}\Lambda_b\left(P_{i+1}(x)\right) F(x^{k^i}) \, .
$$
By construction, $\Lambda_b(P_j(x))$ is nonzero, which shows that this relation is nontrivial. 
This contradicts the minimality of $n$.  It follows that $P_0(x)$ is nonzero.

\medskip

Furthermore, if $\gcd(P_0(x),\ldots,P_n(x))=D(x)\not=0$, it suffices to divide (\ref{eq: mahlerbis}) 
by $D(x)$ to obtain an equation with the desired properties. This  ends the proof. 
\end{proof}



\section{Reduction to the number field case}\label{sec: nf}

In this section, we show that we may restrict our attention to the case where the base field $K$ 
is replaced by a number field and more precisely by a principal localization of a number ring.

\begin{thm}  
Let us assume that the conclusion of Theorem \ref{thm: main} holds whenever the field $K$ is replaced by 
a principal localization of a number ring. Then Theorem \ref{thm: main} is true. 
\label{thm: red1}
\end{thm}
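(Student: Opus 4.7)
The plan is a standard descent-then-specialization argument: first descend to coefficients in a finitely generated $\mathbb{Z}$-algebra, then specialize to a principal localization of a number ring via the Nullstellensatz, apply the hypothesis at each specialization, and finally lift back using a local--global argument based on Hankel ranks.

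By Lemma \ref{lem: reduction1} I may take Mahler equations $\sum_{i=0}^n P_i(z)F(z^{k^i})=0$ and $\sum_{j=0}^m Q_j(z)F(z^{\ell^j})=0$ with $P_0,P_n,Q_0,Q_m$ nonzero in $K[z]$. Writing $F(z)=\sum f(n)z^n$ and extracting the coefficient of $z^N$ in the first equation yields, for $N$ sufficiently large, a recursion expressing $f(N)$ as a combination of finitely many earlier coefficients, with scalars in the $\mathbb{Z}$-algebra generated by the coefficients of the $P_i$, provided one inverts a single element $\lambda$ (the lowest-degree nonzero coefficient of $P_0$). Adjoining also the coefficients of the $Q_j$ and the finitely many initial values of $F$, I obtain a finitely generated $\mathbb{Z}$-subalgebra $R \subseteq K$ containing every $f(n)$; as a subring of $K$, $R$ is an integral domain.

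Set $R_{\mathbb{Q}} := R \otimes_{\mathbb{Z}} \mathbb{Q}$, a finitely generated $\mathbb{Q}$-algebra and domain. By Hilbert's Nullstellensatz, $R_{\mathbb{Q}}$ is a Jacobson ring whose maximal ideals $\mathfrak{m}$ have residue fields that are number fields, and for every nonzero $r \in R_{\mathbb{Q}}$ the set of such $\mathfrak{m}$ with $r \notin \mathfrak{m}$ is Zariski-dense. Fix an $\mathfrak{m}$ avoiding $\lambda$ and the finitely many nonzero coefficients of $P_0,P_n,Q_0,Q_m$, set $\mathfrak{P}=\mathfrak{m}\cap R$, and let $L=R_{\mathbb{Q}}/\mathfrak{m}$. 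The image of $R$ in $L$ is a finitely generated $\mathbb{Z}$-subalgebra of the number field $L$, and since $\mathcal{O}_L$ is a finitely generated $\mathbb{Z}$-module, this image is contained in $\mathcal{O}_L[1/N]$ for some integer $N$, a principal localization of $\mathcal{O}_L$. The reduction $F_{\mathfrak{P}}(z) \in \mathcal{O}_L[1/N][[z]]$ satisfies both reduced Mahler equations, which remain non-trivial by our choice of $\mathfrak{m}$; by the standing hypothesis, $F_{\mathfrak{P}}$ is then a rational function over $L$.

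To conclude, I argue by contradiction: if $F$ were not rational, its Hankel matrix $(f(i+j))_{i,j\geq 0}$ would have infinite rank over $\mathrm{Frac}(R)$, so for every $D$ there would exist a $D\times D$ Hankel minor $\Delta_D\neq 0$ in $R_{\mathbb{Q}}$; for every $\mathfrak{m}$ in the nonempty, hence Zariski-dense, set simultaneously avoiding $\Delta_D$, $\lambda$, and the coefficients of $P_0,P_n,Q_0,Q_m$, the Hankel matrix of $F_{\mathfrak{P}}$ would have rank $\geq D$, so $F_{\mathfrak{P}}$ as a rational function over $L$ would have degree $\geq D$. The main obstacle is therefore to establish a uniform degree bound: a rational solution of the reduced $k$-Mahler equation must have degree bounded by some $D_0$ depending only on $n$ and on the degrees of the $P_i$, independently of the specialization. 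I expect this from an elementary propagation-of-poles analysis in $\sum_i P_i(z)F(z^{k^i})=0$: if $F=A/B$ is rational with $B$ having a nonzero root $\beta$ that is not a root of unity, then the equation forces the infinite orbit $\beta,\beta^k,\beta^{k^2},\ldots$ to lie in the roots of $B$ or in the zero sets of the $P_i$, which bounds $\deg B$ in terms of the $P_i$; the degree of $A$ is then controlled by the equation itself. Once this bound $D_0$ is available, choosing $D>D_0$ yields the required contradiction and completes the proof.
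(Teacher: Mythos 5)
Your overall strategy mirrors the paper's: descend to a finitely generated $\mathbb{Z}$-algebra $R\subseteq K$ containing all coefficients, pass to the Jacobson $\mathbb{Q}$-algebra $R_{\mathbb{Q}}$, reduce modulo maximal ideals with number-field residue fields, invoke the hypothesis at each such reduction, and lift rationality back via a Hankel-rank argument; collapsing the paper's two-stage descent (first to a number field, then to a principal localization of its ring of integers) into a single step is a harmless streamlining. You also correctly identify that everything hinges on a \emph{uniform degree bound} for rational solutions of a fixed Mahler equation. That bound is precisely the content of the paper's Lemma~\ref{lem: AB}, and it is where your proposal has a genuine gap.

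The ``propagation of poles'' heuristic you offer does not work as stated. In $\sum_i P_i(x)F(x^{k^i})=0$, a pole of $F$ at $\beta$ produces poles of $F(x^{k^i})$ at the $k^i$-th \emph{roots} of $\beta$, not along the forward orbit $\beta,\beta^k,\beta^{k^2},\ldots$, so the orbit is not forced into the zero sets you list by any direct reading of the equation. More importantly, even if a careful pole-cancellation analysis did show that every zero of $B$ must be a root of unity, this would not bound $\deg B$: a polynomial whose zeros are all roots of unity can have arbitrarily large degree, so the claim ``which bounds $\deg B$ in terms of the $P_i$'' does not follow. The paper's Lemma~\ref{lem: AB} gets the bound by a clean divisibility argument with no root-of-unity dichotomy at all: writing $F=A/B$ with $\gcd(A,B)=1$ and $B(0)=1$ and clearing denominators in $\sum_i P_i(x)A(x^{k^i})/B(x^{k^i})=0$, the $i=n$ term shows that $B(x^{k^n})$ divides $P_n(x)B(x)\cdots B(x^{k^{n-1}})$, and a degree count yields $\deg B\le d$ where $d$ bounds the $\deg P_i$; a symmetric argument bounds $\deg A$. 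This bound is uniform under reduction modulo $\mathfrak{P}$ because degrees can only drop, which is exactly what your contradiction step requires. You need to replace the heuristic orbit argument with this (or some other correct) proof of the degree bound; without it the argument is incomplete.
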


We first observe that the coefficients of a Mahler function in $K[[x]]$ actually belong to 
some finitely generated $\mathbb{Z}$-algebra $R\subseteq K$. 

\begin{lem} Let $K$ be a field of characteristic zero, let $k\geq 2$ be an integer, and let 
$F(x)\in K[[x]]$ be a $k$-Mahler power series.  Then there exists a finitely generated 
$\mathbb{Z}$-algebra $R\subseteq K$ such that $F(x)\in R[[x]]$.
\label{lem: KtoR}
\end{lem}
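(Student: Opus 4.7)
The plan is to use the functional equation for $F$ to build an explicit recurrence for the coefficients $f(n)$, and then take $R$ to be the $\mathbb{Z}$-algebra generated by the data of that equation together with finitely many initial values of $f$.

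First, by Lemma~\ref{lem: reduction1}, I may assume that $F$ satisfies a homogeneous equation
$$\sum_{i=0}^n P_i(x)\, F(x^{k^i}) = 0$$
with $P_0(x), P_n(x) \in K[x]$ both nonzero. Writing $P_0(x) = \sum_{j=s}^{t} c_j x^j$ with $c_s \neq 0$, so that $s$ is the $x$-adic valuation of $P_0$, the coefficient $c_s$ will serve as the pivot of my recurrence.

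Next, I extract the coefficient of $x^N$ from the equation. The contribution from $P_0(x) F(x)$ is $\sum_{j=s}^{t} c_j f(N-j)$, the leading-index term being $c_s f(N-s)$. For $i \geq 1$, the contribution from $P_i(x) F(x^{k^i})$ is a linear combination, with coefficients taken from $P_i$, of values $f(m)$ with $m \leq N/k^i$ (the terms for which $k^i \nmid N-j$ simply vanish). Since $k \geq 2$ and $i \geq 1$, the inequality $N/k^i < N - s$ holds as soon as $N > sk^i/(k^i-1)$, and the right-hand side is bounded above by $2s$ uniformly in $i$. Hence, for every $N$ larger than some threshold $N_0 = N_0(s,k)$, the coefficient equation can be solved as
$$c_s\, f(N-s) \ = \ -\sum_{j>s} c_j f(N-j) \ - \ \sum_{i=1}^n \sum_j p_{i,j}\, f\bigl((N-j)/k^i\bigr),$$
expressing $f(N-s)$ explicitly in terms of $c_s^{-1}$, the coefficients $c_j$ and $p_{i,j}$ of the $P_i$'s, and values $f(m)$ with $m < N-s$.

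Finally, I set
$$R := \mathbb{Z}\bigl[c_s^{-1},\ \{\text{coefficients of } P_0,\ldots,P_n\},\ f(0), f(1),\ldots,f(N_0)\bigr] \ \subseteq \ K,$$
which is manifestly a finitely generated $\mathbb{Z}$-subalgebra of $K$. A straightforward induction on $N$, using the recurrence just established, then shows $f(n) \in R$ for every $n \geq 0$, so that $F(x) \in R[[x]]$. The only step that needs any attention is the index inequality that makes the recurrence genuinely recursive in $N$; beyond that, I do not anticipate any real obstacle.
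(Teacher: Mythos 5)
Your proposal is correct and follows essentially the same strategy as the paper: reduce to a homogeneous equation via Lemma~\ref{lem: reduction1}, pivot on the lowest nonzero coefficient of $P_0$ to write a recurrence for the $f(n)$, and let $R$ be the $\mathbb{Z}$-algebra generated by the data of the equation, the inverse of that pivot, and finitely many initial coefficients. The paper extracts the coefficient of $x^{n_0+s}$ (where $s$ is the order of vanishing of $P_0$ at $0$) and runs a minimal-counterexample argument; your version shifts the index, does a forward induction, and is slightly more economical in inverting only $c_s$ rather than all nonzero coefficients of $P_0$ — but these are bookkeeping differences, not a different proof.
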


\begin{proof}  
We first infer from Lemma \ref{lem: reduction1} that there exist a natural number $n$ and 
polynomials $P_0(x),\ldots ,P_n(x)\in K[x]$ with $P_0(x)P_n(x)\neq 0$ such that 
$$
\sum_{i=0}^n P_i(x)F(x^{k^i}) \ = \ 0 \,.
$$ 

Let $d$ be a natural number that is strictly greater than the degrees of the polynomials $P_0(x),\ldots ,P_n(x)$. 
Let $R$ denote the smallest $\mathbb{Z}$-algebra countaining:
\begin{itemize}

\medskip
 
\item the coefficients of $P_0(x),\ldots ,P_n(x)$; 

\medskip

\item the coefficients $f(0),\ldots ,f(d)$;

\medskip

\item the multiplicative inverses of all nonzero coefficients of $P_0(x)$\,. 

\end{itemize}

\medskip

By definition, $R\subseteq K$ is a finitely generated $\mathbb{Z}$-algebra. 
We claim that $F(x)\in R[[x]]$.  To see this, suppose that this is not the case.  
Then there is some smallest natural number $n_0$ such that $f(n_0)\not\in R$.   
Furthermore, $n_0 > d$.  
Consider the equation
\begin{equation}\label{eq: PF} 
P_0(x)F(x) = -\sum_{i=1}^n P_i(x)F(x^{k^i}) \,.
\end{equation}
Let $i$ denote the order of $P_0(x)$ at $x=0$ and 
let $c\not =0$ denote the coefficient of $x^i$ in $P_0(x)$.  
Then if we extract the coefficient of $x^{n_0+i}$ in Equation (\ref{eq: PF}), we see that
$cf(n_0)$ can be expressed as an $R$-linear combination of $f(0),\ldots,f(n_0-1)$. 
Hence $cf(n_0)$ belongs to $R$ by the minimality of $n_0$.  
Since $c^{-1}\in R$ we see that $f(n_0)\in R$, a contradiction.   
This ends the proof.
\end{proof}

We now prove that the height of a rational function which satisfies a Mahler-type equation 
can be bounded by the maximal of the degrees of the polynomials defining the underlying equation.  

\begin{lem} Let $K$ be a field, let $n$ and $d$ be natural numbers, and let $P_0(x),\ldots ,P_n(x)$ 
be polynomials in $K[x]$ of degree at most $d$ with $P_0(x)P_n(x)\neq 0$.  
Suppose that $F(x)\in K[[x]]$ satisfies  
the Mahler-type equation
$$
\sum_{i=0}^n P_i(x)F(x^{k^i})\ = \ 0 \, .
$$  
If $F(x)$ is rational, then there exist polynomials $A(x)$ and $B(x)$ of degree at most  $d$ with 
$B(0)=1$ such that $F(x)$ is the power series expansion of $A(x)/B(x)$.
\label{lem: AB}
\end{lem}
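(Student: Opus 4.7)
The plan is to write $F(x) = A(x)/B(x)$ in lowest terms, normalized so that $B(0)=1$ (possible since $F \in K[[x]]$ forces $B(0) \ne 0$), and to prove that $a := \deg A$ and $b := \deg B$ are each at most $d$. Multiplying the Mahler relation through by $\prod_{j=0}^n B(x^{k^j})$ yields the polynomial identity
\[
\sum_{i=0}^n P_i(x)\, A(x^{k^i}) \prod_{j \ne i} B(x^{k^j}) = 0,
\]
and isolating the $i=n$ term rewrites this as
\[
P_n(x)\, A(x^{k^n}) \prod_{j<n} B(x^{k^j}) = -B(x^{k^n})\, X(x),
\]
where $X(x) := \sum_{i<n} P_i(x)\, A(x^{k^i}) \prod_{j<n,\; j\ne i} B(x^{k^j})$. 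The key input is that $\gcd(A,B)=1$ forces $\gcd(A(x^{k^n}),B(x^{k^n})) = 1$ (any common root $\alpha$ of the latter pair gives a common root $\alpha^{k^n}$ of $A$ and $B$). Using this coprimality, the displayed identity yields both (i) $B(x^{k^n}) \mid P_n(x) \prod_{j<n} B(x^{k^j})$, and (ii) $A(x^{k^n}) \mid X(x)$.

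From (i), comparing degrees gives $k^n b \leq d + b(k^n-1)/(k-1)$, which rearranges to $b(k^{n+1}-2k^n+1) \leq d(k-1)$; since $k^{n+1}-2k^n+1 \geq k-1$ for all $k\geq 2$ and $n\geq 1$, this yields $b \leq d$. To bound $a$ I proceed by induction on $n$, the case $n = 0$ (where $P_0 F = 0$ forces $F \equiv 0$) being trivial. In the inductive step, if $X \not\equiv 0$ then (ii) gives $k^n a \leq \deg X$; bounding each summand of $X$ via $\deg P_i \leq d$ and $k^i \leq k^{n-1}$ yields $\deg X \leq d + k^{n-1}(a-b) + b(k^n-1)/(k-1)$ (under $a \geq b$; if $a < b$ then $a < b \leq d$ already). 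Combining with $b \leq d$ and simplifying gives $a \leq d(k+k^{n-1}-2)/[k^{n-1}(k-1)^2] \leq d$, the last inequality reducing to $k-2 \leq k^n(k-2)$ which holds for all $k\geq 2$ and $n\geq 1$. If instead $X \equiv 0$, then dividing by $\prod_{j<n} B(x^{k^j})$ produces the shorter Mahler relation $\sum_{i=0}^{n-1} P_i(x) F(x^{k^i}) = 0$; taking $m$ to be the largest index $\leq n-1$ with $P_m \ne 0$ (or else $P_0 F = 0$ forces $F \equiv 0$), the induction hypothesis for order $m < n$ closes the argument.

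The main obstacle is the degree arithmetic in the case $X \not\equiv 0$: carefully tracking $\deg X$ through the chain of bounds and verifying the final inequality $k+k^{n-1}-2 \leq k^{n-1}(k-1)^2$. The coprimality observation and the divisibility extractions are conceptually immediate; once the explicit calculation is carried out, the bounds $a \leq d$ and $b \leq d$ together give the lemma.
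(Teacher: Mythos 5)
Your proof is correct and, for the bound $\deg B \le d$, it follows exactly the same route as the paper: clear denominators, extract the divisibility $B(x^{k^n}) \mid P_n(x)\prod_{j<n}B(x^{k^j})$ from $\gcd\bigl(A(x^{k^n}),B(x^{k^n})\bigr)=1$, and compare degrees. Where you genuinely add value is in the bound $\deg A \le d$. The paper dismisses this with the phrase ``a symmetric argument gives the same upper bound,'' but the situation is not literally symmetric in $A$ and $B$: the cleared identity gives $B(x^{k^n})$ dividing a tidy product, whereas $A(x^{k^n})$ only divides the messier sum $X(x)$. You make this honest by proving $A(x^{k^n}) \mid X(x)$, bounding $\deg X \le d + k^{n-1}(a-b) + b(k^n-1)/(k-1)$, and checking that the resulting inequality, combined with $b\le d$, collapses to $a\le d$ via $k-2\le k^n(k-2)$. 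That computation is essentially the content the paper leaves implicit, and it does work out.

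One small redundancy: your induction on $n$ to handle the case $X\equiv 0$ is unnecessary. From the isolated identity $P_n(x)A(x^{k^n})\prod_{j<n}B(x^{k^j}) = -B(x^{k^n})X(x)$, if $X\equiv 0$ then the left-hand side vanishes; since $P_n\neq 0$ and $B\neq 0$, this forces $A\equiv 0$, i.e.\ $F\equiv 0$, which is the trivial case you have already set aside. So for nonzero $F$ one always has $X\neq 0$ and the degree count applies directly, with no need for a descent to shorter Mahler relations. Apart from this harmless extra case, the argument is sound and more careful than the paper's sketch at the point where the paper is vaguest.
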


\begin{proof} Without any loss of generality we can assume that $F(x)$ is not identically zero.  
If $F(x)$ is rational, then there exist two polynomials $A(x)$ and $B(x)$ in $K[x]$ with gcd $1$ and 
with $B(0)=1$ such that $F(x)=A(x)/B(x)$.   Observe that 
$$
\sum_{i=0}^n P_i(x)A(x^{k^i})/B(x^{k^i}) \ = \ 0 \,.
$$
Multiplying both sides of this equation by the product $B(x)B(x^k)\cdots B(x^{k^n})$, 
we see that $B(x^{k^n})$ divides
$$
P_n(x)A(x^{k^n})B(x)\cdots B(x^{k^{n-1}}) \,.
$$
Since $\gcd(A(x),B(x))=1$ and $A(x)$ is nonzero, we actually have that 
$B(x^{k^n})$ divides 
$$
P_n(x)B(x)\cdots B(x^{k^{n-1}}) \,.
$$  
Let $d_0$ denote the degree of $B(x)$.  Then 
we have
\begin{eqnarray*}
k^nd_0 & \le &  {\rm deg}(P_n(x))+\sum_{i=0}^{n-1} {\rm deg}(B(x^{k^i})) \\ \\
&\le & d+d_0(1+k+\cdots +k^{n-1})\\ \\
&=& d + d_0(k^n-1)/(k-1) \,.
\end{eqnarray*}
Thus 
$$
d_0(k^{n+1}-2k^n+1)/(k-1) \le d \,,
$$  
which implies $d_0 \leq d$ since $(k^{n+1}-2k^n+1)/(k-1) \geq 1$ for every integer $k\geq 2$. 
A symmetric argument gives the same upper bound for the degree of $A(x)$. 
\end{proof}

We derive from Lemma \ref{lem: AB} a useful local--global principle for the rationality of Mahler functions with coefficients 
in a finitely generated $\mathbb Z$-algebra. 

\begin{lem} Let $K$ be a field, let $k\geq 2$ be an integer, and let 
$R\subseteq K$ be a finitely generated $\mathbb Z$-algebra.  Let us assume that $F(x)\in R[[x]]$ has the following properties.
\begin{enumerate}
\medskip

\item[{\rm (i)}] There exist a natural number $d$ and polynomials $P_0(x),\ldots ,P_n(x)\in R[x]$ with 
$P_0(x)P_n(x)\neq 0$ such that 
$$
\sum_{i=0}^n P_i(x)F(x^{k^i}) \ = \ 0 \, .
$$

\medskip

\item[{\rm (ii)}] There exists an infinite set $\mathcal{S}$ of maximal ideals of $R$ such that $F(x) \bmod I$  
is a rational power series in $(R/I)[[x]]$ for every $I\in{\mathcal S}$.  

\medskip

\item[{\rm (iii)}] One has $\displaystyle\bigcap_{I\in {\mathcal S}} I = \left\{0\right\}$ .

\end{enumerate}

\medskip

\noindent Then $F(x)$ is a rational function.
\label{lem: rational}
\end{lem}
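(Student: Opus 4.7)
The plan is to prove rationality of $F(x)$ by controlling the rank of its Hankel matrix. Recall Kronecker's criterion: a formal power series $G(x) = \sum_{n\geq 0} g(n)x^n$ over a field is rational if and only if its Hankel matrix $\mathcal{H}(G) := (g(i+j))_{i,j \geq 0}$ has finite rank, and if $G = A/B$ with $\deg A, \deg B \leq d$, then $\mathrm{rank}\,\mathcal{H}(G)$ is bounded by a constant $D = D(d)$ depending only on $d$. Set $d := \max_{0\leq i\leq n} \deg P_i(x)$. The strategy is to show that every $(D+1)\times(D+1)$ minor of $\mathcal{H}(F)$ vanishes in $R$, from which $\mathrm{rank}\,\mathcal{H}(F) \leq D$ over $K$, whence $F(x)$ is rational.

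Before applying Lemma \ref{lem: AB} modulo each $I \in \mathcal{S}$, I first address a small technicality: I need $P_0 \bmod I$ and $P_n \bmod I$ to remain nonzero. Since $R \subseteq K$ is a subring of a field, it is an integral domain. Choose a nonzero coefficient $c_0$ of $P_0(x)$ and a nonzero coefficient $c_n$ of $P_n(x)$, and set $r := c_0 c_n \in R \setminus \{0\}$. Split $\mathcal{S} = \mathcal{S}' \sqcup \mathcal{S}''$ with $\mathcal{S}' := \{I \in \mathcal{S} : r \notin I\}$, and let $J' := \bigcap_{I \in \mathcal{S}'} I$ and $J'' := \bigcap_{I \in \mathcal{S}''} I$. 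Hypothesis (iii) gives $J' \cap J'' = \{0\}$; since $r \in J''$, the ideal $J''$ is nonzero, and in the integral domain $R$ the inclusion $J' \cdot J'' \subseteq J' \cap J'' = \{0\}$ forces $J' = \{0\}$. So $\mathcal{S}'$ still has trivial common intersection (and is, in particular, infinite).

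For every $I \in \mathcal{S}'$, the reduction $F_I := F \bmod I$ belongs to $(R/I)[[x]]$, is rational by hypothesis (ii), and satisfies the reduction modulo $I$ of equation (i), in which $(P_0 \bmod I)(P_n \bmod I) \neq 0$. Applying Lemma \ref{lem: AB} over the field $R/I$ yields $F_I = A_I/B_I$ with $\deg A_I, \deg B_I \leq d$. Consequently $\mathrm{rank}\,\mathcal{H}(F_I) \leq D$, so every $(D+1)\times(D+1)$ minor of $\mathcal{H}(F)$ lies in $I$. Each such minor is a polynomial in finitely many $f(n) \in R$ and therefore lies in $R$; being in every $I \in \mathcal{S}'$, it lies in $J' = \{0\}$, so it vanishes in $R$. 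Hence $\mathrm{rank}\,\mathcal{H}(F) \leq D$ over the fraction field of $R$, and $F(x)$ is rational by Kronecker's criterion applied over $K$.

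The principal subtlety is the need to restrict to the subfamily $\mathcal{S}'$ on which the degree hypothesis of Lemma \ref{lem: AB} remains valid after reduction. This restriction preserves hypothesis (iii) precisely because $R$ inherits integrality from its embedding in $K$; the remainder of the argument marries the uniform degree bound of Lemma \ref{lem: AB} with Kronecker's determinantal characterization of rationality, using (iii) to pass from modular vanishing of minors to integral vanishing.
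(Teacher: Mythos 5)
Your proof is correct and takes essentially the same approach as the paper's: apply Lemma~\ref{lem: AB} modulo (a subfamily of) the ideals in $\mathcal{S}$ to obtain a uniform degree bound, deduce that all sufficiently large minors of a Hankel-type matrix vanish, and conclude rationality from the resulting rank bound. The one genuine addition is your preliminary restriction to $\mathcal{S}'$: the paper's proof applies Lemma~\ref{lem: AB} over $R/I$ without verifying that $P_0$ and $P_n$ remain nonzero after reduction (a point that is guaranteed in the lemma's downstream application via the normalization $P_0(0)=1$, but not by the hypotheses of the lemma as stated), and your argument — discarding the finitely constrained ideals containing $r=c_0c_n$ and using integrality of $R$ together with $J'J''\subseteq J'\cap J''=\{0\}$ to see that (iii) survives the restriction — cleanly repairs this. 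The remaining cosmetic difference is that you invoke Kronecker's criterion on the full Hankel matrix, whereas the paper works with the shifted matrix $(f(d+1+i+j))_{i,j}$ and spells out the resulting linear recurrence directly; both yield the conclusion.
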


\begin{proof}
Let $d$ be a natural number that is strictly greater than the degrees of all polynomials $P_0(x),\ldots ,P_n(x)$.
By (ii), we have that for each maximal ideal $I$ in ${\mathcal S}$, $F(x)\bmod I$ is a rational function.  
Thus by (i) and Lemma \ref{lem: AB}, we see that for each maximal ideal $I$ in $\mathcal{S}$, 
there exist two polynomials $A_I(x)$ and $B_I(x)\in \left(R/I\right)[x]$ of degree 
at most $d$ with $B_I(0)=1$ and 
such that $F(x)\equiv A_I(x)/B_I(x)~\bmod ~I$. 
In particular, if  $F(x) = \sum_{j\geq 0} f(j)x^j$,  
we see that the sequences in the set 
$\left\{ (f(d+1+i+j)~\bmod~I)_{j\ge 0}~\mid~i=0,\ldots ,d\right\}$ are linearly dependent over $R/I$.  
Thus the determinant of each $(d+1)\times (d+1)$ submatrix of the infinite matrix 
\[M := \left(\begin{array}{cccc} f(d+1)& f(d+2) & f(d+3) & \cdots \\
f(d+2) & f(d+3) & f(d+4) & \cdots \\
\vdots & \vdots & \vdots & \cdots \\
 f(2d+1) & f(2d+2) & f(2d+3) & \cdots  
 \end{array}
\right)
\] lies in the maximal ideal $I$.  Since this holds for every maximal ideal $I$ in $\mathcal{S}$, 
we infer from (iii) that every $(d+1)\times (d+1)$ minor of $M$ vanishes.  
It follows that $M$ has rank at most $d$ and thus the rows of $M$ are linearly dependent over the field of fractions of 
$R$.  In particular, there exist $c_0,\ldots ,c_d\in R$, not all zero, such that
$$
\sum_{i=0}^d c_{i} f(d+1+i+j)\ = \ 0
$$ 
for all $j\ge 0$.  Letting $B(x) :=c_d+c_{d-1}x+\cdots +c_0x^d$, we see that 
$B(x)F(x)$ is a polynomial. Hence $F(x)$ is a rational function. This ends the proof. 
\end{proof}

We are now ready to prove the main result of this section. 

\begin{proof}[Proof of Theorem \ref{thm: red1}]  
Let $K$ be a field of characteristic zero and let $F(x)\in K[[x]]$ be a power series that is both $k$- and 
$\ell$-Mahler for some multiplicatively independent natural numbers $k$ and $\ell$.  
By Lemma \ref{lem: reduction1}, there are natural numbers $n$ and $m$ and 
polynomials $P_0(x),\ldots ,P_n(x)$ and $Q_0(x),\ldots ,Q_m(x)$ with 
$P_0(x)P_n(x)Q_0(x)Q_m(x)\neq 0$ and such that
\begin{equation}\label{eq: nonvan}
\sum_{i=0}^n P_i(x)F(x^{k^i}) \ =  \ \sum_{j=0}^m Q_j(x)F(x^{\ell^j}) \ =  \ 0 \, .
\end{equation}
Then by Lemma \ref{lem: KtoR}, there is a finitely generated $\mathbb{Z}$-algebra $R\subseteq K$ such that $F(x)\in R[[x]]$.  
By adding all the coefficients of $P_0(x),\ldots ,P_n(x)$ and of $Q_0(x),\ldots ,Q_m(x)$ to $R$, we can assume that 
$P_i(x)$ and $Q_j(x)$ are in $R[x]$ for $(i,j)\in \{1,\ldots , n\}\times \{1,\ldots ,m\}$.  
By inverting the nonzero integers in $R$, we can assume that $R$ is a finitely generated $\mathbb{Q}$-algebra. 

\medskip

Let $\mathcal{M}\subseteq {\rm Spec}(R)$ denote the collection of maximal ideals of $R$.  Since 
$R$ is a finitely generated $\mathbb{Q}$-algebra, $R$ is a Jacobson ring and $R/I$ is a finite 
extension of $\mathbb{Q}$ for every $I\in \mathcal{M}$ (see \cite[Theorem 4.19, p. 132]{Ei}).
Thus, for each maximal ideal $I$ of $R$, the quotient field $R/I$ is a number field.  
If we assume that the conclusion of Theorem \ref{thm: main} holds when the base field is a number field, 
then we get that $F(x)\bmod I$ is a rational function in $(R/I)[[x]]$ for it is clearly 
both $k$- and $\ell$-Mahler\footnote{Note that since $P_0(0)Q_0(0)\not=0$, 
we may assume that $P_0(0)=Q_0(0)=1$ by multiplying the left-hand side of (\ref{eq: nonvan}) by 
$1/P_0(0)$ and the right-hand side of (\ref{eq: nonvan}) by $1/Q_0(0)$. This ensures that, for each functional equation, not all the 
coefficients vanish when 
reduced modulo a maximal ideal $I$ of $R$. Hence $F(x)\bmod I$ is both $k$- and $\ell$-Mahler.}. 
Since $R$ is a Jacobson ring that is also a domain, we have that 
$\bigcap_{I\in{\mathcal M}}I=\{0\}$ (c.f. \cite[p. 132]{Ei}).   
Then Lemma \ref{lem: rational} implies that $F(x)$ is a rational function in $R[[x]]$.  
This shows it is sufficient to prove Theorem \ref{thm: main} in the case that $K$ is a number field.

\medskip

We can thus assume that $F(x)\in K[[x]]$ where $K$ is a number field. 
Now, if we apply again Lemma \ref{lem: KtoR}, we see that 
there is a finitely generated $\mathbb{Z}$-algebra $R\subseteq K$ such that $F(x)\in R[[x]]$. 
Furthermore, every finitely generated $\mathbb{Z}$-subalgebra of a number field $K$ has a generating set of the form 
$\{a_1/b,\ldots ,a_t/b\}$, where $b$ is a nonzero (rational) integer and $a_1,\ldots ,a_t$ are algebraic integers in $K$.  
Thus $R$ is a subalgebra of a principal localization of a number ring, that is $R\subseteq \left(\mathcal{O}_K\right)_b,$ 
where $\mathcal{O}_K$ denotes the ring of algebraic integers in $K$.  Thus to establish Theorem \ref{thm: main} 
it is sufficient to prove the following result: let $k$ and $\ell$ be two multiplicatively independent natural numbers, 
let $R$ be a principal localization of a number ring, and let $F(x)\in R[[x]]$, then if $F(x)$ is both $k$- and $\ell$-Mahler  
it is a rational function. 
This concludes the proof.  \end{proof}



\section{Further reductions for the form of Mahler equations}\label{sec: form2}

In this section, we refine the results of Section \ref{sec: form1}.  
We show that a power series satisfying a Mahler equation of the form given in (\ref{eq: mahler2}) 
is also solution of a more restricted type of functional equations. 

\begin{lem} 
Let $K$ be a field and $k\ge 2$ be an integer. Let us assume that  
$F(x):=\sum_{i\geq 0}f(i)x^{i}\in K[[x]]$ satisifes a $k$-Mahler equation of the form
$$
\sum_{i=0}^n P_i(x)F(x^{k^i}) \  = \ 0 \, ,
$$   
where $P_0(x),\ldots,P_n(x)\in K[x]$, 
 $\gcd(P_0(x),\ldots,P_n(x))=1$ and $P_0(x)P_n(x)\not=0$. 
Then there exists a natural number $N$ such that, for every integer $a>N$ with $f(a)\not = 0$, 
$F(x)$  can be decomposed as 
$$
F(x) = T_a(x) + x^aF_0(x) \, ,
$$ 
where $T_a(x)\in K[x]$  and $F_0(x)$ has nonzero constant term and 
satisfies a $k$-Mahler equation 
$$
\sum_{i=0}^m Q_i(x)F_0(x^{k^i}) \  = \ 0
$$ 
for some natural number $m$ and polynomials $Q_0,\ldots ,Q_m\in K[x]$ satisfying the following conditions. 

\medskip

\begin{enumerate}

\item[{\rm (i)}] One has $Q_0(0)=1$.

\medskip

\item[{\rm (ii)}]  If $\alpha\neq 0$ and $P_0(\alpha)=0$, then $Q_0(\alpha)=0$.

\medskip

\item[{\rm (iii)}]  If $\alpha\neq 0$, $P_0(\alpha)=0$ and $\alpha^k=\alpha$, then $Q_j(\alpha)\neq 0$ for some $j\in\{1,\ldots,m\}$.
\end{enumerate}
\label{lem: gettingridofzeros}
\end{lem}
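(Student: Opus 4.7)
My plan is to extend the homogenization trick from the proof of Lemma~4.1, keeping careful track of the order of vanishing of the resulting coefficients at $x=0$ and at the nonzero roots of $P_0$.

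Take $N$ larger than $\mathrm{ord}_0(F)$, $\max_i\deg P_i$, and a further threshold depending only on the $P_i$ (emerging in the verification of~(i) below). For $a>N$ with $f(a)\ne 0$, write $T_a(x):=\sum_{i<a}f(i)x^i$ and $F_0(x):=\sum_{i\ge 0}f(a+i)x^i$, so that $F(x)=T_a(x)+x^aF_0(x)$ and $F_0(0)=f(a)\ne 0$. Substituting this decomposition into the given Mahler equation yields
\begin{equation*}
\sum_{i=0}^n P_i(x)\,x^{ak^i}F_0(x^{k^i})=S_a(x),\qquad S_a(x):=-\sum_{i=0}^n P_i(x)T_a(x^{k^i})\in K[x].
\end{equation*}
If $S_a\equiv 0$ this equation is already homogeneous; otherwise, following Lemma~4.1, applying $x\mapsto x^k$ and forming $S_a(x^k)\cdot(\text{original})-S_a(x)\cdot(\text{shifted})$ eliminates $S_a$ and produces $\sum_{i=0}^{n+1}R_i(x)F_0(x^{k^i})=0$, with $R_0=S_a(x^k)P_0(x)x^a$, $R_{n+1}=-S_a(x)P_n(x^k)x^{ak^{n+1}}$, and $R_i=x^{ak^i}[S_a(x^k)P_i(x)-S_a(x)P_{i-1}(x^k)]$ for $1\le i\le n$. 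Dividing through by $D(x):=\gcd(R_0,\ldots,R_{n+1})$ and rescaling by a nonzero constant yields polynomials $Q_0,\ldots,Q_{n+1}$ with $\gcd(Q_i)=1$, $Q_0(0)=1$, and $\sum_iQ_iF_0(x^{k^i})=0$; I set $m:=n+1$.

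To verify~(i), evaluating the original equation at $x=0$ forces $f(0)\sum_iP_i(0)=0$, so $S_a(0)=-f(0)\sum_iP_i(0)=0$ and $t_0:=\mathrm{ord}_0(S_a)\ge 1$. Hence $\mathrm{ord}_0(R_0)=kt_0+\mathrm{ord}_0(P_0)+a$, while the $x^{ak^i}$ factor gives $\mathrm{ord}_0(R_i)\ge ak^i$ for $i\ge 1$; choosing $N$ so that $a(k-1)>kt_0+\mathrm{ord}_0(P_0)$ makes $\mathrm{ord}_0(R_0)$ strictly smaller than $\mathrm{ord}_0(R_i)$ for every $i\ge 1$, so $\mathrm{ord}_0(D)=\mathrm{ord}_0(R_0)$ and $R_0/D$ has nonzero constant term, permitting the rescaling to $Q_0(0)=1$. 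To verify~(ii), fix $\alpha\ne 0$ with $P_0(\alpha)=0$ and set $i^*:=\min\{i\ge 1:P_i(\alpha)\ne 0\}$, which exists because $\gcd(P_0,\ldots,P_n)=1$. A Taylor expansion of the bracket defining $R_{i^*}$ around $\alpha$---using $x^k-\alpha^k=k\alpha^{k-1}(x-\alpha)+O((x-\alpha)^2)$ together with the characteristic-zero hypothesis $k\ne 0$ and $\alpha^{k-1}\ne 0$---shows that the leading coefficient in $(x-\alpha)$ is proportional to $P_{i^*}(\alpha)$, which is nonzero by choice of $i^*$. Therefore $\mathrm{ord}_{x-\alpha}(R_{i^*})<\mathrm{ord}_{x-\alpha}(R_0)=\mathrm{ord}_{x-\alpha}(S_a(x^k))+\mathrm{ord}_{x-\alpha}(P_0)$, so $\mathrm{ord}_{x-\alpha}(D)<\mathrm{ord}_{x-\alpha}(R_0)$ and $Q_0(\alpha)=0$. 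Condition~(iii) follows at once: since $\gcd(Q_0,\ldots,Q_{n+1})=1$, not all $Q_i(\alpha)$ can vanish simultaneously, and (ii) then forces some $Q_j(\alpha)\ne 0$ with $j\ge 1$.

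The main obstacle is the Taylor-expansion step in~(ii), notably ruling out leading-term cancellation in the bracket for $R_{i^*}$, which is especially delicate when $\alpha$ is a fixed point of $x\mapsto x^k$ with $\sum_iP_i(\alpha)=0$---there $S_a(\alpha)=-T_a(\alpha)\sum_iP_i(\alpha)=0$ automatically for every $a$, so one cannot rely on non-vanishing of $S_a(\alpha)$ and must track higher-order Taylor coefficients. The characteristic-zero hypothesis enters crucially, guaranteeing that the factor $k^{\tau}$ arising from the chain-rule expansion of $S_a(x^k)$ does not vanish; an analogous analysis (potentially choosing a different $R_j$ such as $R_{n+1}$) handles non-fixed points $\alpha$, where $\mathrm{ord}_{x-\alpha}(S_a)$ and $\mathrm{ord}_{x-\alpha}(S_a(x^k))$ can differ.
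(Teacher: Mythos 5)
Your plan diverges from the paper's at a crucial normalization step, and the divergence creates genuine gaps in the verification of conditions (i) and (ii).

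The paper first divides the substituted equation by $x^{a+N}$ where $N=\mathrm{ord}_0(P_0)$, producing a polynomial $C_0(x):=x^{-a-N}S_a(x)$ with $C_0(0)\neq 0$ (because the left-hand side $S(x)F_0(x)+\cdots$, with $S(x):=P_0(x)x^{-N}$, has nonzero constant term $S(0)F_0(0)$). Only then does it run the cross-multiplication trick, and only clears the factor $\gcd(C_0(x),C_0(x^k))$, not the full gcd. This construction makes $Q_0(x)=C_0(x^k)S(x)/\gcd(C_0(x),C_0(x^k))$, so $Q_0(0)\neq 0$ is immediate, (ii) is immediate because $S(x)$ divides $Q_0(x)$ and $S(\alpha)=P_0(\alpha)\alpha^{-N}=0$, and (iii) reduces to the observation that when $\alpha^k=\alpha$, the orders of vanishing of $C_0(x)$ and $C_0(x^k)$ at $\alpha$ agree, so $\alpha$ cannot be a root of $C_0(x)/\gcd(C_0(x),C_0(x^k))$.

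You skip this division and cross-multiply immediately, then divide by the full gcd $D$. This has two concrete problems. First, your verification of (i) is built on the claim that you can arrange $\mathrm{ord}_0(R_0)<\mathrm{ord}_0(R_i)$ for all $i\geq 1$ by taking $a$ large. But $t_0=\mathrm{ord}_0(S_a)$ is not a quantity independent of $a$: extracting the coefficient of $x^{a+\mathrm{ord}_0(P_0)}$ in the identity $\sum_i P_i(x)x^{ak^i}F_0(x^{k^i})=S_a(x)$ shows $t_0 = a+\mathrm{ord}_0(P_0)$ for $a$ large. Substituting this into your inequality $a(k-1)>kt_0+\mathrm{ord}_0(P_0)$ gives $-a>(k+1)\mathrm{ord}_0(P_0)$, which is never true. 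Indeed $\mathrm{ord}_0(R_0)$ and $\mathrm{ord}_0(R_1)$ both equal $(k+1)(a+\mathrm{ord}_0(P_0))$ generically; only the weak inequality $\mathrm{ord}_0(R_0)\leq\mathrm{ord}_0(R_i)$ holds, and you would still need to argue there is no cancellation raising the order of $R_1$'s bracket below expectation. Second, and more seriously, your verification of (ii) requires showing $\min_{j\geq 1}\mathrm{ord}_{x-\alpha}(R_j)<\mathrm{ord}_{x-\alpha}(R_0)$ for every nonzero root $\alpha$ of $P_0$, and the Taylor-expansion argument you sketch must rule out leading-term cancellation in the bracket $S_a(x^k)P_{i^*}(x)-S_a(x)P_{i^*-1}(x^k)$. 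You acknowledge this is "the main obstacle" and defer the non-fixed-point case to "an analogous analysis," but that case is not analogous: when $\alpha^k\neq\alpha$ you cannot deduce $P_{i^*-1}(\alpha^k)=0$, and you have no control on the relative orders of $S_a(x)$ and $S_a(x^k)$ at $\alpha$, so the bracket's order is genuinely unresolved. The paper's normalization by $x^{a+N}$ and its decision to clear only $\gcd(C_0(x),C_0(x^k))$ (rather than the full gcd) is precisely what sidesteps this delicate order-of-vanishing bookkeeping; without it, your argument has a real gap.
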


\begin{proof}  
By assumption, we have that $F(x)$ satisfies a $k$-Mahler equation
$$
\sum_{i=0}^n P_i(x)F(x^{k^i}) \ = \ 0 \, ,
$$  
where $P_0(x)P_n(x)$ is nonzero. 
Let $N$ denote the order of vanishing of $P_0(x)$ at $x=0$.  
Suppose that $a\ge N$ and $f(a)\neq 0$. Then we have that 
$$
F(x)=T_a(x)+x^aF_0(x) \, ,
$$
for some polynomial $T_a(x)$ of degree $a-1$ and 
some power series $F_0(x)$ with nonzero constant term.  
Then we have
$$
\sum_{i=0}^n P_i(x) (T_a(x^{k^i})+x^{k^i \cdot a}F_0(x^{k^i})) \ = \ 0 \, ,
$$
which we can write as
\begin{equation} 
\label{eq: PGT} 
\sum_{i=0}^n P_i(x) x^{k^i \cdot a}F_0(x^{k^i}) \ = \ C(x) \, ,
\end{equation} 
where $C(x)$ denotes the polynomial
$$
C(x) :=-\sum_{i=0}^n  P_i(x) T_a(x^{k^i}) \, .
$$
Set $S(x) := P_0(x)x^{-N}$. By definition of $N$,  $S(x)$ is a polynomial with $S(0)\neq 0$.
Then if we divide both sides of Equation (\ref{eq: PGT}) by 
$x^{a+N}$, we obtain that
\begin{equation}
\label{eq: C0}
S(x)F_0(x)+\sum_{i=1}^n P_i(x) x^{k^i a-a-N}F_0(x^{k^i}) \ = \ x^{-a-N}C(x) \, .
\end{equation}
Observe that the left-hand side is a power series with constant term 
$S(0)F_0(0)\neq 0$ and thus 
$C_0(x) := x^{-a-N}C(x)$ is a polynomial with $C_0(0)\neq 0$. 
Applying the operator $x \mapsto x^k$, we also obtain that
\begin{equation}
\label{eq: C02}
S(x^k)F_0(x^k)+\sum_{i=1}^n P_i(x^k) x^{k^{i+1} a-ka-kN}F_0(x^{k^{i+1}}) \ = \ C_0(x^k) \, .
\end{equation}
Multiplying (\ref{eq: C0}) by $C_0(x^k)$ and (\ref{eq: C02}) by $C_0(x)$ and then subtracting, we get that 
\begin{eqnarray*}
&~& C_0(x^k)S(x)F_0(x)+\sum_{i=1}^n C_0(x^k)P_i(x) x^{k^i a-a-N}F_0(x^{k^i}) \\
&~&
- C_0(x)S(x^k)F_0(x^k) - \sum_{i=1}^n C_0(x)P_i(x^k)x^{k^{i+1} a-ka-kN}F_0(x^{k^{i+1}}) \ = \ 0 \, .
\end{eqnarray*}
Since $C_0(0)$ and $S(0)$ are nonzero, we see that $F_0(x)$ satisfies a non-trivial 
$k$-Mahler equation 
$$
\sum_{i=0}^{n+1} Q_i(x)F_0(x^{k^i}) \ = \ 0 \, ,
$$ 
where
$$
Q_0(x):=\frac{C_0(x^k)S(x)}{\gcd(C_0(x),C_0(x^k))} \, 
$$
and 
$$
Q_1(x) :=\frac{C_0(x^k)P_1(x) x^{k^i a-a-N} - C_0(x)S(x^k)}{\gcd(C_0(x),C_0(x^k))} \, ,
$$ 
and, for $i\in \{2,\ldots ,n+1\}$,
$$
Q_i(x) := \frac{x^{k^i a-ka-N}(C_0(x^k)x^{(k-1)a}P_i(x)-C_0(x)P_{i-1}(x^k))}{\gcd(C_0(x),C_0(x^k))} \, ,
$$ 
with the convention that $P_{n+1}(x):=0$. 
By construction, $Q_0(0)\not = 0$, which we may assume to be equal to $1$ by multiplying our equation by $1/Q_0(0)$.   
Since $S(x)$ divides $Q_0(x)$, we have that if $P_0(\alpha)=0$ for some nonzero $\alpha$ then $Q_0(\alpha)=0$.  
Finally, suppose that $P_0(\alpha)=0$ for some nonzero $\alpha$ such that $\alpha^k=\alpha$.   
We claim that $Q_j(\alpha)$ is nonzero for some $j\in\{1,\ldots,n+1\}$.   
Note that since $\gcd(P_0(x),\ldots,P_n(x))=1$, there is some smallest positive integer $i$ such that $P_i(\alpha)$ is nonzero.  
We claim that $Q_i(\alpha)\not = 0$. Indeed, otherwise 
$\alpha$ would be a root of $C_0(x)/\gcd(C_0(x),C_0(x^k))$, but this is impossible since
$\alpha^k=\alpha$.  This ends the proof.
\end{proof}

\begin{cor} Let $K$ be a field and let $k$ and $\ell$ be multiplicatively independent natural numbers. 
Let $F(x) \in K[[x]]$ be a power series that is both $k$- and $\ell$-Mahler and that is not a polynomial.   
Then there is a natural number $a$ such that $F(x)$ can be decomposed as 
$$
F(x)=T_a(x)+x^aF_0(x) \,,
$$ 
where $T_a(x)$ is a polynomial of degree $a-1$, $F_0(x)$ satisfies a $k$-Mahler equation as in Lemma 
\ref{lem: gettingridofzeros}, and $F_0(x)$ also satisfies an $\ell$-Mahler equation 
of the form 
$$
\sum_{i=0}^r R_i(x)F_0(x^{\ell^i}) \  = \ 0
$$
with $R_0(x),\ldots,R_r(x)\in K[x]$ and $R_0(0)=1$.
\label{cor: P01Q01}
\end{cor}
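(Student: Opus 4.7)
\medskip

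\noindent\textbf{Proof proposal for Corollary \ref{cor: P01Q01}.}

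The plan is to apply Lemma~\ref{lem: gettingridofzeros} twice, once to each of the two Mahler equations satisfied by $F(x)$, and then to choose a single cutoff index $a$ that is simultaneously valid for both applications. The key observation making this possible is that the decomposition $F(x) = T_a(x) + x^a F_0(x)$ is \emph{uniquely determined by $a$ and $F(x)$}: $T_a(x)$ is the polynomial part of degree less than $a$ and $x^a F_0(x)$ is the tail, so the same $F_0(x)$ will appear in both applications of the lemma.

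First, by Lemma~\ref{lem: reduction1} applied to the $k$-Mahler property of $F(x)$, we obtain a relation of the form
\[
\sum_{i=0}^n P_i(x) F(x^{k^i}) = 0
\]
with $\gcd(P_0(x),\ldots,P_n(x))=1$ and $P_0(x)P_n(x) \neq 0$. Lemma~\ref{lem: gettingridofzeros} then produces a natural number $N_k$ such that for every $a > N_k$ with $f(a)\neq 0$, the tail $F_0(x)$ satisfies a $k$-Mahler equation with the properties (i)--(iii) of the lemma. Running the identical argument with $\ell$ in place of $k$, we obtain another constant $N_\ell$ with the analogous conclusion for the $\ell$-Mahler side (where only property (i), namely $R_0(0)=1$, is needed).

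Next, because $F(x)$ is assumed not to be a polynomial, the set $\{a\in\mathbb{N} : f(a)\neq 0\}$ is unbounded. I simply pick any $a > \max(N_k, N_\ell)$ with $f(a) \neq 0$ and write $F(x) = T_a(x) + x^a F_0(x)$, where $T_a(x)$ has degree $a-1$ and $F_0(0) = f(a) \neq 0$. By the uniqueness of this decomposition, the power series $F_0(x)$ produced by applying the lemma to the $k$-equation is identical to the one produced by applying it to the $\ell$-equation, so $F_0(x)$ inherits both sets of functional equations simultaneously.

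I do not expect any genuine obstacle here: the corollary is essentially a bookkeeping step that packages two invocations of Lemma~\ref{lem: gettingridofzeros} into a single statement. The only point that requires a sentence of justification is the existence of arbitrarily large indices $a$ with $f(a)\neq 0$, which is exactly the hypothesis that $F(x)$ is not a polynomial.
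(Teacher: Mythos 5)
Your proposal is correct and takes essentially the same approach as the paper, which says only that "the result follows directly by applying Lemma \ref{lem: gettingridofzeros} twice to $F(x)$, viewed respectively as a $k$-Mahler and an $\ell$-Mahler function, and then by choosing $a$ large enough." You have merely spelled out the two implicit points the paper leaves to the reader — the uniqueness of the decomposition $F(x)=T_a(x)+x^aF_0(x)$, so that a single $F_0$ inherits both equations, and the unboundedness of $\{a : f(a)\neq 0\}$ coming from $F$ not being a polynomial.
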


\begin{proof} 
The result follows directly by applying Lemma \ref{lem: gettingridofzeros} twice to $F(x)$, viewed respectively 
as a $k$-Mahler and an $\ell$-Mahler function, and then 
by choosing $a$ large enough.
\end{proof}



\section{Links with automatic and regular power series}\label{sec: auto}

The aim of this section is to underline 
the relation between $k$-Mahler, $k$-regular, and $k$-automatic power series.
We gather some useful facts about automatic and regular power series that will turn out to be useful 
for proving Theorem \ref{thm: main}.  We also observe that every $k$-Mahler power series can 
be decomposed as the product of a $k$-regular power series of a special type and 
the inverse of an infinite product of polynomials. Such a decomposition will play a key role in 
the proof of Theorem \ref{thm: main}.

\subsection{Automatic and regular power series} 

We recall here basic facts about regular power series, which were introduced by Allouche and Shallit \cite{AS2} (see also \cite{AS3} and \cite[Chapter 16]{AS}).  
They form a distinguished class of $k$-Mahler power series as well as a natural generalization of $k$-automatic power 
series. 

\medskip

A useful way to characterize $k$-automatic sequences, due to Eilenberg \cite{Eil}, is given in terms of the so-called $k$-kernel.  

\begin{defn} {\em Let $k\geq 2$ be an integer and let 
${\bf f} = (f(n))_{n\geq 0}$ be a sequence with values in a set $E$.  
The $k$-\emph{kernel} of ${\bf f}$ is defined as the set 
$$
\left\{ (f(k^{a}n+b))_{n\geq 0} \mid a \geq 0, b\in \{0,\ldots,a-1\} \right\} \, .
$$ 
  }
\end{defn}

\begin{thm}[Eilenberg] 
A sequence is $k$-automatic if and only if its $k$-kernel is finite.
\end{thm}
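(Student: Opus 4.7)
The plan is to prove both directions of Eilenberg's characterization by passing back and forth between an automaton and the family of subsequences indexed by its reachable states. I will use least-significant-digit-first reading throughout (the most-significant-digit-first version reduces to the same argument after minor bookkeeping), and I will implicitly correct the range of $b$ in the definition to $b\in\{0,1,\ldots,k^{a}-1\}$, which is surely intended since otherwise the terms $f(k^{a}n+b)$ with $b\geq k^{a}$ would overlap those with smaller exponent.

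For the forward direction, suppose that ${\bf f}$ is $k$-automatic, recognized (in LSDF mode) by a deterministic finite automaton with finite state set $Q$, initial state $q_{0}$, transition function $\delta:Q\times\{0,\ldots,k-1\}\to Q$ extended in the usual way to $\delta^{\ast}:Q\times\{0,\ldots,k-1\}^{\ast}\to Q$, and output map $\tau:Q\to E$ such that $f(n)=\tau(\delta^{\ast}(q_{0},w(n)))$ for $w(n)$ the LSDF base-$k$ word of $n$. For any $a\geq 0$ and $0\leq b<k^{a}$, the LSDF word of $k^{a}n+b$ is the $a$-digit word $w_{a}(b)$ of $b$ (padded with leading zeros up to length $a$) followed by $w(n)$. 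Setting $q_{a,b}:=\delta^{\ast}(q_{0},w_{a}(b))$, I obtain $f(k^{a}n+b)=\tau(\delta^{\ast}(q_{a,b},w(n)))$, so each $k$-kernel sequence is determined by a single state of $Q$. Hence the $k$-kernel has at most $|Q|$ elements.

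For the converse, suppose the $k$-kernel $\mathcal{K}$ of ${\bf f}$ is finite. I will build an automaton whose state set is $\mathcal{K}$ itself. For a kernel element ${\bf g}=(f(k^{a}n+b))_{n\geq 0}$ and a digit $d\in\{0,\ldots,k-1\}$, observe that the sequence $(g(kn+d))_{n\geq 0}=(f(k^{a+1}n+(k\cdot b+d)))_{n\geq 0}$ is again in $\mathcal{K}$, with parameters $(a+1,k b+d)$; this defines $\delta({\bf g},d):=(g(kn+d))_{n\geq 0}$. Take the initial state to be ${\bf f}$ itself (the case $a=b=0$) and the output map $\tau({\bf g}):=g(0)$. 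A straightforward induction on the length of $w(n)$ shows that starting from ${\bf f}$ and reading the LSDF digits of $n$ one lands in the state $(f(k^{a}n'+n))_{n'\geq 0}$ where $a$ is the length read, and taking $n'=0$ yields output $f(n)$ as required. Since $\mathcal{K}$ is finite by hypothesis, this is a finite-state automaton.

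The only real point that needs care is the padding convention: padded representations of the same integer must lead to the same state, which for LSDF reading forces the automaton to loop on digit $0$ at the initial state; equivalently, one must check that, in $\mathcal{K}$, the subsequences $(f(k^{a}n))_{n\geq 0}$ for varying $a$ all agree with ${\bf f}$, which is immediate. Every other verification is bookkeeping, so I expect no significant obstacle.
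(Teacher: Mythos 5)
The paper does not prove this theorem; it is attributed to Eilenberg's book and cited without proof, so there is no in-text argument to compare against. (You are also right that the paper's definition of the $k$-kernel has a typo: the range should be $b\in\{0,\ldots,k^{a}-1\}$.) Your argument is the standard one and its skeleton is sound, but two slips deserve attention. First, the parameter update in the converse direction is miscomputed: with ${\bf g}=(f(k^{a}n+b))_{n\geq 0}$ one has $g(kn+d)=f(k^{a}(kn+d)+b)=f(k^{a+1}n+k^{a}d+b)$, so the new parameters are $(a+1,\,b+k^{a}d)$, not $(a+1,\,kb+d)$. This slip is harmless only because you also state the transition rule directly as $\delta({\bf g},d):=(g(kn+d))_{n\geq 0}$ and the inductive invariant correctly; the advertised ``straightforward induction'' would fail if carried out with your parameter formula. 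Second, your assertion that the subsequences $(f(k^{a}n))_{n\geq 0}$ for varying $a$ ``all agree with ${\bf f}$, which is immediate'' is false in general: take $f(n)=n\bmod 2$, for which $(f(2n))_{n\geq 0}$ is identically zero, unlike ${\bf f}$. Fortunately, this equality of states is not what padding invariance requires. Reading an extra zero from the state $(f(k^{a}n'+n))_{n'\geq 0}$ lands at $(f(k^{a+1}n'+n))_{n'\geq 0}$, a generally different state whose $0$-th term is nonetheless still $f(n)$; the output is therefore padding-invariant even though the states are not, and this weaker invariance is all the construction needs.
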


This characterization gives rise to the following natural generalization of automatic sequences introduced 
by Allouche and Shallit \cite{AS2}. 

\begin{defn} {\em Let $R$ be a commutative ring and let ${\bf f} = (f(n))_{n\geq 0}$ be a $R$-valued sequence. 
Then  ${\bf f}$ is said to be $k$-\emph{regular} 
if the dimension of the $R$-module spanned by its $k$-kernel is finite. }
\end{defn}

In the sequel, we will say that a power series $F(x)\in R[[x]]$ is $k$-\emph{regular} (respectively $k$-\emph{automatic}) 
if its sequence of coefficients is $k$-regular (respectively $k$-automatic).
In the following proposition, we collect some useful general facts about $k$-regular power series. 

\begin{prop}
\label{prop: reg2}
Let $R$ be a commutative ring and $k\geq 2$ be an integer. 
Then the following properties hold.
\begin{itemize}

\medskip

\item[{\rm (i)}] If $F(x)\in R[[x]]$ is $k$-regular and $I$ is an ideal of $R$, then $F(x)\bmod I \in (R/I)[[x]]$ is $k$-regular.

\medskip

\item[{\rm (ii)}] If $F(x)\in R[[x]]$ is $k$-regular, then the coefficients of $F(x)$ take only finitely many distinct values if and only if 
$F(x)$ is $k$-automatic.

\medskip

\item[{\rm (iii)}]  If $F(x)=\sum_{i\geq 0}f(i)x^{i}$ and $G(x)=\sum_{i\geq 0}g(i)x^{i}$ are  
two $k$-regular power series in $R[[x]]$, then the Cauchy product 
$$
F(x) G(x) := \sum_{i= 0}^{\infty} \left (\sum_{j=0}^i {i\choose j} f(j)g(i-j) \right)x^i
$$
is $k$-regular. 
\end{itemize}
 
\end{prop}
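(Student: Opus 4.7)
My plan is to treat the three parts in order, using the section operators $\Lambda_b : R[[x]] \to R[[x]]$ ($0 \leq b < k$), defined by $\Lambda_b(\sum_n h(n) x^n) = \sum_n h(kn+b) x^n$. The $k$-kernel of $H \in R[[x]]$ is exactly $\{\Lambda_{b_m}\cdots\Lambda_{b_1}(H) : m \geq 0,\ 0 \leq b_i < k\}$, and I will make essential use of the decomposition $H(x) = \sum_{b=0}^{k-1} x^b \Lambda_b(H)(x^k)$ together with its corollary $\Lambda_b(H(x) K(x^k)) = \Lambda_b(H)(x) K(x)$.

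For \textbf{(i)}, coefficient reduction modulo $I$ is $R$-linear and commutes with each $\Lambda_b$, so it sends the $k$-kernel of $F$ onto that of $F \bmod I$; if $W$ is the finitely generated $R$-module spanned by the first, its image generates the $(R/I)$-module spanned by the second, which is therefore finitely generated. For \textbf{(ii)}, the ``only if'' direction is immediate. The converse is a standard result of Allouche--Shallit; the plan is to reduce to the case of a Noetherian coefficient ring by passing to the finitely generated $\mathbb{Z}$-subalgebra $R_0 \subseteq R$ generated by $S$ and the structure constants of the $\Lambda_b$-action on generators of $W$, and then to exhibit a finite set of coordinate evaluations on which the $R_0$-module $W_0 \subseteq R_0^{\mathbb{N}}$ spanned by the $k$-kernel injects into a finite product $R_0^s$; the $k$-kernel then embeds into the finite set $S^s$, proving $F$ is $k$-automatic.

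For \textbf{(iii)} I first flag a typographical issue: the notation $F(x) G(x)$ together with the label ``Cauchy product'' denote the ordinary product of formal power series $\sum_i (\sum_{j=0}^i f(j) g(i-j)) x^i$, and the factor $\binom{i}{j}$ appearing in the displayed formula is extraneous. The binomial-convolution reading is in fact false: for $F = G = 1/(1-x)$ (which is even $k$-automatic) it would require $(2^n)_n$ to be $k$-regular for every $k$, but for $k = 3$ the $k$-kernel of $(2^n)_n$ contains geometric sequences with bases $2^{3^a}$ for all $a \geq 0$, which span an infinite-rank $\mathbb{Z}$-module. I therefore prove the statement for the ordinary product. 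Let $W_F, W_G$ denote the finitely generated $R$-modules spanned by the $k$-kernels of $F, G$, and set
$$M \ := \ R[x]_{<k} \cdot W_F \cdot W_G \ \subseteq \ R[[x]],$$
which, as a homomorphic image of $R[x]_{<k} \otimes_R W_F \otimes_R W_G$ under multiplication, is finitely generated and clearly contains $FG$. The plan is to verify $\Lambda_b(M) \subseteq M$ for every $b$, which places the entire $k$-kernel of $FG$ inside $M$ and yields the result.

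To compute $\Lambda_b(p \cdot u \cdot v)$ for a generator $p u v \in M$ one expands $u(x) = \sum_c x^c U_c(x^k)$ and $v(x) = \sum_{c'} x^{c'} V_{c'}(x^k)$ with $U_c, V_{c'}$ in the $k$-kernels of $F, G$; rewrites each product $p(x) x^{c+c'}$ (of degree at most $3k - 3$) in its unique form $\sum_e x^e q_e^{(c,c')}(x^k)$ with $0 \leq e < k$; and applies the identity above to get $\Lambda_b(p u v) = \sum_{c,c'} q_b^{(c,c')}(x) U_c(x) V_{c'}(x)$. The main obstacle is the resulting uniform degree bookkeeping: one must check $\deg q_b^{(c,c')} \leq \lfloor (3k-3)/k \rfloor = 2 < k$ for $k \geq 3$ (and $\deg q \leq 1 < 2 = k$ for $k = 2$), so that the bound $\deg p < k$ defining $M$ is preserved under $\Lambda_b$. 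Once this is verified, $\Lambda_b(M) \subseteq M$ holds for every $b$, and $FG$ is $k$-regular.
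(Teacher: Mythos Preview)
Your proposal is correct. The paper's own proof is simply a citation: (i) ``follows directly from the definition'' and (ii), (iii) ``correspond respectively to Theorem 16.1.5 and Corollary 16.4.2'' in Allouche--Shallit. You instead supply self-contained arguments, so a brief comparison is in order.

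For (i), your argument is exactly the unpacking the paper leaves implicit.

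For (ii), your labels are swapped: in the biconditional ``coefficients take finitely many values $\iff$ $F$ is $k$-automatic'', the \emph{if} direction ($k$-automatic $\Rightarrow$ finitely many values) is the immediate one, while the \emph{only if} direction is the Allouche--Shallit theorem. Your sketch, however, is aimed at the correct (hard) direction, so this is purely terminological. The reduction to a finitely generated $\mathbb{Z}$-subalgebra $R_0$ and the injection of $W_0$ into a finite product $R_0^s$ via evaluation at finitely many indices is the standard argument.

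For (iii), you are right that the displayed formula in the statement contains a typo: the notation $F(x)G(x)$, the label ``Cauchy product'', and the cited Corollary 16.4.2 all refer to the ordinary product $\sum_i(\sum_j f(j)g(i-j))x^i$, and the factor $\binom{i}{j}$ is spurious. Your counterexample $F=G=1/(1-x)$ with $k=3$ correctly rules out the binomial-convolution reading. Your proof via the module $M=R[x]_{<k}\cdot W_F\cdot W_G$ and its stability under the $\Lambda_b$ is the standard route; the degree bookkeeping is sound, since for every $k\ge 2$ one has $\lfloor(3k-3)/k\rfloor\le k-1$ (with equality precisely when $k\in\{2,3\}$), so the constraint $\deg q_b^{(c,c')}<k$ is preserved. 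Compared with the paper, which outsources the work to a reference, your approach has the advantage of being explicit and of flagging the typographical error in the statement.
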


\begin{proof}
The property (i) follows directly from the definition of a $k$-regular sequence, while (ii) and (iii) correspond respectively  
to Theorem 16.1.5 and Corollary 16.4.2 in \cite{AS}. 
\end{proof}

In Section \ref{sec: elim}, we will need to use that $k$-regular sequences 
with complex values do have strict restrictions on the growth of their absolute values, 
a fact evidenced by the following result.

\begin{prop} Let $k\ge 2$ be a natural number and let $F(x)\in\mathbb{C}[[x]]$ be a $k$-regular power series.  
Then $F(x)$ is analytic in the open unit disc and there exist two positive real numbers $C$ and $m$ such that 
$$
\vert F(x) \vert < C (1 - \vert x \vert )^{-m} \, ,
$$
for all $x\in B(0,1)$.  
\label{thm: analyticunitdisc}
\end{prop}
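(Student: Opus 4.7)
The plan is to first show that the coefficients of a $k$-regular power series grow at most polynomially, and then deduce the analytic bound by summing a standard series.

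Write $F(x) = \sum_{n \geq 0} f(n) x^n$, and let $V$ denote the finite-dimensional $\mathbb{C}$-vector space spanned by the $k$-kernel of $(f(n))_{n \geq 0}$. Fix a basis $g_1, \ldots, g_d$ of $V$ consisting of sequences in the $k$-kernel, with $g_1 = f$. For each $j \in \{0, 1, \ldots, k-1\}$ and each $i$, the sequence $(g_i(kn+j))_{n \geq 0}$ lies in the $k$-kernel of $f$, hence in $V$, so there exist matrices $A_0, \ldots, A_{k-1} \in M_d(\mathbb{C})$ such that, setting $v(n) := (g_1(n), \ldots, g_d(n))^T$, one has
\begin{equation*}
v(kn+j) = A_j\, v(n), \quad j = 0, 1, \ldots, k-1, \quad n \geq 0.
\end{equation*}

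Iterating this identity, if $n$ has base-$k$ expansion $n = \sum_{i=0}^{s} a_i k^i$ with $s = \lfloor \log_k n \rfloor$, then $v(n)$ is obtained as a product of at most $s+1$ of the matrices $A_{a_i}$ applied to $v(0)$. Letting $M := \max_j \|A_j\|$ in some submultiplicative norm, we obtain
\begin{equation*}
\|v(n)\| \leq M^{s+1} \|v(0)\| \leq C_0\, n^{\log_k M}
\end{equation*}
for $n \geq 1$. Setting $m' := \lceil \log_k M \rceil$, this yields the polynomial bound $|f(n)| \leq C_1\, n^{m'}$ for some constant $C_1 > 0$ and all $n \geq 1$.

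From this polynomial growth, the radius of convergence of $F(x)$ is at least $1$, so $F$ is analytic in the open unit disc. Moreover, for $x \in B(0,1)$,
\begin{equation*}
|F(x)| \leq |f(0)| + C_1 \sum_{n \geq 1} n^{m'} |x|^n.
\end{equation*}
Using the classical estimate $\sum_{n \geq 0} n^{m'} r^n = O\bigl((1-r)^{-(m'+1)}\bigr)$ as $r \to 1^-$, we conclude that there exist constants $C > 0$ and $m := m'+1$ such that $|F(x)| < C(1 - |x|)^{-m}$ for every $x \in B(0,1)$.

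The only nontrivial step is the extraction of polynomial growth of the coefficients from the $k$-regularity hypothesis; once the matrix representation of the $k$-kernel is in hand, the remainder of the argument is a direct summation. No serious obstacle is expected, as this is essentially a repackaging of the linear-representation structure underlying regular sequences.
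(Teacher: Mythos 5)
Your proof is correct and follows the same strategy as the paper: establish polynomial growth of the coefficients and then sum the resulting series to get the bound $(1-|x|)^{-m}$. The only difference is that the paper simply cites the polynomial coefficient bound from Allouche and Shallit (Theorem 16.3.1 in their book), whereas you rederive it from the linear-representation structure of $k$-regular sequences; that derivation is standard and fine, though you should take $M := \max\bigl(1, \max_j \lVert A_j\rVert\bigr)$ to ensure $\log_k M \ge 0$ so the inequality $M^{s+1} \le M\, n^{\log_k M}$ actually holds.
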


\begin{proof} Let  $\displaystyle F(x)=\sum_{i=0}^{\infty} f(i)x^i\in \mathbb C[[x]]$ be a $k$-regular power series. 
Then there is some positive constant $A$ and some integer $d>0$ such that 
$$
\vert f(i) \vert \le A(i+1)^d \, ,
$$
for every nonnegative integer $i$ (see \cite[Theorem 16.3.1]{AS}).  
This immediately gives that $F(x)$ is analytic in the open unit disc.  
Moreover, for $x\in B(0,1)$,
$$
\vert F(x) \vert \le \sum_{i=0}^{\infty} A(i+1)^d |x|^i \le 
\sum_{i=0}^{\infty} Ad!{i+d\choose d}\vert x\vert^i = 
A d!(1-|x|)^{-d-1} \, .
$$
The result follows. 
\end{proof}


\subsection{Becker power series}

Becker \cite[Theorem 1]{Bec} showed that a $k$-regular power series  
is necessarily $k$-Mahler. In addition to this, he proved \cite[Theorem 2]{Bec} the following 
partial converse. The general converse does not hold.

\begin{thm}[Becker] 
Let $K$ be a field, let $k$ be a natural number $\ge 2$, 
and let $F(x)\in K[[x]]$
be a power series that satisfies a $k$-Mahler equation of the form
\begin{equation} \label{eq: becker} 
F(x)=\sum_{i=1}^n P_i(x)F(x^{k^i})
\end{equation} for some polynomials $P_1(x),\ldots ,P_n(x)\in K[x]$.  
Then $F(x)$ is a $k$-regular power series.
\label{thm: beck}
\end{thm}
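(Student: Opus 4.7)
The plan is to use Eilenberg's $k$-kernel characterization of $k$-regular sequences together with the sectioning operators $\Lambda_b$ already introduced in the proof of Lemma~\ref{lem: reduction1}. Recall two facts I will use repeatedly: for every $G,H\in K[[x]]$ one has $\Lambda_b\bigl(G(x)H(x^k)\bigr)=\Lambda_b(G)(x)\,H(x)$, and for every polynomial $P\in K[x]$ of degree at most $D$, $\Lambda_b(P)$ is a polynomial of degree at most $\lfloor D/k\rfloor$. I will also use that the $k$-kernel power series of $F$ are exactly the iterated sections $\Lambda_{b_{a-1}}\cdots\Lambda_{b_0}(F)$, $a\ge 0$ and $b_i\in\{0,\ldots,k-1\}$, so that $F$ is $k$-regular iff these span a finite-dimensional $K$-subspace of $K[[x]]$.

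Set $d:=\max_{1\le i\le n}\deg P_i(x)$ and introduce the $K$-subspace
$$
V\ :=\ \sum_{i=0}^{n-1} K[x]_{\le d}\cdot F(x^{k^i})\ \subseteq\ K[[x]],
$$
where $K[x]_{\le d}$ denotes polynomials of degree at most $d$. By construction $V$ is spanned over $K$ by the finitely many series $x^{j}F(x^{k^{i}})$ with $0\le j\le d$ and $0\le i\le n-1$, so $\dim_K V\le n(d+1)$, and $F\in V$ trivially (take the coefficient of $F(x)$ equal to $1$).

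The core of the proof is to show that $V$ is stable under every $\Lambda_b$, $0\le b<k$. By linearity one checks this on a generator $Q(x)F(x^{k^i})$ with $\deg Q\le d$. If $i\ge 1$, then $F(x^{k^i})\in K[[x^k]]$ and the identity above gives $\Lambda_b\bigl(Q(x)F(x^{k^i})\bigr)=\Lambda_b(Q)(x)\,F(x^{k^{i-1}})$, which lies in $V$ since $\deg\Lambda_b(Q)\le d/k\le d$. If $i=0$ one first eliminates $F(x)$ by substituting Becker's equation:
$$
Q(x)F(x)\ =\ \sum_{j=1}^n Q(x)P_j(x)\,F(x^{k^j}),
$$
so that
$$
\Lambda_b\bigl(Q(x)F(x)\bigr)\ =\ \sum_{j=1}^n \Lambda_b(QP_j)(x)\,F(x^{k^{j-1}}),
$$
and $\deg\Lambda_b(QP_j)\le 2d/k\le d$ because $k\ge 2$. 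Hence $\Lambda_b(QF(x))\in V$ as well, proving the invariance.

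Since $F\in V$ and $V$ is closed under every $\Lambda_b$, an immediate induction on the length of the word $b_0\cdots b_{a-1}$ shows that every $k$-kernel power series $\Lambda_{b_{a-1}}\cdots\Lambda_{b_0}(F)$ lies in $V$. The $K$-module generated by the $k$-kernel of the coefficient sequence of $F$ is therefore of dimension at most $n(d+1)$, which by definition means $F$ is $k$-regular. The only delicate point I foresee is the degree estimate $\deg\Lambda_b(QP_j)\le 2d/k\le d$; this is exactly where the hypothesis $k\ge 2$ is used, and it explains why the right object to iterate is the space $V$ of \emph{degree-bounded} combinations rather than an arbitrary $K[x]$-module.
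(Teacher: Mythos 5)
The paper does not reproduce a proof of this result: it is cited as Becker's Theorem~2 from \cite{Bec}, so there is no in-paper argument to compare against. Your proof is correct and is essentially the standard (and, as far as I recall, Becker's own) argument. All the ingredients check out: the intertwining identity $\Lambda_b\bigl(G(x)H(x^k)\bigr)=\Lambda_b(G)(x)H(x)$ is exactly the one stated in the proof of Lemma~\ref{lem: reduction1}; the iterated sections $\Lambda_{b_{a-1}}\cdots\Lambda_{b_0}(F)$ do enumerate the power series in the $k$-kernel of the coefficient sequence (the paper's displayed definition of the $k$-kernel has a typo, $b\in\{0,\ldots,a-1\}$ should read $b\in\{0,\ldots,k^a-1\}$, and you have read it correctly); the degree estimates $\deg\Lambda_b(Q)\le\lfloor d/k\rfloor\le d$ and $\deg\Lambda_b(QP_j)\le\lfloor 2d/k\rfloor\le d$ are exactly where $k\ge 2$ enters; and the case $i=0$ is correctly handled by first substituting the Becker equation before applying $\Lambda_b$. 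The space $V$ is spanned over $K$ by the $n(d+1)$ series $x^jF(x^{k^i})$, so its finite dimensionality is immediate, and $\Lambda_b$-stability plus $F\in V$ gives that the whole $k$-kernel lies in $V$, proving $k$-regularity. The one thing worth flagging as a stylistic matter: you should observe explicitly that you may take $d\ge 1$ (if all $P_i$ are constant, Lemma~\ref{lem: constant} in the paper already shows $F$ is constant and hence trivially $k$-regular), though in fact your degree bounds $\lfloor d/k\rfloor\le d$ and $\lfloor 2d/k\rfloor\le d$ are also valid when $d=0$, so no gap actually arises.
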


\begin{defn}{\em
In honour of Becker's result, a power series $F(x)\in K[[x]]$ that satisfies an equation of the form given 
in Equation (\ref{eq: becker}) will be called a $k$-\emph{Becker} power series.}
\end{defn}

Theorem \ref{thm: beck} shows that the set of $k$-Becker power series is contained in the set of $k$-regular 
power series. However, the converse is not true.  As an example, we provide the following result that will also be 
used in Section \ref{sec: main}.

\begin{prop} Let $k$ be a natural number, and let $\omega\in \mathbb{C}$ be a root 
of unity with 
the property that if $j\ge 1$ then $\omega^{k^j}\neq \omega$.  Then
$$
\left(\prod_{j= 0}^{\infty} (1-\omega x^{k^j})\right)^{-1}
$$ 
is $k$-regular but is not $k$-Becker.
\label{prop: omega}
\end{prop}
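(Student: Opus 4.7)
The plan is to verify the two halves of the proposition separately, both departing from the elementary identity $(1-\omega x)F(x) = F(x^k)$, obtained by splitting off the $j=0$ factor of the defining infinite product.

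\emph{Step 1 ($k$-regularity).} For each integer $m \ge 1$ I would introduce the auxiliary series
$G_m(x) := F(x)/(1 - \omega^{k^m} x)$;
the hypothesis $\omega^{k^m} \ne \omega$ guarantees that $\omega - \omega^{k^m} \ne 0$. Using the recurrence $g_m(n) = \omega^{k^m}\, g_m(n-1) + f(n)$ which encodes $G_m(x)(1-\omega^{k^m}x) = F(x)$, a direct coefficient computation shows that, for every $b \in \{0,\ldots,k-1\}$, one has $\Lambda_b(F) = \omega^b G_1$ and $\Lambda_b(G_m) \in \mathrm{Span}_{\mathbb{C}}(G_1, \Lambda_0(G_m))$, where $\Lambda_b$ is the $k$-section operator from Section 7. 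The key identity is
$$
\Lambda_0(G_m) \;=\; \frac{\omega}{\omega - \omega^{k^m}}\, G_1 \;-\; \frac{\omega^{k^m}}{\omega - \omega^{k^m}}\, G_{m+1},
$$
which I would derive by applying the roots-of-unity filter $\Lambda_0(G_m)(x^k) = \tfrac{1}{k}\sum_{\zeta^k=1} G_m(\zeta x)$, using the easy identity $F(\zeta x) = F(x^k)/(1-\omega\zeta x)$ valid for $\zeta^k = 1$ (since $\zeta^{k^j} = 1$ for $j \ge 1$), followed by a partial-fraction decomposition of $1/[(1-\omega\zeta x)(1-\omega^{k^m}\zeta x)]$.

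Now $\omega$ is a root of unity, so the sequence $(\omega^{k^m})_{m\ge 1}$ lies inside a finite group and takes only finitely many distinct values. Hence the family $\{G_m : m \ge 1\}$ is finite, and the $\mathbb{C}$-vector space $V := \mathrm{Span}_{\mathbb{C}}(\{F\} \cup \{G_m : m \ge 1\})$ is finite-dimensional. By the relations above, $V$ contains $F$ and is stable under every $\Lambda_b$, so the entire $k$-kernel of $F$ lies in $V$. This proves that $F$ is $k$-regular.

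\emph{Step 2 ($F$ is not $k$-Becker).} Suppose for contradiction that $F(x) = \sum_{i=1}^n P_i(x) F(x^{k^i})$ for some polynomials $P_1,\ldots,P_n \in \mathbb{C}[x]$. Iterating $(1-\omega x)F(x) = F(x^k)$ gives
$$
F(x^{k^i}) \;=\; Q_{i-1}(x)\, F(x), \qquad Q_{i-1}(x) := \prod_{j=0}^{i-1}\bigl(1 - \omega x^{k^j}\bigr).
$$
Since $F(0) = 1$, the series $F$ is a unit in $\mathbb{C}[[x]]$, so after cancellation we obtain the polynomial identity $\sum_{i=1}^n P_i(x)\, Q_{i-1}(x) = 1$. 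But every $Q_{i-1}$ (for $i \ge 1$) contains the factor $1 - \omega x$, so $Q_{i-1}(\omega^{-1}) = 0$, and evaluating the identity at $x = \omega^{-1}$ yields $0 = 1$, a contradiction.

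The main obstacle is the $k$-regularity half, and in particular the formula for $\Lambda_0(G_m)$: this is where the hypothesis $\omega^{k^m} \ne \omega$ is crucially used, ensuring that $\omega - \omega^{k^m} \ne 0$ so the partial-fraction decomposition is nondegenerate. By contrast, the non-Becker half is almost a one-liner once one has noticed the iterated relation $F(x^{k^i}) = Q_{i-1}(x) F(x)$ and the common zero of all $Q_{i-1}$ at $x = \omega^{-1}$.
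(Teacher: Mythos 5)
Your proof is correct, but the $k$-regularity half takes a genuinely different route from the paper's.

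For $k$-regularity, the paper constructs a polynomial $Q(x)=(1-\beta x)(1-\beta x^k)\cdots(1-\beta x^{k^{N-1}})$ with $\beta=\omega^{k^N}$, shows that $G(x):=Q(x)^{-1}F(x)$ satisfies a Becker-type equation $G(x)=S(x)G(x^k)$, invokes Becker's theorem (Theorem \ref{thm: beck}) to conclude that $G$ is $k$-regular, and then uses closure of $k$-regular series under multiplication by polynomials. You instead compute the $k$-kernel of $F$ directly: you verify $\Lambda_b(F)=\omega^b G_1$ and
$$
\Lambda_b(G_m) \;=\; \frac{\omega^{1+b}}{\omega-\omega^{k^m}}\, G_1 \;-\; \frac{\omega^{k^m(1+b)}}{\omega-\omega^{k^m}}\, G_{m+1},
$$
(the $b=0$ case is the identity you display, and the general $b$ case is what you actually need to close the span; both come from the same roots-of-unity/partial-fractions computation), and then use that $\{\omega^{k^m}\colon m\ge 1\}$ is finite because $\omega$ is a root of unity. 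Thus the $\mathbb{C}$-span of $\{F\}\cup\{G_m\}$ is a finite-dimensional $\Lambda_b$-stable space containing the $k$-kernel of $F$, which is the definition of $k$-regularity. Your argument is more elementary and self-contained — it avoids Becker's theorem as a black box — at the cost of the more involved section computation; the paper's is slicker given the machinery it has already set up. For the non-Becker half the two arguments are essentially identical: the paper divides the putative Becker equation by $F(x^k)$ and observes the left side is $(1-\omega x)^{-1}$ while the right is a polynomial, whereas you divide by $F(x)$ and evaluate the resulting polynomial identity at $x=\omega^{-1}$; both exploit the same telescoping identity $F(x^{k^i})/F(x)=\prod_{j=0}^{i-1}(1-\omega x^{k^j})$.
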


\begin{proof}
Since $\omega$ is a root of unity, the sequence $\omega, \omega^k,\omega^{k^2},\ldots$ is eventually 
periodic and there is some smallest natural number $N$ such that 
$$
\omega^{k^{2N}}= \omega^{k^N} \,.
$$
Set $\beta := \omega^{k^N}$ and let us consider the polynomial
$$
Q(x)=(1-\beta x)(1-\beta x^k)\cdots (1-\beta x^{k^{N-1}})\, .
$$
Then
$$
\frac{Q(x^k)}{Q(x)} = \frac{1-\beta x^{k^N}}{1-\beta x} \, \cdot
$$  
Since
$$
1-\beta x^{k^N} = 1-(\beta x)^{k^N} \, ,
$$ 
we see that $Q(x^k)/Q(x)$ is a polynomial.  

Since 
$$
1-(\beta x)^{k^N} = \frac{Q(x^k)}{Q(x)} \cdot (1-\beta x) \, ,
$$
 we get that  
$(1-\omega x)$ divides the polynomial $Q(x^k)(1-\beta x)/Q(x)$.    
Furthermore, $(1-\omega x)$ cannot divide $(1-\beta x)$ since  by assumption 
$\omega \not= \beta$. By Euclid's lemma, we thus obtain that 
$$
\frac{Q(x^k)}{Q(x)} = (1-\omega x)S(x) 
$$ 
for some polynomial $S(x)$.

Set $G(x) :=Q(x)^{-1}F(x)$.  
Since $F(x)$ satisfies the $k$-Mahler recurrence
$$
F(x^k)=(1-\omega x)F(x) \, ,
$$ 
we see that
$$
G(x^k) =Q(x^k)^{-1}(1-\omega x)Q(x) G(x),
$$ 
or equivalently,
$$
G(x) = S(x)G(x^k) \, .
$$
Thus $G(x)$ is a $k$-Becker power series. By Proposition \ref{prop: reg2}, $F(x)$ is $k$-regular as it is a product of a 
polynomial (which is $k$-regular) and a $k$-regular power series.

\medskip

On the other hand, $F(x)$ cannot be a $k$-Becker power series.  To see this, suppose that $F(x)$ satisfies an equation 
of the form
$$
F(x)=\sum_{i=1}^d P_i(x)F(x^{k^i}) \,.
$$  
Now, dividing both sides by $F(x^k)$, the right-hand side becomes a polynomial in $x$, while the left-hand side is 
$(1-\omega x)^{-1}$, a contradiction.  The result follows. 
\end{proof}

In Section \ref{sec: elim}, we will need the following basic result about $k$-Becker power series.

\begin{lem}
\label{lem: constant} 
Let $k\ge 2$ and let us assume that $F(x)\in K[[x]]$  satisfies a $k$-Mahler equation of the form
$$
F(x)=\sum_{i=1}^n a_i F(x^{k^i})
$$ 
for some constants $a_1,\ldots ,a_n\in K$.  Then $F(x)$ is constant.
\end{lem}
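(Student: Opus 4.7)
\medskip

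The plan is to extract coefficients directly from the functional equation and show by induction on the $k$-adic valuation that every non-constant Taylor coefficient of $F(x)$ vanishes. Writing $F(x)=\sum_{m\ge 0} f(m)x^m$, the identity
$$
F(x)=\sum_{i=1}^n a_i F(x^{k^i})
$$
gives, by comparing the coefficient of $x^m$ on both sides, the recurrence
$$
f(m)=\sum_{\substack{1\le i\le n\\ k^i\mid m}} a_i\, f(m/k^i).
$$
For $m=0$ this is the trivial relation $f(0)=(a_1+\cdots+a_n)f(0)$, which places no constraint on $f(0)$.

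Next I would treat the base case $k\nmid m$ with $m\ge 1$: here the sum on the right is empty since no $i\ge 1$ satisfies $k^i\mid m$, so $f(m)=0$. Then I would induct on the $k$-adic valuation $\nu_k(m)$. For $m\ge 1$, write $m=k^a b$ with $\gcd(b,k)=1$; the recurrence becomes
$$
f(k^a b)=\sum_{i=1}^{\min(a,n)} a_i\, f(k^{a-i}b),
$$
and every term on the right has $k$-adic valuation strictly less than $a$, hence vanishes by the inductive hypothesis. Thus $f(m)=0$ for all $m\ge 1$, and $F(x)=f(0)$ is constant.

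There is essentially no obstacle: the whole argument is a one-step coefficient comparison followed by a trivial induction, made possible by the fact that the coefficients $a_i$ are constants (so the equation does not mix Taylor coefficients of different $k$-adic valuations except in the trivializing downward direction). The only thing to be careful about is the base case and the fact that the $m=0$ relation genuinely leaves $f(0)$ free, which is exactly why the conclusion is that $F$ is constant rather than zero.
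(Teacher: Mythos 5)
Your proof is correct and is essentially the same coefficient-comparison argument as the paper's: the paper phrases it as a minimal-counterexample (take the smallest positive $i$ with $f(i)\neq 0$ and extract the coefficient of $x^i$ to reach a contradiction), whereas you organize it as an explicit induction on $\nu_k(m)$, but the underlying mechanism — that constant coefficients $a_i$ force $f(m)$ to depend only on $f$ at strictly smaller positive indices, which must vanish — is identical.
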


\begin{proof}
Let us denote by 
$F(x)=\sum_{i\geq 0} f(i) x^{i}$
the power series expansion of $F(x)$.  If $F(x)$ were non-constant, there would be some smallest positive integer $i$ 
such that $f(i)\neq 0$. Thus $F(x) = \lambda + x^{i}F_0(x)$ for some $\lambda$ in $K$ and some $F_0(x)\in K[[x]]$. 
But taking the coefficient of $x^i$ in the right-hand side of the equation
$$
F(x)=\sum_{i=1}^n a_i F(x^{k^i}) \, ,
$$
we see that $f(i)=0$, a contradiction.  
The result follows.
\end{proof}

Though there are some Mahler functions that are not Becker functions, the following result shows that every $k$-Mahler power series can 
be decomposed as the product of a $k$-Becker power series and the inverse of an infinite product of polynomials. 
This decomposition will turn out to be very useful to prove Theorem \ref{thm: main}.  
We note that a similar result also appears as Theorem 31 in the 
Ph.\@ D.\@ Thesis of Dumas \cite{Dum}. 

\begin{prop} \label{rem: decomp}
Let $k$ be a natural number, let $K$ be a field, and let 
$F(x)\in K [[x]]$ be a $k$-Mahler power series satisfying 
an equation of the form 
$$
\sum_{i=0}^n P_i(x)F(x^{k^{i}}) = 0 \, ,
$$
where $P_0(x),\ldots, P_n(x)\in K[x]$ and  $P_0(0)=1$. 
Then there is a $k$-Becker power series $G(x)$ such that
$$
F(x) \ =  \ \left(\prod_{i= 0}^{\infty} P_0(x^{k^i})\right)^{-1}G(x) \, .
$$
\end{prop}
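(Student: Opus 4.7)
The plan is to take the candidate decomposition seriously and compute: define the formal infinite product
$$
\Pi(x) \ := \ \prod_{i=0}^{\infty} P_0(x^{k^i})
$$
and set $G(x) := \Pi(x) F(x)$. Since $P_0(0)=1$, each factor $P_0(x^{k^i})$ has constant term $1$; moreover, for $i \geq 1$, $P_0(x^{k^i}) - 1$ has order at $x=0$ at least $k^i$, so the product converges in the $(x)$-adic topology on $K[[x]]$ and defines an element of $K[[x]]$ with constant term $1$. In particular, $\Pi(x)$ is invertible in $K[[x]]$, so the identity $F(x) = \Pi(x)^{-1} G(x)$ makes sense and is exactly the decomposition we want, provided we can show that $G(x)$ is $k$-Becker.

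First I would record the key self-similarity relation satisfied by $\Pi(x)$, namely
$$
\Pi(x) \ = \ P_0(x)\, \Pi(x^k) \, ,
$$
obtained by pulling out the $i=0$ factor. Iterating gives, for every $i \geq 0$,
$$
\frac{\Pi(x)}{\Pi(x^{k^i})} \ = \ \prod_{j=0}^{i-1} P_0(x^{k^j}) \, ,
$$
where the empty product (when $i=0$) equals $1$. Note that the right-hand side is a polynomial in $x$.

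Next I would substitute $F(x^{k^i}) = \Pi(x^{k^i})^{-1} G(x^{k^i})$ into the Mahler equation $\sum_{i=0}^n P_i(x) F(x^{k^i}) = 0$ and multiply through by $\Pi(x)$. Using the formula above, this yields
$$
P_0(x) G(x) \ + \ \sum_{i=1}^n P_i(x) \left( \prod_{j=0}^{i-1} P_0(x^{k^j}) \right) G(x^{k^i}) \ = \ 0 \, .
$$
For every $i \geq 1$, the polynomial $\prod_{j=0}^{i-1} P_0(x^{k^j})$ contains the factor $P_0(x)$, so the whole left-hand side is divisible by $P_0(x)$ as an element of $K[x]$. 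Dividing through by $P_0(x)$ (valid since $P_0(x) \neq 0$ and we are working in $K(x)$, then observing that the resulting coefficients are genuinely polynomials) produces
$$
G(x) \ = \ -\sum_{i=1}^n P_i(x) \left( \prod_{j=1}^{i-1} P_0(x^{k^j}) \right) G(x^{k^i}) \, ,
$$
which is exactly a $k$-Becker equation in the sense of \eqref{eq: becker}. This completes the proof.

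There is essentially no obstacle here: the argument is a direct formal manipulation, and the only subtle point is checking that the infinite product $\Pi(x)$ is a well-defined invertible element of $K[[x]]$, which is guaranteed by the hypothesis $P_0(0)=1$. The cleanest presentation is to introduce $\Pi$ first, verify the functional equation $\Pi(x) = P_0(x)\Pi(x^k)$, and then let this identity do all the work when we substitute into the original Mahler equation.
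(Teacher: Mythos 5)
Your proof is correct and follows essentially the same route as the paper: set $G(x) = \Pi(x)F(x)$ with $\Pi(x) = \prod_{i\geq 0} P_0(x^{k^i})$, use the self-similarity $\Pi(x) = P_0(x)\Pi(x^k)$, substitute into the Mahler equation, and divide by $P_0(x)$ to exhibit the Becker equation for $G$. The only difference is a small reorganization of the bookkeeping (the paper divides by $P_0(x)H(x)^{-1}$ rather than first multiplying through by $\Pi(x)$ and then dividing by $P_0(x)$), which produces the same identity.
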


\begin{proof}
Since $P_0(0)=1$, the infinite product
$$
H(x) :=\prod_{i=0}^{\infty} P_0(x^{k^i}) 
$$ 
converges to an invertible element of $K[[x]]$. 
By definition, $H(x)$ satisfies the following  equation: 
$$ 
H(x)=P_0(x)H(x^k)
$$ 
and hence $H(x)$ is a $k$-Becker power series.
Now, set $G(x):=H(x)F(x)$. Then our assumption on $F(x)$ implies that  
$$
\sum_{i=0}^n P_i(x)H(x^{k^i})^{-1}G(x^{k^i}) \ = \ 0 \, .
$$  
Dividing both sides by $P_0(x)H(x)^{-1}$, we obtain that 
$$
G(x) = - \sum_{i=1}^n P_i(x)\left(\prod_{j=1}^{i-1}P_0(x^{k^i})\right)  G(x^{k^i})\,.
$$ 
This shows that $G$ is a $k$-Becker power series. Hence $F(x)$ can be written as 
$$
F(x) \ =  \ \left(\prod_{i= 0}^{\infty} P_0(x^{k^i})\right)^{-1}G(x)\, ,
$$
where $G(x)$ is a $k$-Becker power series. 
This ends the proof. 
\end{proof}



\section{Conditions on $k$ and $\ell$}\label{sec: kl}

In this section, $K$ will denote an arbitrary field. We consider power series in $K[[x]]$ 
that are both $k$- and $\ell$-Mahler with respect to two multiplicatively independent natural numbers 
$k$ and $\ell$.  More specifically, we look at the set of natural numbers $m$ 
for which such a power series is necessarily $m$-Mahler.

\begin{prop} 
Let $k$ and $\ell$ be two multiplicatively independent natural numbers and 
let $F(x)\in K[[x]]$ be a power series that is both $k$- and $\ell$-Mahler.   
Let us assume that $a$ and $b$ are integers with the property that 
$m:=k^a\ell^b$ is an integer greater than $1$.  
Then $F(x)$ is also $m$-Mahler.
\label{prop: k'l'}
\end{prop}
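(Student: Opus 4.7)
My plan is to prove the proposition by introducing a common finite-dimensional $K(x)$-space containing all relevant iterated substitutions of $F$, and then descending any linear dependence back to a Mahler equation over $K(x)$. First, the key preliminary is to show that the $K(x)$-subspace
$$V := \operatorname{span}_{K(x)}\{F(x^{k^i \ell^j}) : i, j \geq 0\} + K(x) \cdot 1 \subseteq K((x))$$
is finite-dimensional. By Lemma~\ref{lem: reduction1}, I may assume $F$ satisfies reduced equations $\sum_{i=0}^n P_i(x) F(x^{k^i}) = 0$ and $\sum_{j=0}^q Q_j(x) F(x^{\ell^j}) = 0$ with $P_0 P_n Q_0 Q_q \neq 0$. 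Substituting $x \mapsto x^{\ell^j}$ into the first relation and $x \mapsto x^{k^i}$ into the second, a routine recursion gives
$$V = \sum_{0 \leq i < n,\ 0 \leq j < q} K(x) \cdot F(x^{k^i \ell^j}) + K(x),$$
so $\dim_{K(x)} V \leq nq + 1$.

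When $a, b \geq 0$, every $F(x^{m^s}) = F(x^{k^{as} \ell^{bs}})$ lies in $V$, so the family $\{1, F(x), F(x^m), F(x^{m^2}), \ldots\}$ admits a nontrivial $K(x)$-linear relation, which is precisely an $m$-Mahler equation. The case $a, b \leq 0$ (not both zero) forces $m \leq 1$ and is excluded. So only the mixed case remains; by symmetry I may assume $a > 0$ and $b < 0$, writing $b = -b'$ with $b' > 0$.

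For the mixed case, the strategy is a ramified change of variable. I fix an integer $N$ with $N + 2 > \dim_{K(x)} V$, set $c := \ell^{b' N}$, and adjoin a formal variable $y$ with $y^c = x$, working in $K[[y]] \supseteq K[[x]]$. For each $0 \leq s \leq N$ one has $c \cdot m^s = k^{as} \ell^{b'(N-s)}$, both exponents nonnegative, so
$$F(x^{m^s}) = F(y^{k^{as} \ell^{b'(N-s)}}) \in V',$$
where $V' \subseteq K((y))$ is the analog of $V$ with $x$ replaced by $y$. The argument of the first step applies verbatim to $V'$ and yields $\dim_{K(y)} V' \leq nq + 1$, so the $N + 2$ elements $1, F(x), F(x^m), \ldots, F(x^{m^N})$ satisfy a nontrivial $K(y)$-linear relation
$$c_0(y) + \sum_{s=0}^{N} c_s(y) F(x^{m^s}) = 0.$$

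The hard part is then to descend this $K(y)$-relation to a genuine $m$-Mahler equation over $K(x)$. By Eisenstein's criterion applied to the prime $(x) \subseteq K[x]$, the polynomial $T^c - x$ is irreducible, so $K(y) = K(x)[y]$ is a degree-$c$ extension with $K(x)$-basis $\{1, y, \ldots, y^{c-1}\}$. Expanding each $c_s(y) = \sum_{r=0}^{c-1} y^r c_{s,r}(x)$ with $c_{s,r}(x) \in K(x)$ and regrouping yields
$$\sum_{r=0}^{c-1} y^r \left( c_{0,r}(x) + \sum_{s=0}^{N} c_{s,r}(x) F(x^{m^s}) \right) = 0.$$
Each parenthesized expression lies in $K((x)) = K((y^c))$, so the $c$ summands occupy disjoint residue classes of the $y$-adic valuation modulo $c$ and must vanish individually. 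Since at least one of the $c_s(y)$ is nonzero, at least one of the resulting identities is a nontrivial $K(x)$-linear relation among $\{1, F(x), F(x^m), \ldots, F(x^{m^N})\}$, which exhibits $F$ as $m$-Mahler and completes the proof.
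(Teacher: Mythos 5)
Your proof is correct, and in the mixed-sign case it takes a genuinely different route from the paper. Both arguments begin identically: introduce the $K(x)$-span $V$ of $\{F(x^{k^i\ell^j})\}_{i,j\ge 0}$, use the two reduced Mahler equations (Lemma~\ref{lem: reduction1}) to bound $\dim_{K(x)}V$, and dispose of the case $a,b\ge 0$ immediately. Where the two proofs diverge is the case $a>0$, $b<0$. The paper stays inside $K((x))$: for each $j$ it observes $m^j\ell^i = k^{aj}\ell^{bj+i}$ lies in $V$ once $i\ge -bj$, and then runs a \emph{descending} induction on $i$, using the $\ell$-Mahler equation (with $P_0\ne 0$) to pull $F(x^{m^j\ell^{i_0-1}})$ into $V$ from the higher terms, until $i_0=0$. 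You instead pass to a ramified cover $y^c=x$ with $c=\ell^{b'N}$, so that $c\cdot m^s=k^{as}\ell^{b'(N-s)}$ has both exponents nonnegative for $0\le s\le N$; this places $F(x^{m^s})$ in the $y$-analogue $V'$ and forces a nontrivial $K(y)$-dependence among $1,F(x),\dots,F(x^{m^N})$. The extra work is the descent from $K(y)$ to $K(x)$, which you carry out correctly via Eisenstein at $(x)$ (so $[K(y):K(x)]=c$ with basis $1,y,\dots,y^{c-1}$) and the observation that the $c$ residue classes of $y$-valuation modulo $c$ are disjoint, so each homogeneous component of the relation vanishes separately. Your approach is slightly more machinery-heavy (a field extension and a descent argument, versus the paper's self-contained induction inside $V$), but it has the conceptual virtue of literally reducing the mixed case to the nonnegative case, and your dimension bound $nq+1$ is sharper than the paper's $N^2$. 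One minor cosmetic point: the two occurrences of $c_0$ in your displayed $K(y)$-relation (one the constant term, one the coefficient of $F(x)$) should be given distinct names, but this does not affect the argument.
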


\begin{proof} 
Let $V$ denote the $K(x)$-vector space spanned by all the power series 
that belong to the set $\left\{F(x^{k^a\ell^b}) \mid a,b \in\mathbb N\right\}$.  
By assumption, there exists some natural number $N$ such that 
$F(x^{k^n})\in \sum_{i=0}^{N-1} K(x)F(x^{k^i})$ and
$F(x^{\ell^n})\in \sum_{i=0}^{N-1} K(x)F(x^{\ell^i})$ for every integer $n\ge N$.  
Thus $V$ is a $K(x)$-vector space of dimension at most $N^2$.  

\medskip

Suppose that $a$ and $b$ are integers such that 
$m:=k^a\ell^b$ is an integer greater than $1$.  If $a$ and $b$ are nonnegative, then $F(x^{m^j})\in V$ for every integer 
$j\ge 0$ and since the dimension of $V$ is a finite, we see that $F(x)$ is $m$-Mahler.  
Thus we may assume that at least one of $a$ or $b$ is negative.  
Since $m\geq 1$, at least one of $a$ or $b$ must also be positive.  
Without loss of generality, we may thus assume that $a>0$ and $b<0$.  

\medskip

We are now going to show that $F(x^{m^j})\in V$ for every nonnegative integer $j$.   
To see this, we fix a nonnegative integer $j$.  Then we observe that $m^j\ell^{-bj}=k^{ja}$ and thus 
$F(x^{m^jl^{i}})$ belongs to $V$ for every integer $i\geq -bj$. 
Since $-bj\geq 0$, there exists a smallest nonnegative integer $i_0$ 
such that  $F(x^{m^j \ell^i})\in V$ for every integer $n\ge i_0$.  
If $i_0$ is zero, then we are done.  
We assume that $i_0$ is positive and look for a contradiction. By definition of $i_0$, we note that   
 $F(x^{m^j \ell^{i_0-1}})\not\in V$.  By assumption, $F(x)$ satisfies a $\ell$-Mahler equation of the form
$$
\sum_{i=0}^N P_i(x)F(x^{\ell^i}) \ = \ 0 \,,
$$ 
with $P_0(x),\ldots ,P_N(x)\in K[x]$ and $P_0(x)
\neq 0$.  Applying the operator $x \mapsto x^{m^j\ell^{i_0-1}}$, we get that
$$
P_0(x^{m^j\ell^{i_0-1}})F(x^{m^j\ell^{i_0-1}}) = -\sum_{i=1}^N P_i(x^{m^j\ell^{i_0-1}})F(x^{m^j\ell^{i_0-1+i}}) \, .
$$  
By definition of $i_0$, the right-hand side of this equation is in $V$, and so $F(x^{m^j\ell^{i_0-1}})\in V$ since $P_0(x)$ is nonzero. 
This is a contradiction.  It follows that 
$F(x^{m^j})\in V$ for every nonnegative integer $j$. 

\medskip

Since $V$ is a  $K(x)$-vector space of dimension at most $N^2$, we see that 
$F(x),F(x^{m}),\ldots ,F(x^{m^{N^2}})$ are linearly dependent over $K(x)$, which implies that $F(x)$ is $m$-Mahler. 
This ends the proof. 
\end{proof}

\begin{cor} 
Let $k$ and $\ell$ be two multiplicatively independent natural numbers and 
let $F(x)\in K[[x]]$ be a power series that is both $k$- and $\ell$-Mahler.     
Then there exist two multiplicatively independent positive integers $k'$ and $\ell'$ 
such that the following conditions hold.

\medskip

\begin{enumerate}
\item[{\rm (i)}] There is a prime number $p$ that divides $k'$ and does not divide $\ell'$.

\medskip

\item[{\rm (ii)}] There is a prime number $q$ that divides $\ell'$ and does not divide $k'$.

\medskip

\item[{\rm (iii)}] $F(x)$ is both $k'$- and $\ell'$-Mahler.

\end{enumerate}
\label{cor: pqkl}
\end{cor}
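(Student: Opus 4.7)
My plan is to reduce the statement, via Proposition~\ref{prop: k'l'}, to the purely arithmetic problem of producing positive integers $k',\ell'>1$ of the form $k^{a}\ell^{b}$ with $a,b\in\mathbb{Z}$ that are multiplicatively independent and whose prime supports witness~(i) and~(ii). Once such integers are found, the proposition automatically delivers~(iii).

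The construction will rest on the following elementary observation. For each prime $r$ dividing $k\ell$, set $\rho_r := v_r(k)/v_r(\ell)\in[0,+\infty]$, with the convention $\rho_r=+\infty$ when $v_r(\ell)=0$. Multiplicative independence of $k$ and $\ell$ is equivalent to the $\rho_r$ not all being equal: a common finite positive rational value $s/t$ in lowest terms would force $v_r(k)=s m_r$ and $v_r(\ell)=t m_r$ for some $m_r\in\mathbb{Z}_{\ge 0}$, hence $k=n^s$ and $\ell=n^t$ with $n:=\prod r^{m_r}$ and thus $k^t=\ell^s$; the boundary values $0$ and $+\infty$ are ruled out since they would force $k=1$ or $\ell=1$. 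Select primes $p,q$ achieving the maximum and minimum of the $\rho_r$'s respectively; then $\rho_p>\rho_q$, and the boundary analysis forces $v_p(k)>0$ and $v_q(\ell)>0$.

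I then define
$$
k' := k^{v_q(\ell)}\ell^{-v_q(k)}, \qquad \ell' := \ell^{v_p(k)}k^{-v_p(\ell)}.
$$
The formula is uniformly valid: when $v_q(k)=0$ or $v_p(\ell)=0$ the apparent denominator simply disappears. A direct calculation gives
$$
v_r(k') = v_q(\ell)\,v_r(k)-v_q(k)\,v_r(\ell),
$$
which is $\ge 0$ at every prime $r$ because $\rho_r\ge\rho_q$ (and $v_q(\ell)>0$); symmetrically $v_r(\ell')\ge 0$ using $\rho_r\le\rho_p$. Hence $k',\ell'$ are positive integers. By construction $v_q(k')=0$ while $v_p(k')>0$ (since $\rho_p>\rho_q$), and symmetrically $v_p(\ell')=0$ while $v_q(\ell')>0$, so~(i) and~(ii) hold and each of $k',\ell'$ exceeds~$1$. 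Finally, the $2\times 2$ exponent matrix has determinant $v_p(k)v_q(\ell)-v_p(\ell)v_q(k)>0$, so multiplicative independence descends from $(k,\ell)$ to $(k',\ell')$; Proposition~\ref{prop: k'l'} then yields~(iii).

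I do not foresee a real obstacle in this argument: once the reduction to extremal slopes is in place, every verification is a one-line inequality on valuations. The only point requiring a little care is the preliminary equivalence between multiplicative independence and non-constancy of the $\rho_r$, but the short computation above handles it cleanly.
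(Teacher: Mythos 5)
Your argument is correct and is essentially the same as the paper's: both reduce to Proposition~\ref{prop: k'l'} and construct $k',\ell'\in k^{\mathbb Z}\ell^{\mathbb Z}\cap\mathbb{Z}_{>1}$ by selecting primes at which the slope $v_r(k)/v_r(\ell)$ is extremal, so that the resulting monomials in $k,\ell$ have nonnegative valuation at every prime. The only cosmetic difference is that you write $\ell'$ directly as $\ell^{v_p(k)}k^{-v_p(\ell)}$, whereas the paper constructs $\ell'$ as a monomial in $\ell$ and the already-built $k'$; the two differ only by the $v_q(\ell)$-th power, so they are multiplicatively equivalent and the proof goes through identically.
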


\begin{proof} There exist prime numbers $p_1,\ldots ,p_m$ and nonnegative integers 
$a_1,\ldots ,a_m,b_1,\ldots ,b_m$ such that 
$$
k=\prod_{i=1}^m p_i^{a_i} \;\mbox{ and }\; \ell=\prod_{i=1}^m p_i^{b_i} \, .
$$
Moreover, we can assume that, for each $i$, at least one of $a_i$ or $b_i$ is positive.

Note that if there are $i$ and $j$ such that $a_i=0$ and $b_j=0$, then we can take $k':=k$ and $\ell':=\ell$ and set 
$p:=p_j$ and $q:=p_i$ to obtain the desired result.  
Thus we can assume without loss of generality that $b_i>0$ for $i\in \{1,\ldots ,m\}$.  
Then there is some $i_0\in \{1,\ldots ,m\}$ such that $a_{i_0}/b_{i_0} \le a_j/b_j$ for all $j\in \{1,\ldots ,m\}$.
In particular, $c_j := a_jb_{i_0}-b_ja_{i_0}$ is a nonnegative integer for all $j\in \{1,\ldots ,m\}$. 
Hence 
$$
k':=k^{b_{i_0} }\ell^{-a_{i_0} } = \prod_{j=1}^m p_j^{c_j}\in \mathbb{N} \, .
$$
Furthermore, $p_{i_0}$ does not divide $k'$ and since $k$ and $\ell$ are 
multiplicatively independent, the $c_i$'s are not all equal to zero. 

Now we pick $i_1\in \{1,\ldots ,m\}$ such that $c_{i_1}/b_{i_1}\ge c_j/b_j$ for all $j\in \{1,\ldots ,m\}$.  
Note that $c_{i_1}>0$ since the $c_i$'s are not all equal to zero.  
Set 
$$
\ell':=\ell^{c_{i_1}} (k')^{-b_{i_1}} = \prod_{j=1}^m p_j^{b_jc_{i_1}-b_{i_1}c_j} \in \mathbb N \, .
$$  
Since $c_{i_0}=0$, $c_{i_1}>0$ and the $b_i$'s are positive, we get that $p_{i_0}$ divides $\ell'$.  
Moreover, $p_{i_1}$ does not divide $\ell'$ while $p_{i_1}$ divides $k'$ for $c_{i_1}$ is positive.   
In particular, $k'$ and $\ell'$ are multiplicatively independent.   
 Furthermore, Proposition \ref{prop: k'l'} implies that $F(x)$ is both $k'$- and $\ell'$-Mahler. 
 Setting $q:=p_{i_0}$ and $p=p_{i_1}$,  we obtain that $k'$ and $\ell'$ have all the desired properties. 
 This concludes the proof.
\end{proof}



\section{Elimination of singularities at roots of unity}\label{sec: elim}

In this section we look at the singularities of $k$-Mahler functions at roots of unity.  
Strictly speaking, we do not necessarily eliminate singularities, 
and so the section title is perhaps misleading.  We do, however, show that one can 
reduce to the case of considering Mahler equations whose singularities at roots of 
unity have a restricted form.

\begin{notn} {\em Throughout this section we make the following assumptions and use the following notation.

\medskip

\begin{itemize}
\item[(i)] We assume that $k$ and $\ell$ are two multiplicatively independent natural numbers.

\medskip

\item[(ii)] We assume there exist primes $p$ and $q$ such that $p|k$ and $p$ does not divide $\ell$ 
and such that $q|\ell$ and $q$ does not divide $k$.

\medskip

\item[(iii)] We assume that $F(x)$ is a $k$-Mahler complex power series that satisfies an equation of the form 
$$
\sum_{i=0}^d P_i(x)F(x^{k^i}) \ = \ 0
$$ 
with $P_0,\ldots ,P_d\in \mathbb{C}[x]$ and $P_0(0)\neq 0$.

\medskip

\item[(iv)] We assume that $F(x)$ is an $\ell$-Mahler complex power series that satisfies an equation of the form 
$$
\sum_{i=0}^e Q_i(x)F(x^{\ell^i}) \ = \ 0
$$ 
with $Q_0,\ldots ,Q_e\in \mathbb{C}[x]$ and $Q_0(0)\neq 0$.
\end{itemize}\label{notn: 1}}
\end{notn}

In this section, our aim is to prove the following result.

\begin{thm}
Let $F(x)\in\mathbb C[[x]]$ be a power series that satisfies Assumption-Notation \ref{notn: 1} 
and  that  is not a polynomial. 
Then $F(x)$ satisfies a non-trivial $k$-Mahler equation of the form 
$$
\sum_{i=0}^d P_i(x)F(x^{k^i}) \ = \ 0
$$ 
with the property that $P_0(0)=1$ and $P_0(\alpha)\neq 0$ if $\alpha$ is a root of unity satisfying $\alpha^{k^j}=\alpha$ 
for some positive integer $j$.
\label{thm: elim}
\end{thm}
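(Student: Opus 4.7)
The plan is to start from an arbitrary $k$-Mahler equation for $F$ (normalized via Corollary \ref{cor: P01Q01} so that $P_0(0)=1$, with a parallel normalization $Q_0(0)=1$ on the $\ell$-side) and to iteratively modify it so as to strip from $P_0$ the ``bad'' roots, i.e.\ the roots of unity $\alpha$ with $\alpha^{k^j}=\alpha$ for some $j\ge 1$. The main ingredients are the Becker decomposition of Proposition \ref{rem: decomp}, the classification of regular inverse products given by Proposition \ref{prop: omega}, the analytic growth estimate of Proposition \ref{thm: analyticunitdisc} for $k$-regular power series, and the extra rigidity coming from the $\ell$-Mahler structure of $F$ together with the prime asymmetry of Assumption--Notation \ref{notn: 1}.

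First I would apply Proposition \ref{rem: decomp} to write
$$F(x) = \Bigl(\prod_{i\ge 0}P_0(x^{k^i})\Bigr)^{-1} G(x),$$
with $G$ a $k$-Becker, and hence (by Theorem \ref{thm: beck}) $k$-regular, power series. Factor $P_0(x) = A(x)B(x)$, where $A(x)$ is the product of the factors $(1-x/\omega)^{m_\omega}$ for roots of unity $\omega$ satisfying $\omega^{k^s}\neq \omega$ for every $s\ge 1$, and $B(x)$ collects the remaining factors (those with $k$-periodic roots of unity together with non-root-of-unity factors). By Proposition \ref{prop: omega} each $\prod_{i}(1-\omega x^{k^i})^{-1}$ is $k$-regular, so Proposition \ref{prop: reg2}(iii) gives that $\prod_i A(x^{k^i})^{-1}$ is $k$-regular. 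Absorbing this factor into $G$ yields
$$F(x) = \Bigl(\prod_{i\ge 0}B(x^{k^i})\Bigr)^{-1} G^{\ast}(x),$$
with $G^{\ast}$ a $k$-regular power series. Symmetrically, the $\ell$-Mahler equation produces $F=\bigl(\prod_i B'(x^{\ell^i})\bigr)^{-1}H^{\ast}(x)$ with $H^{\ast}$ an $\ell$-regular power series.

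The crux is to show that we may take $B$ with no bad roots. Suppose that $\alpha$ is a bad root of $B$ with $\alpha^{k^j}=\alpha$. Along the ray $x=r\alpha$ with $r\to 1^-$, infinitely many factors $B(x^{k^{mj}})$ vanish at $x=\alpha$, and a direct estimate shows that $|\prod_{i}B(r\alpha \cdot \alpha^{k^i-1}\cdots)|$ decays super-polynomially in $(1-r)^{-1}$; in fact, the multiplicities combine to produce decay of order $\exp\bigl(-c(\log(1-r))^2\bigr)$. By Proposition \ref{thm: analyticunitdisc}, $G^{\ast}$ has only polynomial growth, so $F(r\alpha)$ would grow super-polynomially. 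But the $\ell$-side representation forces $|F(r\alpha)|$ to be controlled by polynomial growth of $H^{\ast}$ except at points where $\prod_i B'(x^{\ell^i})$ has a comparable super-polynomial vanishing; such points are necessarily $\ell$-periodic roots of unity that are roots of $Q_0$ (with enough multiplicity). The asymmetry between $k$ and $\ell$ captured by the primes $p\mid k$, $p\nmid \ell$ and $q\mid \ell$, $q\nmid k$ then serves to preclude the simultaneous super-polynomial vanishing on both sides, and, when combined with the $\ell$-Mahler equation, lets me rewrite $F$ so as to remove $\alpha$ from the leading polynomial, producing a new $k$-Mahler equation for $F$ with $P_0(0)=1$ and strictly smaller total bad-root multiplicity. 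Iteration terminates and Lemma \ref{lem: constant}, together with the hypothesis that $F$ is not a polynomial, ensures that the resulting equation remains non-trivial.

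The principal obstacle is the comparison step: converting the asymptotic growth obstruction into an explicit polynomial identity exhibiting a new $k$-Mahler equation with fewer bad roots. This demands careful bookkeeping of vanishing multiplicities in both infinite products and a genuine use of the number-theoretic asymmetry between $k$ and $\ell$; without that asymmetry the super-polynomial growth at a bad $\alpha$ could in principle be compensated on the $\ell$-side, so it is precisely here that the full strength of Assumption--Notation \ref{notn: 1} enters the argument.
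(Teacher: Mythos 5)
Your outline captures the right circle of ideas (Becker decomposition, polynomial growth of $k$-regular series, removal of the non-$k$-periodic root-of-unity factors via Proposition~\ref{prop: omega}, the super-polynomial growth of $\prod_j(1-t^{k^j})^{-1}$, the $p\mid k,\ p\nmid\ell$ asymmetry), but there is a genuine gap at the crux of the argument, and it is precisely the step you flag as the ``principal obstacle.''

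The inference ``$G^{\ast}$ has only polynomial growth, so $F(r\alpha)$ would grow super-polynomially'' does not follow. Proposition~\ref{thm: analyticunitdisc} only gives an \emph{upper} bound $|G^{\ast}(x)|<C(1-|x|)^{-m}$; it says nothing preventing $G^{\ast}(r\alpha)$ from decaying super-polynomially as $r\to 1^-$, exactly fast enough to cancel the blow-up of $\bigl(\prod_j B((r\alpha)^{k^j})\bigr)^{-1}$. In fact such cancellation is the expected generic behavior, since $F$ itself may well be bounded near $\alpha$. So the contradiction you want never materializes from the upper bound on $G^{\ast}$ alone, and the whole reduction scheme built on it (``rewrite $F$ with fewer bad roots, iterate'') is not supported. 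What is actually needed is a \emph{lower} bound on the relevant quantity, and obtaining one is the entire content of the hard direction of Theorem~\ref{thm: elim}.

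The paper handles this by constructing an auxiliary function $L(x)$ (obtained by multiplying $F_0$ by the $k$-products $\prod_j S_0(x^{k^{ij}})$ and $\prod_j(1-\alpha^{-1}x^{k^{ij}})^b$, where the exponent $b$ is chosen from the vanishing orders $c_j$ of the coefficients $\widetilde Q_j$ at $\alpha$), writing the Mahler equation for $L$ in companion-matrix form $w(x)=A(x)w(x^k)$, and proving that $A(\alpha)$ is \emph{not nilpotent} (Lemma~\ref{lem: nilp}). Then Lemma~\ref{lem: kappa} and Corollary~\ref{cor: kappa} extract a lower bound $\|w(t\alpha\zeta)\|>(1-t)^C$ along a sequence $t\to 1$, where $\zeta$ is a primitive $p^n$-th root of unity for large $n$ --- this choice of $\zeta$ is exactly where the asymmetry $p\mid k,\ p\nmid\ell$ enters quantitatively, because it keeps the $\ell$-orbit of $\alpha\zeta$ away from the zeros of $R_0$ and hence gives a lower bound on $\prod_j R_0((t\alpha\zeta)^{\ell^j})$ via Corollary~\ref{cor: mog}. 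Combining all estimates at the point $t\alpha\zeta$ (not at $t\alpha$, where the orbit structure is wrong) then contradicts Lemma~\ref{lem: last}. None of this appears in your proposal; you would need to supply an analogue of the non-nilpotency argument and the lower bound of Lemma~\ref{lem: kappa} before the growth comparison does any work.

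A minor additional point: the paper does not iterate to reduce bad-root multiplicity; it fixes $P_0$ as a generator of the ideal $I=\{P: P(x)F(x)\in\sum_{i=a}^b\mathbb{C}[x]F(x^{k^i})\text{ for some }a<b\}$ and derives a contradiction directly from the existence of a single bad root. Your iterative scheme would also require verifying non-triviality at each step, which your appeal to Lemma~\ref{lem: constant} does not quite cover.
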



\subsection{Asymptotic estimates for some infinite products}

We first study the behaviour around the unit circle of infinite products of the form 
$$
\left(\prod_{i= 0}^{\infty} P(x^{k^i})\right)^{-1} \, ,
$$
where $P(x)\in \mathbb C[x]$ and $P(0)=1$. 

\begin{lem}\label{lem: last}
Let $k\ge 2$ be a natural number. Then
$$
\lim_{\underset{0<t<1}{ t \to 1}}  \left(\prod_{j= 0}^{\infty} \frac{1}{1- t^{k^j}}\right) \cdot (1-t)^A = \infty \, ,
$$ 
 for every positive real number $A$.
\end{lem}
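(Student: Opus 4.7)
Taking logarithms, the assertion is equivalent to
$$-\sum_{j=0}^{\infty}\log(1-t^{k^j})+A\log(1-t)\longrightarrow +\infty\quad\text{as }t\to 1^-.$$
Setting $u:=1-t\to 0^+$, the second term equals $-A|\log u|$, which is linear in $|\log u|$. My strategy is to show that the first (positive) sum grows \emph{quadratically} in $|\log u|$; the quadratic growth will then absorb the linear term for every $A>0$.

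The key (crude but sharp enough) inequality is
$$1-t^{k^j}\ =\ (1-t)(1+t+\cdots+t^{k^j-1})\ \le\ k^j(1-t)\ =\ k^j u,$$
which yields $-\log(1-t^{k^j})\ \ge\ |\log u|-j\log k$. This lower bound is nonnegative exactly for $j\le N$, where $N:=\lfloor|\log u|/\log k\rfloor$. Summing it over $0\le j\le N$ gives
$$-\sum_{j=0}^{N}\log(1-t^{k^j})\ \ge\ (N+1)|\log u|-\frac{N(N+1)}{2}\log k.$$
A direct substitution of $N=|\log u|/\log k+O(1)$ shows that this expression equals $\dfrac{|\log u|^{2}}{2\log k}+O(|\log u|)$. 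Since the discarded tail $\sum_{j>N}(-\log(1-t^{k^j}))$ is nonnegative, the same lower bound holds for the full infinite sum.

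Combining, we obtain for $u$ sufficiently small
$$-\sum_{j=0}^{\infty}\log(1-t^{k^j})+A\log(1-t)\ \ge\ \frac{|\log u|^{2}}{2\log k}-A|\log u|+O(|\log u|),$$
and the right-hand side tends to $+\infty$ as $u\to 0^+$. This proves the claim.

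I do not expect a genuine obstacle here: the entire argument is elementary asymptotic analysis, and the heart of it is a one-line observation, namely that the number of indices $j$ for which the factor $1-t^{k^j}$ is comparable to (the small quantity) $u$ is of order $|\log u|$, with each such factor contributing a term of size $|\log u|$ to the logarithm of the reciprocal product; this is what forces the quadratic blow-up and makes any factor $(1-t)^{A}$ harmless.
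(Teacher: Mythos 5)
Your argument is correct, and it rests on exactly the same key estimate as the paper's proof, namely $1-t^{k^j}=(1-t)(1+t+\cdots+t^{k^j-1})\le k^j(1-t)$ followed by truncating the product at a suitable index $N$. The difference is one of packaging. The paper works multiplicatively: it chooses $N$ to be the largest integer with $1-t<k^{-(N+1)^2}$, bounds the truncated product below by $(1-t)^{-(N+1)}k^{-(N+1)^2}>(1-t)^{-N}$, and then notes that this $N$ grows like $\sqrt{|\log(1-t)|}$, so $(1-t)^{-N}$ already dominates $(1-t)^{-A}$ for every fixed $A$. You instead take logarithms, cut at the natural threshold $N=\lfloor|\log(1-t)|/\log k\rfloor$ where the termwise lower bound $|\log u|-j\log k$ becomes nonpositive, and sum the resulting arithmetic progression to obtain a lower bound of order $|\log(1-t)|^{2}/(2\log k)$. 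Your choice of $N$ is about the square of the paper's, and consequently your lower bound on the log of the product is of order $|\log u|^2$ rather than the paper's $|\log u|^{3/2}$; both blow up and both absorb the linear term $A|\log u|$, so both suffice. The only thing I would tidy is the asymptotic step: rather than writing $N=|\log u|/\log k+O(1)$ and invoking a substitution, you can avoid the $O(\cdot)$ bookkeeping entirely by using the two-sided bound $|\log u|/\log k - 1 \le N \le |\log u|/\log k$ and plugging those in directly, which keeps the argument fully explicit.
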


\begin{proof}
Let $t$ be in $(1-1/k^9,1)$. Let $N\geq 2$ be the largest natural number such that $t\in(1-k^{-(N+1)^2},1)$.  
Then
\begin{eqnarray*}
  \prod_{j= 0}^{\infty} (1-t^{k^j})^{-1}
  &\ge & \prod_{j= 0}^{N} (1-t^{k^j})^{-1} \\
  &=& (1-t)^{-(N+1)} \prod_{j=0}^N (1+t+\cdots +t^{k^j-1})^{-1} \\
  &\ge & (1-t)^{-(N+1)} \prod_{j=0}^N k^{-j} \\
  &\ge & (1-t)^{-(N+1)} k^{-(N+1)^2} \\ 
  &> &  (1-t)^{-N} \, .  
\end{eqnarray*} 
By definition of $N$, we obtain that $t<1-k^{-(N+2)^2}$, which easily gives that 
$$
N> \sqrt{\frac{-\log (1-t)}{4\log k}} \,\cdot
$$
This ends the proof for the right-hand side tends to infinity when $t$ tends to $1$.  
\end{proof}

\begin{lem} \label{lem: xxx}
Let $k\ge 2$ be a natural number.  Then for $t\in (0,1)$, we have
$$
\sum_{i=1}^{\infty} t^i/i  \ \ge \ (1-1/k)\sum_{i=0}^{\infty} t^{k^i} \, .
$$
\end{lem}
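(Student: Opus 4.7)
The plan is to prove the inequality by a term-by-term comparison obtained from a dyadic-style grouping of the harmonic-like series on the left. The idea is to pull off the first term and then partition the remaining indices $i \ge 2$ into blocks of the form $\{k^j+1,\dots,k^{j+1}\}$ for $j \ge 0$, so that each block contributes (essentially) one term $t^{k^{j+1}}$ on the right-hand side.

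More precisely, I would write
$$
\sum_{i=1}^{\infty} \frac{t^i}{i} \ = \ t \ + \ \sum_{j=0}^{\infty} \sum_{i=k^j+1}^{k^{j+1}} \frac{t^i}{i} \, ,
$$
where the grouping is a disjoint partition of $\{i \in \mathbb N : i \ge 2\}$ and the $j$-th block contains exactly $k^{j+1}-k^j = k^j(k-1)$ integers. For every $i$ in the $j$-th block one has $i \le k^{j+1}$ and $0<t<1$, hence both $1/i \ge 1/k^{j+1}$ and $t^i \ge t^{k^{j+1}}$; multiplying these and summing over the $k^j(k-1)$ terms of the block yields
$$
\sum_{i=k^j+1}^{k^{j+1}} \frac{t^i}{i} \ \ge \ \frac{k^j(k-1)}{k^{j+1}}\, t^{k^{j+1}} \ = \ \left(1-\frac{1}{k}\right) t^{k^{j+1}} \, .
$$

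Summing this bound over $j \ge 0$ gives
$$
\sum_{j=0}^{\infty} \sum_{i=k^j+1}^{k^{j+1}} \frac{t^i}{i} \ \ge \ \left(1-\frac{1}{k}\right) \sum_{j=1}^{\infty} t^{k^j} \, .
$$
Finally, the extracted initial term satisfies the trivial inequality $t \ge (1-1/k)\, t = (1-1/k)\, t^{k^0}$, which supplies precisely the missing $j=0$ contribution on the right. Adding these two estimates produces
$$
\sum_{i=1}^{\infty} \frac{t^i}{i} \ \ge \ \left(1-\frac{1}{k}\right)\sum_{j=0}^{\infty} t^{k^j} \, ,
$$
as required. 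There is no real obstacle in this argument; the only subtlety worth flagging is the choice of the block $\{k^j+1,\dots,k^{j+1}\}$ (rather than $\{k^j,\dots,k^{j+1}-1\}$), which aligns the uniform lower bound with the term $t^{k^{j+1}}$ and leaves the isolated index $i=1$ to account for the $t^{k^0}$ contribution on the right.
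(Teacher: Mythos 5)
Your proof is correct and follows essentially the same approach as the paper: peel off the initial term $t$, partition $\{i\ge 2\}$ into the blocks $\{k^j+1,\dots,k^{j+1}\}$, lower-bound each summand $t^i/i$ by $t^{k^{j+1}}/k^{j+1}$, and use $t\ge(1-1/k)t^{k^0}$ to absorb the leftover term. The paper's argument is the same computation written as a single chain of inequalities, so there is nothing substantively different.
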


\begin{proof} We have
\begin{eqnarray*}
 \sum_{i=1}^{\infty} t^{i}/i &=& t + \sum_{i=0}^{\infty} \sum_{j=k^i+1}^{k^{i+1}} t^j/j \\
 &\ge & t +  \sum_{i=0}^{\infty} \sum_{j=k^i+1}^{k^{i+1}}  t^{k^{i+1}}/k^{i+1} \\
 &=& t+\sum_{i=0}^{\infty} t^{k^{i+1}}(k^{i+1}-k^i)/k^{i+1} \\
 &=& t + (1-1/k)\sum_{i=0}^{\infty} t^{k^{i+1}}\\
 &\ge & (1-1/k)\sum_{i=0}^{\infty} t^{k^{i}} \, ,
 \end{eqnarray*}
which ends the proof.
\end{proof}

 \begin{lem}
 \label{lem: wai}
  Let $k\ge 2$ be a natural number and let $\lambda \neq 1$ be a complex number.  
  Then there exist two positive real numbers $A$ and $\varepsilon$ such that
  $$
  (1-t)^A \ < \  \left|\prod_{j= 0}^{\infty} \frac{1}{1-\lambda t^{k^j}}\right| \ < \ (1-t)^{-A}
  $$  
  whenever $1-\varepsilon<t<1$. 
  \end{lem}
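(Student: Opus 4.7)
The plan is to take logarithms and reduce the two-sided estimate to a bound of the form $|L(t)| \le A|\log(1-t)|$, where $L(t) := \log\bigl|\prod_{j=0}^{\infty} (1-\lambda t^{k^j})^{-1}\bigr| = -\sum_{j \ge 0} \log|1-\lambda t^{k^j}|$. Such a bound is equivalent to $(1-t)^A \le \bigl|\prod_{j=0}^{\infty} (1-\lambda t^{k^j})^{-1}\bigr| \le (1-t)^{-A}$, so proving it gives the lemma.

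The first key step is to choose a threshold $N = N(t)$, defined as the smallest nonnegative integer with $|\lambda|\,t^{k^N} \le 1/2$. A short calculation using $-\log t \sim 1-t$ gives $N = O(|\log(1-t)|)$ as $t \to 1^-$. I then split the sum defining $L(t)$ into a ``tail'' over $j \ge N$ and a ``head'' over $j < N$. On the tail, the uniform bound $|\lambda t^{k^j}| \le 1/2$ yields the standard Taylor estimate $|\log(1-\lambda t^{k^j})| \le 2|\lambda|\,t^{k^j}$; summing and applying Lemma \ref{lem: xxx}, which provides $\sum_{j \ge 0} t^{k^j} \le \frac{k}{k-1}|\log(1-t)|$, bounds the tail contribution by $\frac{2k|\lambda|}{k-1}|\log(1-t)|$.

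The second key step is the head estimate. For each $j < N$, I use the trivial upper bound $|1-\lambda t^{k^j}| \le 1+|\lambda|$ together with a uniform lower bound $|1-\lambda t^{k^j}| \ge c_\lambda$, where $c_\lambda := \inf_{s \in [0,1]} |1-\lambda s|$. The function $s \mapsto |1-\lambda s|^2 = 1 - 2s\,\mathrm{Re}(\lambda) + s^2|\lambda|^2$ attains its minimum on $[0,1]$ either at an endpoint or at the interior critical point $s^* = \mathrm{Re}(\lambda)/|\lambda|^2$, the value there being $|\mathrm{Im}(\lambda)|/|\lambda|$. This minimum is strictly positive provided $\lambda \neq 1$ and $\lambda$ is not a real number greater than $1$. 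Hence each head term satisfies $|\log|1-\lambda t^{k^j}|| \le \max\bigl(\log(1+|\lambda|),\,|\log c_\lambda|\bigr)$, and summing $N = O(|\log(1-t)|)$ such terms contributes an additional $O(|\log(1-t)|)$. Adding the two contributions yields a suitable $A = A(\lambda,k)$ and concludes the proof.

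The \textbf{main obstacle} is precisely the case $\lambda \in \mathbb{R}_{>1}$: there $c_\lambda = 0$ (attained at $s = 1/\lambda$), and in fact the product $\prod_{j=0}^{\infty} (1-\lambda t^{k^j})^{-1}$ has genuine poles at $t = \lambda^{-1/k^j}$, a sequence accumulating at $1$, so no uniform upper bound $(1-t)^{-A}$ can hold on all of $(1-\varepsilon,1)$. In the intended application such $\lambda$ do not appear---the relevant roots of the polynomials $P_0, Q_0$ controlling the Mahler equations lie on or inside the unit circle---so one may restrict to $\lambda \notin \mathbb{R}_{>1}$, but a fully rigorous write-up should either make this restriction explicit or dispose of the pathological case by an independent argument, perhaps by exploiting the functional equation $H(t) = (1-\lambda t)H(t^k)$ satisfied by $H(t) := \prod_{j=0}^{\infty}(1-\lambda t^{k^j})$ to avoid the problematic values of $t$.
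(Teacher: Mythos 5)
Your argument follows the same route as the paper's proof: both control $\log\bigl|\prod_{j}(1-\lambda t^{k^j})^{-1}\bigr|$ by splitting at a threshold $N=O(|\log(1-t)|)$, bounding the tail $j\ge N$ via a Taylor estimate combined with Lemma~\ref{lem: xxx}, and bounding each of the $O(|\log(1-t)|)$ head terms by a uniform two-sided bound on $|1-\lambda t^{k^j}|$ (the paper's $c_0$, your $c_\lambda$). The logarithmic packaging is cosmetic, so this is essentially the same argument rather than a different one.

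The obstacle you flag is a genuine gap, and it is present in the paper's proof as well. The paper asserts, from $\lambda\ne 1$ alone, that $\inf\{|1-\lambda t^{k^j}|:t\in(1-\varepsilon_0,1),\ j\ge 0\}>c_0>0$ for some $\varepsilon_0>0$; but since $\{t^{k^j}:t\in(1-\varepsilon_0,1),\ j\ge 0\}$ exhausts $(0,1)$, this infimum equals $\inf_{s\in(0,1)}|1-\lambda s|$, which is zero exactly when $\lambda\in\mathbb{R}_{\ge 1}$. For real $\lambda>1$ the product in fact has poles at $t=\lambda^{-1/k^j}$ accumulating at $1$, so the stated upper bound cannot hold on any $(1-\varepsilon,1)$ and the hypothesis should read $\lambda\notin[1,\infty)$ rather than merely $\lambda\ne 1$. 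Where your write-up goes wrong is in the attempted repair: with the paper's factorization $P(x)=\prod_i(1-\beta_i^{-1}x)$ the applied value is $\lambda=\beta_i^{-1}\alpha$ with $|\alpha|=1$, so $|\lambda|=|\beta_i|^{-1}$, and roots of $P_0$ on or \emph{inside} the unit circle give $|\lambda|\ge 1$ --- the dangerous regime, not the safe one. The bad case $\lambda\in\mathbb{R}_{>1}$ occurs precisely when some root $\beta_i$ lies on the open radial segment from $0$ to $\alpha$, and the paper does not appear to rule this out in the places where the lemma is invoked (Corollary~\ref{cor: mog} and Theorem~\ref{thm: elim}); your suggested restriction does not rule it out either.
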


 \begin{proof}  We first prove the inequality on the right-hand side. 
 
 Note that since $\lambda\not=1$ there exist two real numbers $\varepsilon_0$ and $c_0$, $c_0<1$, such that
 \begin{equation}
 \label{eq: inf}
 \inf \left\{ \left\vert 1 - \lambda t^{k^j} \right\vert \mid t\in (1-\varepsilon_0,1), j \geq 0\right\} > c_0 \, .
\end{equation} 
 Let $t\in (1-\varepsilon_0,1)$ and let $N$ be the largest nonnegative integer such that 
 $t^{k^N} \ge 1/2$.  Then for $j\ge 1$ we have 
$t^{k^{N+j}} = (t^{k^{N+1}})^{k^{j-1}} <   (1/2)^{k^{j-1}}$. 
Hence
$$
\left|1-\lambda t^{k^{N+j}}\right | \ge 1-  \vert \lambda\vert (1/2)^{k^{j-1}} \, .
$$
Since the series $\sum_{j\ge 0} (1/2)^{k^{j-1}}$ converges, we get that  the infinite product 
$$
\prod_{j= 0}^{\infty} \left| \frac{1}{1-\lambda t^{k^{N+j} }} \right|
$$ 
converges to some positive constant $c_1$.  Then
\begin{eqnarray*}  \left|\prod_{j=0}^{\infty} (1-\lambda t^{k^j})^{-1} \right|
  &=& \prod_{j= 0}^N \left|1-\lambda t^{k^j}\right|^{-1} \prod_{j= 1}^{\infty} \left|1-\lambda t^{k^{N+j}} \right|^{-1} \\ \\
  &\le & (1/c_0)^{N+1} c_1\\ \\
  &=& (k^{N+1})^{-\log c_0/\log k} c_1\, .
  \end{eqnarray*}
Furthermore, we have by assumption that $t^{k^{N+1}}<1/2$ and thus $k^{N+1}< -\log 2 /\log t$. 
This implies that 
 $$
  \left|\prod_{j=0}^{\infty} (1-\lambda t^{k^j})^{-1} \right| \leq c_1 \left(-\log 2 /\log t\right)^{-\log c_0/\log k} \, .
 $$
On the other hand, note that $\lim_{t\to 1} (1-t)/\log(t)=-1$ and hence there exists 
some positive $\varepsilon < \varepsilon_0$  such that
$$ 
c_1 \left(-\log 2 /\log t\right)^{-\log c_0/\log k} < c_1 \left(2\log 2 (1-t)\right)^{\log c_0/\log k}  \, ,
$$ 
whenever $t\in (1-\varepsilon,1)$.  
Since $c_0<1$, we obtain that there exists a positive real number $A_1$ such that 
$$
  \left|\prod_{j= 0}^{\infty} (1-\lambda t^{k^j})^{-1} \right| < (1-t)^{-A_1} \, ,
 $$
for all $t\in (1-\varepsilon,1)$. This gives the right-hand side bound in the statement of the lemma.  
  
  \medskip
  
  To get the left-hand side, note that for all $t\in (0,1)$, 
  $$
  \left|\prod_{j= 0}^{\infty} \frac{1}{1-\lambda t^{k^j}}\right|\ge \prod_{j= 0}^{\infty} (1+\vert \lambda\vert t^{k^j})^{-1}
   \ge \prod_{j= 0}^{\infty} \exp(-|\lambda|t^{k^j}) \, .
  $$  
 By Lemma \ref{lem: xxx},  we have
 $$
 \prod_{j=0}^{\infty} \exp(-|\lambda|t^{k^j})\ge \exp\left(-|\lambda|(1-1/k)^{-1}\sum_{i= 1}^{\infty} t^i/i\right)
 = (1-t)^{\vert\lambda\vert k/(k-1)} \,.
 $$ 
We thus obtain that, for all $t\in (0,1)$,  
$$
\left|\prod_{j= 0}^{\infty} \frac{1}{1-\lambda t^{k^j}} \right| > (1-t)^{A_2} \,,
$$ 
where $A_2 := \lfloor \vert\lambda\vert k/(k-1) \rfloor + 1$.  
Taking $A$ to be equal to the maximum of $A_1$ and $A_2$, we get the desired result. 
  \end{proof}

\begin{cor}
 \label{cor: mog}
  Let $k\ge 2$ be a natural number, let $\alpha$ be root of unity that satisfies $\alpha^{k}=\alpha$, and let 
  $P(x)$ be a nonzero polynomial with $P(0)=1$ and $P(\alpha)\neq 0$.  
  Then there exist two positive real numbers $A$ and $\varepsilon>$ such that
  $$
  (1-t)^A\ < \  \left| \left(\prod_{j= 0}^{\infty} P((t\alpha)^{k^j}))\right)^{-1}\right| \ < \ (1-t)^{-A}
  $$  
  whenever $1-\varepsilon<t<1$. 
\end{cor}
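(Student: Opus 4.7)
The plan is to reduce the statement to a finite product of factors of the type already handled in Lemma \ref{lem: wai}. The first observation is that the hypothesis $\alpha^k=\alpha$ propagates to all iterates: an immediate induction on $j$ shows that $\alpha^{k^j}=\alpha$ for every $j\ge 0$. Consequently $(t\alpha)^{k^j}=t^{k^j}\alpha$, and the infinite product of interest simplifies to
$$
\prod_{j=0}^{\infty} P\bigl((t\alpha)^{k^j}\bigr) \;=\; \prod_{j=0}^{\infty} P\bigl(\alpha t^{k^j}\bigr).
$$

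Since $P(x)\in\mathbb{C}[x]$ with $P(0)=1$, we can factor $P$ over $\mathbb{C}$ as $P(x)=\prod_{i=1}^{n}(1-\lambda_i x)$ for some nonzero complex numbers $\lambda_1,\ldots,\lambda_n$ (the $\lambda_i$ are the reciprocals of the roots of $P$, and none vanishes because $P(0)=1$). The hypothesis $P(\alpha)\ne 0$ translates into $\lambda_i\alpha\ne 1$ for every $i\in\{1,\ldots,n\}$. Substituting into the infinite product and interchanging the two (absolutely convergent) products gives
$$
\prod_{j=0}^{\infty} P\bigl(\alpha t^{k^j}\bigr) \;=\; \prod_{i=1}^{n}\prod_{j=0}^{\infty}\bigl(1-(\lambda_i\alpha)\,t^{k^j}\bigr).
$$

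Now I apply Lemma \ref{lem: wai} to each index $i$ with the parameter $\lambda:=\lambda_i\alpha$, which is legitimate precisely because $\lambda_i\alpha\ne 1$. This produces, for every $i$, positive real numbers $A_i$ and $\varepsilon_i$ such that
$$
(1-t)^{A_i} \;<\; \left|\prod_{j=0}^{\infty}\bigl(1-(\lambda_i\alpha)\,t^{k^j}\bigr)^{-1}\right| \;<\; (1-t)^{-A_i}
$$
whenever $1-\varepsilon_i<t<1$. Setting $A:=A_1+\cdots+A_n$ and $\varepsilon:=\min(\varepsilon_1,\ldots,\varepsilon_n)$, and multiplying the $n$ inequalities (which is valid since $(1-t)^{A_i},(1-t)^{-A_i}>0$), yields
$$
(1-t)^{A} \;<\; \left|\left(\prod_{j=0}^{\infty} P\bigl((t\alpha)^{k^j}\bigr)\right)^{-1}\right| \;<\; (1-t)^{-A}
$$
for every $t\in(1-\varepsilon,1)$, which is exactly the claim.

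There is no genuine obstacle here: the corollary is really a corollary of Lemma \ref{lem: wai} once one notices the identity $\alpha^{k^j}=\alpha$ and factors $P$ linearly over $\mathbb{C}$. The only mild point to verify is that the non-vanishing of $P(\alpha)$ is exactly what one needs to ensure that each factor satisfies the hypothesis $\lambda\ne 1$ of Lemma \ref{lem: wai}; this is the content of the computation $P(\alpha)=\prod_i(1-\lambda_i\alpha)$.
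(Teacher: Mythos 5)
Your proof is correct and follows essentially the same route as the paper: exploit $\alpha^{k^j}=\alpha$ to rewrite $(t\alpha)^{k^j}$ as $\alpha t^{k^j}$, factor $P$ linearly over $\mathbb{C}$, apply Lemma \ref{lem: wai} factor by factor (noting $P(\alpha)\ne 0$ supplies the hypothesis $\lambda\ne 1$), then sum the exponents $A_i$ and take the minimum of the $\varepsilon_i$. The paper writes the reciprocal roots as $\beta_i^{-1}$ rather than $\lambda_i$, but the argument is the same.
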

  
  \begin{proof}
  Let $\beta_1,\ldots, \beta_s$ denote the complex roots of $P$ (considered with muliplicities) so that 
 we may factor $P(x)$ as $P(x)=(1-\beta_1^{-1}x)\cdots (1-\beta_s^{-1}x)$. We thus obtain 
 $$
\left|\prod_{j=0}^{\infty} \frac{1}{P((t\alpha)^{k^j})}\right| =  \prod_{i=1}^s \left|\prod_{j= 0}^{\infty} \frac{1}{1-\beta_i^{-1}\alpha t^{k^j}}\right| \, ,
 $$
 where $\beta_i^{-1}\alpha \not= 1$ for every $i\in\{1,\ldots,s\}$.  
 Then by Lemma \ref{lem: wai}, there are natural numbers $A_i$  and positive real numbers $\varepsilon_i$, $0<\varepsilon_i<1$, such that
  $$
  (1-t)^{A_i} \ < \  \left|\prod_{j= 0}^{\infty} (1-\beta_i^{-1}\alpha t)^{-1}\right| \ < \ (1-t)^{-A_i}
  $$
  whenever $1-\varepsilon_i<t<1$.  Taking $\varepsilon := \min(\varepsilon_1,\ldots, \varepsilon_s)$ and $A:=\sum_{i=1}^s A_i$, 
  we obtain the desired result.
  \end{proof}
  

\subsection{Asymptotic estimates for Becker functions}  
  
  We are now going to provide asymptotic estimates for Becker functions. 
  We denote by $\Vert \cdot \Vert$ a norm on $\mathbb C^d$. 
  We let $B(x,r)$ (respectively $\overline{B(x,r)}$) denote 
  the open (respectively closed) ball of center $x$ and radius $r$. 
  Our results will not depend on the choice of this norm. 
  
\begin{lem}\label{lem: kappa}
Let $d$ and $k$ be two natural numbers, $\alpha$  a root of unity such that $\alpha^{k}=\alpha$, and  
$A:\overline{B(0,1)}\to M_d(\mathbb{C})$  a continuous matrix-valued function.   
Let us assume that $w(x)\in \mathbb{C}[[x]]^d$ satisfies the equation 
$$
w(x)=A(x)w(x^k)
$$ 
for all $x\in  B(0,1)$. Let us also assume that the following properties hold.
\medskip

\begin{itemize}
 
 \item[(i)] The coordinates of $w(x)$ are analytic in $B(0,1)$ and continuous on  $\overline{B(0,1)}$.
 
 \medskip
 
 \item[(ii)] The matrix $A(\alpha)$ is not nilpotent. 
 
\medskip

\item[(iii)] The set $\left\{w(x)~\mid~x\in B(0,1)\right\}$ is not contained in a proper subspace 
of $\mathbb{C}^d$. 

\end{itemize}

\medskip

Then there exist a positive real number $C$ and a subset $S\subseteq (0,1)$ that has $1$ as a limit point 
such that
$$
||w(t\alpha)|| > (1-t)^{C}
$$ 
for all $t\in S$.
\end{lem}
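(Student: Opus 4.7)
The plan is to iterate the functional equation along the radial segment from $0$ to $\alpha$ and exploit the non-nilpotence of $B := A(\alpha)$ to prevent $\|w(t\alpha)\|$ from decaying too fast. If $w(\alpha)\neq 0$, the conclusion is immediate from continuity at $\alpha$, with $C=1$ and $S=(1-\varepsilon,1)$ for some $\varepsilon>0$; so I assume $w(\alpha)=0$.

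Using $\alpha^k=\alpha$, for $s\in(0,1)$ and $n\geq 1$ the iteration of $w(x)=A(x)w(x^k)$ starting at $x=s^{1/k^n}\alpha$ yields
\[
w(s^{1/k^n}\alpha) \;=\; P_n(s)\,w(s\alpha),\qquad P_n(s):= A(s^{1/k^n}\alpha)\,A(s^{1/k^{n-1}}\alpha)\cdots A(s^{1/k}\alpha).
\]
Let $\rho>0$ be the spectral radius of $B$ (positive by (ii)), let $V\subseteq\mathbb{C}^d$ be the sum of the generalized eigenspaces of $B$ for all eigenvalues of modulus $\rho$, and let $\pi$ be the projection onto $V$ along the sum of the remaining generalized eigenspaces. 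By condition (iii) and the identity theorem, the analytic function $s\mapsto \pi w(s\alpha)$ is not identically zero on $(0,1)$ — else $\pi\circ w$ would vanish throughout $B(0,1)$, contradicting (iii). Its zero set is therefore discrete, and I may choose $s_0\in(0,1)$ arbitrarily close to $1$ with $\pi w(s_0\alpha)\neq 0$. Since $|s_0^{1/k^j}-1|\leq 1-s_0$ for every $j\geq 1$, by uniform continuity of $A$ on $\overline{B(0,1)}$ I can ensure that $\|A(s_0^{1/k^j}\alpha)-B\|<\varepsilon$ for every $j\geq 1$, where $\varepsilon>0$ is any prescribed constant.

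The core technical step is a perturbation bound for products of matrices close to $B$: for $\varepsilon$ small enough (relative to the gap between $\rho$ and the next largest modulus of an eigenvalue of $B$), and with $v:=w(s_0\alpha)$,
\[
\|P_n(s_0)\,v\| \;\geq\; c\,\rho^n\,\|\pi v\|
\]
for some $c>0$ and all large enough $n$. The idea is that each factor $A(s_0^{1/k^j}\alpha)=B+E_j$ with $\|E_j\|<\varepsilon$ nearly preserves the $B$-invariant splitting $\mathbb{C}^d=V\oplus W$; on $V$ the iterate of the products has norm at least $c_0\,\rho^n$, while the contribution from $W$ is at most $(\rho'+\varepsilon)^n$ with $\rho'<\rho$, and choosing $\varepsilon<(\rho-\rho')/2$ makes the $V$-term dominate.

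Setting $t_n:=s_0^{1/k^n}$, one has $1-t_n\sim -\log(s_0)/k^n$ as $n\to\infty$, so $\rho^n>(1-t_n)^C$ for all large $n$ whenever $C>-\log\rho/\log k$. Taking $C:=\max\bigl(1,\lceil-\log\rho/\log k\rceil+1\bigr)$ and $S:=\{t_n:n\geq N_0\}$ for $N_0$ large then gives a subset of $(0,1)$ with $1$ as a limit point on which $\|w(t\alpha)\|>(1-t)^C$. The main obstacle is the perturbation estimate above: upper bounds on products of matrices close to $B$ are elementary, but the lower bound requires tracking the invariant splitting under perturbation and ruling out cancellation along the top spectral direction — straightforward when the top eigenvalue is simple, more delicate for Jordan blocks or when $B$ admits several eigenvalues of modulus $\rho$.
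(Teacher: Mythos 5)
Your strategy is the same as the paper's at the top level: iterate the functional equation along a chain $t_n=s_0^{1/k^n}\to 1$, control a projection onto an $A(\alpha)$-invariant subspace, and translate a geometric lower bound on the projected iterate into a $(1-t_n)^C$ bound. You split according to the top spectral modulus $\rho$, whereas the paper uses the Fitting decomposition of $A(\alpha)$ (invertible block $V$ plus nilpotent block $W$) with a compactness constant $c_0$ in place of $\rho$; this difference is cosmetic.

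The substantive problem is that the estimate you flag as ``the core technical step,'' namely $\|P_n(s_0)v\|\geq c\,\rho^n\,\|\pi v\|$, is not merely delicate but false as stated, and already when the top eigenvalue is simple. A single $\varepsilon$-perturbation tilts the dominated subspace away from $W$, and $\pi v\neq 0$ does not prevent $v$ from lying on the perturbed stable subspace of the product. Concretely, take $B=\left(\begin{array}{cc}2&0\\0&1\end{array}\right)$ and $E_j\equiv E=\left(\begin{array}{cc}0&\varepsilon\\0&0\end{array}\right)$: then $v=(-\varepsilon,1)^T$ has $\pi v=(-\varepsilon,0)^T\neq 0$, yet $(B+E)^n v=v$ stays bounded, so no bound of the claimed form can hold. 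What is missing is a uniform positive lower bound on the ``angle'' $\|\pi v\|/\|v\|$ of the anchoring vector, chosen \emph{compatibly with} the prescribed $\varepsilon$. The paper secures exactly this: it anchors the iteration at a point $t_0$ in the fixed compact annulus $(1-\varepsilon,1-\varepsilon/2)$, proves that $\mathcal Z=\{t\in[0,1-\varepsilon/2]:w(t\alpha)\in W\}$ is finite (analyticity plus hypothesis (iii)), and invokes compactness to get $\|\pi w(t_0\alpha)\|>c_1>0$ uniformly; combined with $\|w(t_0\alpha)\|\leq \sup_{\overline{B(0,1)}}\|w\|$ this is the required angle bound. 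Your choice of $s_0$ arbitrarily close to $1$ forgoes this control entirely --- since you reduce to $w(\alpha)=0$, nothing prevents $\|\pi w(s_0\alpha)\|/\|w(s_0\alpha)\|$ from being arbitrarily small --- so the perturbation estimate cannot be established from your setup. To repair the proposal, import the paper's compactness device (an anchor point bounded away from $1$ with a uniform lower bound on the projected value) before running the product.
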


\begin{proof}  
Since $A(\alpha)$ is not nilpotent, there is some natural number $e$ such that the kernel of $A(\alpha)^e$ and the kernel of 
$A(\alpha)^{e+1}$ are equal to a same proper subspace of $\mathbb{C}^d$, say $W$.  
Then there is a nonzero subspace $V$ such that $A(\alpha)(V)\subseteq V$ and 
$V\oplus W=\mathbb{C}^d$. 
Moreover, by compactness, there is a positive real number $c_0$, $c_0<1$, such that 
\begin{equation}\label{eq: minA}
||A(\alpha)(w)||\ge c_0
\end{equation} 
whenever $w\in V$ is a vector of norm $1$.  

Since every vector has a unique decomposition as a sum of elements from $V$ and $W$, 
we have a continuous linear projection 
map $\pi:\mathbb{C}^d\to V$ with the property that $u-\pi(u)\in W$ for all $u\in \mathbb{C}^d$.  
We infer from Inequality (\ref{eq: minA}) that 
\begin{equation}\label{eq: minpi}
||\pi(A(\alpha)(u))||= \Vert A(\alpha)(\pi(u))\Vert \ge c_0||\pi(u)||
\end{equation}
for all $u\in \mathbb{C}^d$. 
Since $A$ is continuous on $\overline{B(0,1)}$, Inequality (\ref{eq: minpi}) 
implies the existence of a positive constant 
$\varepsilon>0$ such that
$$
|\pi(A(x)(u))|| > c_0||\pi(u)||/2 \, ,
$$
for all $u\in \mathbb{C}^d$ and all $x\in B(\alpha,\varepsilon)\cap \overline{B(0,1)}$. 
It follows by a simple induction that if $x_1,\ldots ,x_m\in B(\alpha,\varepsilon)\cap \overline{B(0,1)}$ then
\begin{equation}\label{eq: minind}
||\pi(A(x_1)\cdots A(x_m)(u))|\ge (c_0/2)^m ||\pi(u)|| \, .
\end{equation}

Set  
$$
{\mathcal Z} :=\left\{t\in [0,1-\varepsilon/2]~\mid~w(t\alpha)\in W)\right\} \, .
$$ 
We claim that ${\mathcal Z}$ is a finite set. Otherwise, there would be a nonzero row vector $u$ such that 
$u\cdot w(t\alpha)=0$ for infinitely many $t\in [0,1-\varepsilon/2]$.  But $u\cdot w(x)$ is analytic in $B(0,1)$ 
for $w(x)$ is and hence 
it would be identically zero on $B(0,1)$ by the identity theorem. 
This would contradict the assumption that $\{w(x)~\mid~x\in  B(0,1)\}$ 
is not contained in a proper subspace of $\mathbb{C}^d$.   

\medskip

Let us pick a sequence $t_0,t_1,t_2,\ldots $ in $(0,1)$ such that:
\begin{enumerate}
\item[$\bullet$] $t_i^k=t_{i-1}$ for $i\ge 1$;
\item[$\bullet$] $t_0\in (1-\varepsilon, 1-\varepsilon/2)$;
\item[$\bullet$] $t_0\not \in \mathcal Z$.
\end{enumerate}
Note that $t_n\to 1$ as $n\to \infty$.
Since $\mathcal Z$ is finite, there is an open neighbourhood $U\subseteq [0,1]$ of $\mathcal Z$ such that 
$t_0\not\in U$. Set
$X:=[0,1-\varepsilon/2]\setminus U$.  Then $X$ is compact and $||\pi(w(x\alpha)||$ is nonzero for $x\in X$.  
Thus there exists a positive real number $c_1$ such that 
$$
||\pi(w(x\alpha))|| >  c_1
$$ for all $x\in X$.  
Then we infer from (\ref{eq: minind}) that 
\begin{eqnarray*}
||\pi(w(t_n\alpha ))|| &= & ||\pi(A(t_n\alpha)A(t_{n-1}\alpha)\cdots A(t_1\alpha)(w(t_0\alpha))||\\ \\
&\ge & (c_0/2)^n||\pi(w(t_0\alpha)|| \\ \\
& > &  c_1(c_0/2)^n \, .
\end{eqnarray*}
Furthermore,  since the projection $\pi$ is continuous, there is some positive real number $c_2$ such that
$||\pi(u)||<c_2||u||$ for all $u\in \mathbb{C}^d$.
Thus 
$$
||w(t_n)||\ge c_2^{-1}||\pi(w(t_n))|| > c_2^{-1}c_1(c_0/2)^n
$$ 
for all $n\geq 1$.  

On the other hand, since 
$$
\lim_{a\to 0^+} \frac{t_0^a - 1}{a}= \log(t_0) < 0\, ,
$$ 
there exists some $\varepsilon_0\in (0,1)$ such that 
$$
t_0^{a} < 1+a\log(t_0)/2 < 1-a(1-t_0)/2 
$$ 
for $a\in (0,\varepsilon_0)$.  Thus if $n$ is large enough, say $n\geq n_0$, then $k^n>1/\varepsilon_0$ and  we have
$t_n = (t_0)^{1/k^n} <1-(1-t_0)/(2k^n)$. Hence $k^n >(1-t_0)/(2(1-t_n))$. 
 Then we have
\begin{eqnarray*}
||w(t_n\alpha)|| &> & c_2^{-1}c_1(c_0/2)^n \\ \\
&=& c_2^{-1}c_1 k^{n\log_k (c_0/2)}  \\ \\
&\ge & \left( c_2^{-1}c_1 \left(\frac{(1-t_0)}{2}\right)^{\log_k(c_0/2)} \right) (1-t_n)^{-\log_k(c_0/2)}.
\end{eqnarray*} 
Thus if we take $C:= -2\log_{k}(c_0/2)>0$, the fact that $t_n$ tends to $1$ as $n$ tends to infinity implies the existence of 
a positive integer $n_1\geq n_0$ such that 
$$
||w(t_n\alpha)|| > (1-t_n)^{C} \, ,
$$
for all $n\geq n_1$. Taking $S:= \left\{t_n \in (0,1) \mid n\geq n_1\right\}$, 
we obtain the desired result.
\end{proof}

\begin{lem}\label{lem: B} 
Let  
$B:  \overline{B(0,1)}\to M_d(\mathbb{C})$ be a continuous matrix-valued function 
whose entries are analytic inside the unit disc and continuous on the closed unit disc.     
Let us assume that there exist two positive real numbers $\varepsilon$ and $M$ such that 
 $|\det(B(x))|> (1-|x|)^M$ for every $x$ such that $1-\varepsilon<\vert x\vert <1$. 
Then there exists a positive real number $C$  such that 
for every column vector $u$ of norm $1$, 
we have 
$$
||B(x)(u)||\ge (1-|x|)^{C}
$$ 
for every  $x$ such that $1-\varepsilon<\vert x\vert <1$.
\end{lem}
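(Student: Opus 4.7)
The plan is to derive a lower bound on $\|B(x)u\|$ from an upper bound on $\|B(x)^{-1}\|$. For any unit vector $u$ and any invertible $B(x)$, the identity $u = B(x)^{-1}B(x)u$ gives $1 = \|u\| \le \|B(x)^{-1}\|\cdot\|B(x)u\|$ (for the operator norm associated with our vector norm), so $\|B(x)u\| \ge \|B(x)^{-1}\|^{-1}$. The game is thus reduced to bounding $\|B(x)^{-1}\|$ from above in terms of $1-|x|$.

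To estimate $\|B(x)^{-1}\|$, I would apply Cramer's rule: $B(x)^{-1} = \det(B(x))^{-1}\,\mathrm{adj}(B(x))$. The entries of the adjugate are universal polynomials of degree $d-1$ in the entries of $B(x)$, which are continuous functions on the compact set $\overline{B(0,1)}$ by hypothesis. Hence there is a constant $M' > 0$ depending only on $B$ such that $\|\mathrm{adj}(B(x))\| \le M'$ throughout $\overline{B(0,1)}$. Combined with the hypothesis $|\det B(x)| > (1-|x|)^{M}$ on the annulus $1-\varepsilon < |x| < 1$, this yields $\|B(x)^{-1}\| \le M'(1-|x|)^{-M}$, and therefore $\|B(x)u\| \ge (1-|x|)^{M}/M'$ for every unit vector $u$ and every $x$ in the annulus.

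It only remains to absorb the constant $M'$ into the exponent. After shrinking $\varepsilon$ if necessary I may assume $\varepsilon < 1$; this does not weaken the statement since the conclusion on a smaller annulus extends vacuously. I then choose an integer $C \ge M$ large enough that $\varepsilon^{C-M} \le 1/M'$, which is possible because $\varepsilon < 1$. For any $x$ with $1-\varepsilon < |x| < 1$ we then have $1-|x| < \varepsilon$, so $(1-|x|)^{C-M} \le \varepsilon^{C-M} \le 1/M'$, which gives $(1-|x|)^{C} \le (1-|x|)^{M}/M' \le \|B(x)u\|$, as desired. The argument is essentially classical linear algebra; the only mild point of care is packaging the pointwise determinant hypothesis into a uniform inverse-norm bound via the adjugate. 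I do not foresee any serious obstacle.
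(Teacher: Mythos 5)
Your proof is correct and uses essentially the same argument as the paper: write $B(x)^{-1}$ via the adjugate (Cramer's rule), bound the adjugate entries uniformly by continuity on the compact disc, combine with the determinant lower bound to get $\|B(x)^{-1}\| \le \kappa(1-|x|)^{-M}$, and absorb the constant $\kappa$ into the exponent. You are somewhat more explicit than the paper about the operator-norm inequality $\|B(x)u\| \ge \|B(x)^{-1}\|^{-1}$ and about the constant-absorption step requiring $1-|x|<1$, but the underlying idea is identical.
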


\begin{proof} 
Our assumption implies that $B(x)$ is invertible for every $x$ such that $1-\varepsilon<\vert x\vert <1$. 
Let $\Delta(x)$ denote the determinant of $B(x)$.  Using the classical adjoint formula for the inverse of $B(x)$, 
we see that $B(x)^{-1}$ has entries $c_{i,j}(x)$ that have the property that they are expressible (up to sign) as the ratio of 
the determinant of a submatrix of $B(x)$ and $\Delta(x)$.  
Since the entries of $B(x)$ are continuous on $\overline{B(0,1)}$, 
each determinant of a submatrix of $B(x)$ is also continuous.  
By compactness, we see that there is a positive real number $\kappa$ 
such that  
$$
|c_{i,j}(x)| \le \kappa/|\Delta(x)| \le \kappa (1 -|x|)^{-M}
$$ 
for every $(i,j)\in\{1,\ldots,d\}^2$ and every $x$ such that $1-\varepsilon<\vert x\vert <1$.    
Thus there exists a positive real number $C$ such that 
$$
\Vert B(x)^{-1}\Vert \le (1-\vert x \vert)^{-C}
$$ 
for every $x$ such that $1-\varepsilon<\vert x\vert <1$. 
It follows that if $u$ is a vector of norm $1$, then 
$$
\Vert B(x)(u) \Vert\ge (1-|x|)^{C}.
$$ 
for every $x$ such that $1-\varepsilon<\vert x\vert <1$.  The result follows.
\end{proof}

\begin{cor}
\label{cor: kappa}
Let $d$ and $k$ be two natural numbers, $\alpha$  be a root of unity such that $\alpha^{k}=\alpha$, $\zeta$ be 
a root on unity such that $\zeta^{k^j}=1$ for some natural number $j$, and  
$A:\overline{B(0,1)}\to M_d(\mathbb{C})$  be a continuous matrix-valued function.   
Let us assume that $w(x)\in \mathbb{C}[[x]]^d$ satisfies the equation 
$$
w(x)=A(x)w(x^k)
$$ 
for all $x\in B(0,1)$. Let us also assume that the following properties hold. 

\medskip

\begin{itemize}
 
 \item[(i)] The coordinates of $w(x)$ are analytic in $B(0,1)$ and continuous on  $\overline{B(0,1)}$.
 
 \medskip
 
 \item[(ii)] The matrix $A(\alpha)$ is not nilpotent. 
 
\medskip

\item[(ii)] There exists two positive real numbers $\varepsilon$ and $M$ such that  
$|\det(A(x))|>(1-|x|)^M$ for every 
$x$ with $1-\varepsilon<\vert x\vert <1$.

\medskip

\item[(iv)] The set $\left\{w(x)~\mid~x\in  B(0,1)\right\}$ is not contained in a proper subspace 
of $\mathbb{C}^d$. 

\end{itemize}

\medskip

Then there exist a positive real number $C$ and a subset $S\subseteq (0,1)$ that has $1$ as a limit point 
such that
$$
||w(t\alpha\beta)|| > (1-t)^{C}
$$ 
for all $t\in S$.
\end{cor}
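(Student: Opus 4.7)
The plan is to reduce the statement to Lemma \ref{lem: kappa}, after iterating the functional equation $j$ times and absorbing the extra transfer matrix by means of Lemma \ref{lem: B}. The key algebraic identity is that, since $\alpha^{k}=\alpha$ and $\zeta^{k^{j}}=1$, one has $(t\alpha\zeta)^{k^{j}}=t^{k^{j}}\alpha$ for every real $t>0$; the $j$-fold iteration of $x\mapsto x^{k}$ thus sends $t\alpha\zeta$ back to the ray through $\alpha$ already handled by Lemma \ref{lem: kappa}. Iterating $w(x)=A(x)w(x^{k})$ exactly $j$ times yields $w(x)=B(x)w(x^{k^{j}})$, with
\[
B(x):=A(x)\,A(x^{k})\cdots A(x^{k^{j-1}}),
\]
so that specializing at $x=t\alpha\zeta$ gives the key identity $w(t\alpha\zeta)=B(t\alpha\zeta)\,w(t^{k^{j}}\alpha)$.

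I would then apply Lemma \ref{lem: kappa} to the single relation $w(x)=B(x)w(x^{k^{j}})$ at the root of unity $\alpha$: the identity $\alpha^{k^{j}}=\alpha$ holds, $B$ is continuous on $\overline{B(0,1)}$, the analyticity and span conditions on $w$ are intrinsic to $w$, and $B(\alpha)=A(\alpha)^{j}$ is not nilpotent since $A(\alpha)$ is not. The lemma then produces a positive real number $C_{1}$ and a set $S'\subseteq(0,1)$ accumulating at $1$ such that $\|w(s\alpha)\|>(1-s)^{C_{1}}$ for every $s\in S'$.

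Next, I would apply Lemma \ref{lem: B} to the matrix-valued function $B$ itself. Since $\det B(x)=\prod_{i=0}^{j-1}\det A(x^{k^{i}})$ and $|x^{k^{i}}|\le|x|$ for $|x|<1$, hypothesis (iii) of the corollary upgrades to $|\det B(x)|>(1-|x|)^{Mj}$ on a suitable annulus near the unit circle. Lemma \ref{lem: B} then provides a constant $C_{2}>0$ such that $\|B(x)(u)\|\ge(1-|x|)^{C_{2}}\|u\|$ for every vector $u$ and every $x$ close enough to the unit circle. Applied at $x=t\alpha\zeta$, where $|x|=t$, and with $u=w(t^{k^{j}}\alpha)$, this gives $\|w(t\alpha\zeta)\|\ge(1-t)^{C_{2}}\|w(t^{k^{j}}\alpha)\|$.

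Finally, I would take $S:=\{\,t\in(0,1) : t^{k^{j}}\in S'\,\}$, which still accumulates at $1$ because $S'$ does, and restrict further if necessary so that all neighbourhood conditions are satisfied. Using the elementary inequality $(1-t^{k^{j}})^{C_{1}}\ge(1-t)^{C_{1}}$, valid for $t\in(0,1)$ and $C_{1}>0$, the two estimates multiply to give $\|w(t\alpha\zeta)\|>(1-t)^{C_{1}+C_{2}}$ for every $t\in S$, which is the desired conclusion (the symbol $\beta$ in the displayed bound of the statement clearly being a typographical slip for $\zeta$). The main obstacle is really just bookkeeping: ensuring that the three neighbourhoods of $1$ produced by Lemma \ref{lem: kappa}, Lemma \ref{lem: B}, and the monotonicity $1-|x|^{k^{i}}\ge 1-|x|$ intersect into a common annulus, and that the resulting restriction of $S$ still clusters at $1$; no single step looks delicate once the iterated equation has been written down.
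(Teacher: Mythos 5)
Your proposal is correct and follows essentially the same route as the paper's proof: invoke Lemma \ref{lem: kappa} to get a lower bound for $\|w\|$ along the ray through $\alpha$, then use the determinant hypothesis and Lemma \ref{lem: B} to push that bound through the finite chain of transfer matrices joining $t\alpha\zeta$ to $t^{k^j}\alpha$. The only cosmetic differences are that you feed Lemma \ref{lem: kappa} the $j$-fold iterated equation $w(x)=B(x)w(x^{k^j})$ rather than the original one (harmless, since $\alpha^{k^j}=\alpha$ and $B(\alpha)=A(\alpha)^j$ is not nilpotent), and you close the estimate with the clean monotone inequality $1-t^{k^j}\ge 1-t$ instead of the asymptotic $(1-t_n)/(1-s_n)\to k^j$ used in the paper.
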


\begin{proof} 
Since $A(\alpha)$ is not nilpotent, we first infer from Lemma \ref{lem: kappa} that there exist a positive real number 
$C_0$ and a sequence $t_n\in (0,1)$, which tends to $1$, such that
$||w(t_n\alpha)||> (1-t_n)^{C_0}$ for every integer $n\ge 1$.  Let $s_n\in (0,1)$ be such that $s_n^{k^j}=t_n$.
Then 
$$
w(s_n\alpha\beta)=A(s_n\alpha\beta)A(s_{n}^k\alpha\beta^k)\cdots A(s_{n}^{k^{j-1}}\alpha\beta^{k^{j-1}})(w(t_n\alpha))\, .
$$
By assumption there exists a positive real number $M$ such that 
 $|\det(A(x))|>(1-|x|)^M$ for every 
$x$ with $1-\varepsilon<\vert x\vert <1$. 
Set 
$$
B(x):=A(x\alpha\beta)A(x^k\alpha\beta^k)\cdots A(x^{k^{j-1}}\alpha\beta^{k^{j-1}}) \, .
$$  
Then  there is a positive real number $C_1$ such that if $(1-\varepsilon)^{1/k^{j-1}}<|x|<1$ then
$$
\det(B(x))> (1-|x|)^M\cdots (1-|x|^{k^{j-1}})^M \geq (1-|x|)^{jM} \, .
$$ 
It follows from Lemma \ref{lem: B} that there exists a positive real number $C_1$ such that for $n$ sufficiently large we have
\begin{eqnarray*}
||w(s_n\alpha\beta)|| & = & ||B(s_n)(w(t_n\alpha))|| > (1-s_n)^{C_1}||w(t_n\alpha)|| \\
&>& (1-s_n)^{C_1}(1-t_n)^{C_0} \, .
\end{eqnarray*}
Since $(1-t_n)/(1-s_n)\to k^j$ as $n\to \infty$, we see that if we take 
$C:=2(C_1+C_0)$ then we have
$$
||w(s_n\alpha\beta)||\ge  (1-s_n)^{C}
$$ 
for all $n$ sufficiently large.  The result follows.
\end{proof}


We are now almost ready to prove the main result of this section.   
Before doing this, we give the  following simple lemma.

\begin{lem} \label{lem: nilp} Let $d$ be a natural number and let $A$ be a $d\times d$ complex matrix whose $(i,j)$-entry is 
$\delta_{i,j+1}$ if $i\ge 2$.  If there is an integer $r$ such that the $(1,r)$-entry of of $A$ is nonzero, then $A$ is not nilpotent.
\end{lem}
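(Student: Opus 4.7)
The plan is to observe that $A$ is (the transpose of) a companion-type matrix and simply to compute its characteristic polynomial. Writing $a_j$ for the $(1,j)$-entry of $A$, the hypothesis says
\[
A \;=\; \begin{pmatrix} a_1 & a_2 & a_3 & \cdots & a_d \\ 1 & 0 & 0 & \cdots & 0 \\ 0 & 1 & 0 & \cdots & 0 \\ \vdots & & \ddots & & \vdots \\ 0 & 0 & \cdots & 1 & 0 \end{pmatrix}.
\]

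I would then expand $\det(\lambda I - A)$ along the first row (or equivalently proceed by induction on $d$, stripping off the top row and leftmost column at each step). A direct cofactor expansion yields the closed form
\[
\det(\lambda I - A) \;=\; \lambda^d - a_1\lambda^{d-1} - a_2\lambda^{d-2} - \cdots - a_{d-1}\lambda - a_d,
\]
because each minor obtained by deleting the first row and the $j$-th column is triangular with the only nontrivial contribution producing $\lambda^{d-j}$ (up to the appropriate sign that cancels with the $(-1)^{1+j}$ in the expansion).

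Now if $A$ were nilpotent, all its eigenvalues would be zero, so its characteristic polynomial would have to equal $\lambda^d$. Matching coefficients in the explicit formula above forces $a_1 = a_2 = \cdots = a_d = 0$. Contrapositively, the assumption that some $(1,r)$-entry $a_r$ is nonzero implies $A$ is not nilpotent. The argument is purely formal; there is no real obstacle beyond carrying out the cofactor expansion correctly.
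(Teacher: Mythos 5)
Your proof is essentially the same as the paper's: both identify $A$ as a companion-type matrix with characteristic polynomial $\lambda^d - a_1\lambda^{d-1} - \cdots - a_d$ and note that nilpotence would force this to be $\lambda^d$, hence all $a_j=0$. You simply spell out the cofactor expansion that the paper cites as "the theory of companion matrices," so the argument is correct and matches the paper's route.
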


\begin{proof} Let $(a_1,\ldots ,a_d)$ denote the first row of $A$.  Then by the theory of companion matrices, $A$ 
has characteristic polynomial $x^d-a_1 x^{d-1}-a_2 x^{d-2}-\cdots - a_d$.  But if $A$ is nilpotent, its characteristic 
polynomial must be $x^d$ and hence the first row of $A$ must be zero.
\end{proof}


\subsection{Proof of Theorem \ref{thm: elim}}

We are now ready to prove the main result of this section. 

\begin{proof}[Proof of Theorem \ref{thm: elim}] Consider the set $I$ of all polynomials $P(x)\in\mathbb C[x]$ for which 
there exist positive integers $a$ and $b$ with $a<b$ such that
$$
P(x)F(x)\in \sum_{i=a}^b \mathbb{C}[x]F(x^{k^i}) \,.
$$
We note that $I$ is an ideal of $\mathbb C[x]$. Let $P_0(x)$ be a generator for $I$.  
Let us assume that $\alpha$ is a root of $P_0(x)$ with the property that $\alpha^{k^i}=\alpha$ 
for some positive integer $i$.  We will obtain a contradiction from this assumption. 

\medskip

Since $F(x)$ is $k$-Mahler, it is also $k^{i}$-Mahler and hence $F(x)$ satisfies a non-trivial polynomial equation
$$\sum_{j=0}^d Q_j(x) F(x^{k^{ij}}) \ = \ 0$$ with $Q_0,\ldots ,Q_d$ polynomials.  
We pick such a nontrivial relation with $Q_0$ nonzero and the degree of $Q_0$ minimal. 
By assumption $P_0$ divides $Q_0$ and so $\alpha$ is a root is of $Q_0(x)$.  
Also, we may assume that for some integer $j$, $0<j\leq d$, we have $Q_j(\alpha)\neq 0$. Indeed, otherwise 
we could divide our equation by $(x-\alpha)$ to get a new relation with a new $Q_0$ of smaller degree.

\medskip

By Lemma \ref{lem: gettingridofzeros}, there exists some natural number $N$ such that $F(x)$ can be decomposed as 
$F(x)=T(x)+x^NF_0(x)$, where $T(x)$ is a polynomial of degree $N-1$ and $F_0(x)$ is a power series with nonzero 
constant term such that 
$F_0(x)$ satisfies a $k^i$-Mahler equation
\begin{equation}
\label{eq: tilde}
\sum_{j=0}^e \widetilde{Q}_j(x) F_0(x^{k^{ij}}) \ = \ 0
\end{equation}
 with $\widetilde{Q}_0(0)=1$, $\widetilde{Q}_0(\alpha)=0$ and $\widetilde{Q}_j(\alpha)\neq 0$ for some integer $j$, $0<j\leq e$.  
 Moreover, by picking $N$ sufficiently large, we may assume that $F_0(x)$ satisfies a 
 nontrivial $\ell$-Mahler equation 
 $$
 \sum_{j=0}^f R_j(x) F_0(x^{\ell^{j}}) \ = \ 0
 $$ 
for some polynomials $R_j(x)$ with $R_0(0)=1$.   
Now, we infer from Proposition \ref{rem: decomp} that there is some $\ell$-Becker power series 
$G(x)$ such that 
\begin{equation}\label{eq: M}
F_0(x)= \left(\prod_{j=0}^{\infty} R_0(x^{\ell^j})\right)^{-1}G(x) \, .
\end{equation}
 
For $i=0,\ldots ,e$, we let $c_i$ denote the order of vanishing of $\widetilde{Q}_i(x)$ at $\alpha$, with the convention that 
$c_i=\infty$ if $\widetilde{Q}_i(x)=0$.  We note that $0<c_0<\infty$ and that there is some $j$, $0<j\leq e$, 
such that $c_j=0<c_0$. 
Let 
\begin{equation}\label{eq: defb}
b:= \max \left\{\frac{c_0-c_j}{j}  \mid j=1,\ldots ,d\right\}\, .
\end{equation}
Since at least one of $c_1,\ldots ,c_d$ is strictly less than $c_0$, we have that $b$ is positive.  
 Moreover, by definition there is some $j_0\in\{1,\ldots,d\}$ such that $c_{j_0}+bj_{0}-c_0=0$. 
Then, for $j\in \{0,\ldots ,d\}$, we set 
\begin{equation}\label{eq: Sj} 
S_j(x) := \widetilde{Q}_j(x)\left(\prod_{n=0}^{j-1} (1-\alpha^{-1} x^{k^{in}})^b \right) (1-\alpha^{-1} x)^{-c_0} \, . 
\end{equation}
Note that (\ref{eq: defb}) implies that $S_0(x)$ is 
a polynomial in $\mathbb C[x]$ such that $S_0(0)=1$ and $S_0(\alpha)\not=0$. 

\medskip
 
Now, we set   
\begin{equation}
\label{eq: Ldef}
L(x) := F_0(x)  \prod_{j=0}^{\infty} S_0(x^{k^{ij}}) \prod_{j=0}^{\infty} (1-\alpha^{-1} x^{k^{ij}})^b 
\end{equation}  
and we infer from (\ref{eq: M}) that 
\begin{equation}
\label{eq: Bigone} L(x)\prod_{j=0}^{\infty} (1-\alpha^{-1} x^{k^{ij}})^{-b} \prod_{j= 0}^{\infty} R_0(x^{\ell^j}) \ = \ 
G(x) \prod_{j=0}^{\infty} S_0(x^{k^{ij}}) \, .
\end{equation}  
In the sequel, we are going to obtain some asymptotic estimates for the quantities $G(x)$, $\displaystyle\prod_{j\ge 0} R_0(x^{\ell^j})$, 
$\displaystyle\prod_{j\geq 0} S_0(x^{k^{ij}})$, $L(x)$ and $\displaystyle\prod_{j\geq 0} (1-\alpha^{-1} x^{k^{ij}})^{-b}$ in 
a neighbourhood of some root of unity. 
We will then show that these estimates are not compatible with Equality (\ref{eq: Bigone}), providing the desired contradiction.

\medskip

\subsubsection*{ Estimate for $G(x)$} 
Note first that, since $G(x)$ is a $\ell$-Becker power series, Theorem \ref{thm: beck} implies that $G(x)$ is 
$\ell$-regular. By Proposition \ref{thm: analyticunitdisc}, there exist two positive real numbers $C$ and $m$ such that 
$$
|G(x)| <   C(1-|x|)^{-m} \, ,
$$
for even complex number $x$ in the open unit disc. 
This implies that there exist two positive real numbers $A_0$ and $\varepsilon_0$ such that
\begin{equation}
\label{eq: MM}
|G(x)| <   (1-|x|)^{-A_0}
\end{equation}
 for every complex number $x$ with $1-\varepsilon_0< 1-\vert x \vert < 1$.

\medskip

\subsubsection*{ Asymptotic estimate for  $\prod_{j\ge 0} R_0(x^{\ell^j})$} 
By assumption there is a prime $p$ that divides $k$ and does not divide $\ell$. Thus 
there exists some positive integer $N_0$ such that whenever $\zeta$ is a primitive $p^n$-th root 
of unity with $n\ge N_0$, then we have $R_0((\alpha\zeta)^{\ell^j})$ is nonzero for every nonnegative integer $j$.

Let $\zeta$ be such a primitive $p^n$-th root of unity with $n\ge N_0$.  Then
there exist two positive integers $n_1$ and $n_2$, $n_1<n_2$, such that
\begin{equation}\label{eq: n2n1}
(\alpha\zeta)^{\ell^{n_1}}=(\alpha\zeta)^{\ell^{n_2}} \, .
\end{equation}
Then for $t\in (0,1)$ we have
$$
\prod_{j= 0}^{\infty} R_0((t\alpha\zeta)^{\ell^j}) = \prod_{j=0}^{n_1-1}R_0((t\alpha\zeta)^{\ell^j}) 
\prod_{i=n_1}^{n_2-1} \prod_{j= 0}^{\infty} R_0(((t\alpha\zeta)^{\ell^{i}})^{\ell^{j(n_2-n_1)}}) \, .
$$
Note that $\prod_{j=0}^{n_1-1}R_0(x)$ 
is a polynomial that does not vanish at any point of the finite set $\left\{(\alpha\zeta)^{\ell^j}) \mid j\geq 0\right\}$.  
This gives that there exist two positive real numbers $\delta$ and $\varepsilon_1$ such that 
$$
\left\vert \prod_{j=0}^{n_1-1}R_0(t\alpha\zeta)^{\ell^j})\right\vert > \delta \, ,
$$
for all $t\in (1-\varepsilon_1,1)$. 
Furthermore, Equality (\ref{eq: n2n1}) implies that for every integer $i$, $n_1\leq i \leq n_2-1$, we have
$$
((\alpha\zeta)^{\ell^{i}})^{\ell^{j(n_2-n_1)}} = ((\alpha\zeta)^{\ell^{i}}) \, .
$$ 
Thus, for every integer $i$, $n_1\leq i \leq n_2-1$, we can apply Corollary \ref{cor: mog} to the infinite product 
$$
 \prod_{j= 0}^{\infty} R_0(((t\alpha\zeta)^{\ell^{i}})^{\ell^{j(n_2-n_1)}}) \, .
$$
This finally implies the existence of 
a positive real number $\varepsilon_2=\varepsilon_2(\zeta)$ and a positive integer $A_1= A_1(\zeta)$ 
such that
\begin{equation}
\label{eq: SS}
\left| \prod_{j= 0}^{\infty} R_0((t\alpha\zeta)^{\ell^j})\right|> (1-t)^{A_1}
\end{equation}
for $t\in (1-\varepsilon_2,1)$.

\medskip

\subsubsection*{ Asymptotic estimate for $\prod_{j\ge 0} S_0(x^{k^j})$} 
First note that since $\alpha^k=\alpha$, $S_0(0)=1$ and $\alpha$ is not a root of $S_0$, 
we can apply Corollary \ref{cor: mog}. We thus obtain the existence of a positive real number $\delta_0$ and 
a positive integer $M_0$ such that  
\begin{equation}\label{eq: M0}
\left \vert \prod_{j= 0}^{\infty} S_0( (t\alpha)^{k^{ij}}) \right \vert < (1-t)^{M_0}
\end{equation}
for every $t\in (1-\delta_0,1)$. 

Now, if $\zeta$ is a primitive $p^n$-th root of unity, for some positive integer $n$, 
we have $(\alpha\zeta)^{k^{ij}}=\alpha$ for all $j\ge n$. This implies that
\begin{equation}\label{eq: RS}
\prod_{j= 0}^{\infty} S_0((t\alpha\zeta)^{k^{ij}}) = R(t)\prod_{j= 0}^{\infty} S_0((t\alpha)^{k^{ij}}) \, ,
\end{equation}
where 
$$
R(t)= \left(\prod_{j=0}^{n-1} S_0((t\alpha\zeta)^{k^{ij}}) \right) 
\left(\prod_{j=0}^{n-1} S_0((t\alpha)^{k^{ij}})\right)^{-1} \, .
$$  
Since $\alpha^{k^{ij}}=\alpha$ and $S_0(\alpha)\not=0$, there are two positive real number $\delta_1$ and $C$ such that 
\begin{equation}\label{eq: R}
\vert R(t)\vert < C
\end{equation} 
for every $t\in (1-\delta_1,1)$.  
We thus infer from (\ref{eq: M0}),  (\ref{eq: RS}) and  (\ref{eq: R}) that there exist a positive real number 
$\varepsilon_3=\varepsilon_3(\zeta)$ and a positive integer $A_2=A_2(\zeta)$ such that
\begin{equation}
\label{eq: TT}
\left| \prod_{j= 0}^{\infty} S_0((t\alpha\zeta)^{\ell^j})\right|< (1-t)^{-A_2}
\end{equation}
for $t\in (1-\varepsilon_3,1)$.

\medskip

\subsubsection*{ Asymptotic estimate for $L(x)$}

We first infer from (\ref{eq: tilde}) and (\ref{eq: Ldef}) that the function $L$ satisfies the following relation: 
$$
\sum_{n=0}^e \widetilde{Q}_n(x)  \left(\prod_{j= n}^{\infty}S_0(x^{k^{ij}})^{-1} \right)
\left(\prod_{j= n}^{\infty} (1-\alpha^{-1}x^{k^{ij}})^{-b}\right)  L(x^{k^{in}}) = 0 \, ,
$$
which gives by (\ref{eq: Sj} ) that 
\begin{eqnarray*}
L(x) &= &- \sum_{n=1}^{e} \left ( 
 \widetilde{Q}_n(x) \widetilde{Q}_0(x)^{-1}  \prod_{j=0}^{n-1} S_0(x^{k^{ij}}) 
\prod_{j = 0}^{n-1} (1-\alpha^{-1}x^{k^{ij}})^b \right)   L(x^{k^{in}}) \\ \\
&=&  \left (  \widetilde{Q}_n(x)  \left(\prod_{j= 0}^{n-1} (1-\alpha^{-1}x^{k^{ij}})^b\right)  (1-\alpha^{-1}x)^{-c_0} S_0(x)^{-1}  
\prod_{j=0}^{n-1} S_0(x^{k^{ij}})   \right)   L(x^{k^{in}})\\  \\
& = & - \sum_{n=1}^{e}  \left(S_n(x)  \prod_{j=1}^{n-1} S_0(x^{k^{ij}}) \right)
   L(x^{k^{in}}) \, .
\end{eqnarray*} 
Let $A(x)$ denote the $e\times e$ matrix whose $(i,j)$-entry is $\delta_{i,j+1}$ if $i\ge 2$ and whose 
$(1,j)$-entry is
$$
C_j(x) := -  S_n(x)  \prod_{j=1}^{n-1} S_0(x^{k^{ij}}) 
$$ 
for $j=1,\ldots ,e$.  
Then the previous computation gives us the following functional equation:
\begin{equation}
\label{eq: LL}
[L(x), L(x^{k^i}),\ldots , L(x^{k^{i(e-1)}})]^T \ = \ A(x)[L(x^{k^i}),\ldots , L(x^{k^{ie}})]^T \, .
\end{equation}

\medskip

We claim that if $\zeta$ is a primitive $p^n$-th root of unity with $n\ge N_0+i(e-1)\nu_p(k)$, 
then there exist a positive integer $M_0= M_0(\zeta)$ and an infinite sequence $(t_n)_{n\geq 0}\in (0,1)^{\mathbb N}$ 
which tends to $1$ such that 
\begin{equation}\label{eq: minL1}
|| \, [L(t_n\alpha\zeta), L(t_n^{k^i}\alpha\zeta^{k^i}),\ldots , L(t_n^{k^{i(e-1)}}\alpha\zeta^{k^{i(e-1)}})]^T
\, ||> (1-t_n)^{M_0} \, .
\end{equation}

\medskip

In order to obtain Inequality (\ref{eq: minL1}) it remains to prove that we can apply Corollary \ref{cor: kappa} to $L(x)$.  
Note that $L(x)$ is not identically zero since $F(x)$ is not a polynomial. Furthermore, 
we can assume that $L$ is not a nonzero constant since otherwise Inequality (\ref{eq: minL1}) would be immediately satisfied. 

\medskip

\begin{itemize}

\item[(i)] By definition,  $S_n(x) = \widetilde{Q}_n(x) \left(\prod_{j=0}^{n-1} (1-\alpha^{-1} x^{k^{ij}})^b \right) 
(1-\alpha^{-1} x)^{-c_0}$. Moreover, a simple computation gives that 
$\prod_{j=0}^{n-1} (1-\alpha^{-1} x^{k^{ij}})^b =  (1-\alpha^{-1} x)^{bn} P_n(x)^b$, 
for some polynomial $P_n(x)$ that does not vanish at $\alpha$.  By definition of $c_n$, this shows that
\begin{equation}\label{eq: pr}
S_n(x) = (1-\alpha^{-1} x)^{c_n+bn-c_0}P_n(x)^bR_n(x) \, ,
\end{equation} 
where $P_n(x)$ and $R_n(x)$ are two polynomials that do not vanish at $\alpha$. 
By definition of $b$, we have $c_n+bn-c_0\geq 0$ for $n\in\{0,\ldots,e\}$, and 
thus $S_n(x)$ is analytic in the open unit disc and continuous on the closed unit disc.  
Since the finite product $\prod_{j=1}^{n-1} S_0(x^{k^{ij}})$ 
is a polynomial, this shows that the entries of the matrix $A(x)$ are analytic on 
$B(0,1)$ and continuous on $\overline{B(0,1)}$.

\medskip

\item[(ii)]  The definition of $b$ implies that there is some integer $r$, 
$1\leq r\leq e$, such that $c_r+br -c_0=0$. Since $P_r(\alpha)R_r(\alpha)\not=0$, 
Equation (\ref{eq: pr}) implies that $S_r(\alpha)\not=0$. On the other hand, we have that $\prod_{j=0}^{r-1} S_0(x^{k^{ij}})$ 
does not vanish at $\alpha$ since $S_0(\alpha)\not=0$ and $\alpha^{k^i}=\alpha$. 
We thus obtain that the 
$(1,r)$-entry of $A(\alpha)$ is nonzero.  
By Lemma \ref{lem: nilp}, this implies that $A(\alpha)$ is not nilpotent.

\medskip

\item[(iii)] By definition of the matrix $A$, we get that 
$$
\det A(x) = (-1)^{e} C_e(x)=  (-1)^{e+1} S_e(x) \prod_{n=1}^{e-1} S_0(x^{k^{in}}) \, .
$$
By (\ref{eq: pr}), we have that $S_e(x)= (1-\alpha^{-1} x)^{c_e+be-c_0} P_e(x)^bR_e(x)$, where $P_e(x)$ and $R_e(x)$ 
are polynomials.  
It follows that 
there exist two positive real numbers $\delta$ and $M$ such that
\begin{equation*}
|\det A(x)| >  (1-|x|)^{M}
\end{equation*}
for every $x$ such that $1-\delta < \vert x\vert <1$. 

\medskip

\item[(iv)]   We claim that 
$$
\left\{[L(x), L(x^{k^i}),\ldots , L(x^{k^{i(e-1)}})]^T~\mid~x\in B(0,1)\right\}
$$ 
cannot be contained in a proper subspace of $\mathbb{C}^e$.  Indeed, if it were, then there would exist some nonzero row 
vector $u$ such that
$$
u[L(x), L(x^{k^i}),\ldots , L(x^{k^{i(e-1)}})]^T \ = \ 0
$$ for all $x\in B(0,1)$.  But this would give that
$L(x),\ldots ,L(x^{k^{i(e-1)}})$ are linearly dependent over $\mathbb{C}$, and hence by Lemma \ref{lem: constant}, we would obtain 
that $L(x)$ is a constant function, a contradiction.  

\end{itemize}

\medskip

It follows from (i), (ii), (iii) and (iv) that we can apply Corollary \ref{cor: kappa} to $L(x)$, which proves that (\ref{eq: minL1}) holds. 
Then, we deduce from  (\ref{eq: minL1}) that there exist a sequence $(s_n)_{n\geq 0}$ in $(0,1)$ which tends to $1$, 
some root of unity $\mu$ that has order at least $p^{N_0}$ and some positive integer $A_3=A_3(\zeta)$  such that
\begin{equation} \label{eq: LLL}
|L(s_n \alpha\mu)|> (1-s_n)^{A_3}
\end{equation} for every positive integer $n$.

\medskip

\subsubsection*{ Conclusion} 
By Equation (\ref{eq: Bigone}), we have 
\begin{eqnarray*} 
& & \left|L(s_n\alpha\mu)\prod_{j=0}^{\infty} (1-\alpha^{-1} (s_n\mu)^{k^{ij}})^{-b} \prod_{j= 0}^{\infty} 
 R_0((s_n \alpha\mu )^{\ell^j})\right| \\ 
& = &   
\left|G(s_n\alpha\mu ) \prod_{j=0}^{\infty} S_0(\alpha (s_n\mu)^{k^{ij}})\right|.
\end{eqnarray*}
By Equations (\ref{eq: MM}) and (\ref{eq: TT}), we see that the right-hand side is at most
$$
(1-s_n)^{-(A_0+A_2)}
$$ 
for every integer $n$ large enough.
Similarly, by Equations (\ref{eq: SS}) and (\ref{eq: LLL}), the left-hand side is at least
$$
 (1-s_n)^{A_1+A_3}\prod_{j=0}^{\infty} (1-\alpha^{-1} (s_n\mu)^{k^{ij}})^{-b}
 $$
for every integer $n$ large enough. 
Thus we have 
$$
\left|\prod_{j=0}^{\infty} (1-\alpha^{-1} (s_n\mu)^{k^{ij}})^{-b}\right|
< (1-s_n)^{-(A_0+A_1+A_2+A_3)}
$$
for every integer $n$ large enough.  
But this contradicts Lemma \ref{lem: last}, 
since $\mu^{k^j}=1$ for all sufficiently large $j$.  
This concludes the proof. 
\end{proof}



\section{Existence of prime ideals with special properties}\label{sec: chebotarev}

In this section we prove the following result.
  
\begin{thm} Let $R$ be a principal localization of a number ring and 
let $P(x),Q(x)\in R[x]$ be two polynomials with $P(0)=Q(0)=1$ and such that none of the zeros of $P(x)Q(x)$ 
are roots of unity.  Then there are infinitely many prime ideals $\mathfrak P$ in $R$ such that   
  $$
  \left(\prod_{i=0}^{\infty} P(x^{k^i}) \right)^{-1} \bmod \mathfrak P
  $$
is a $k$-automatic power series in $(R/\mathfrak P)[[x]]$ and 
$$
 \left( \prod_{i=0}^{\infty} Q(x^{{\ell}^i})\right)^{-1} \bmod \mathfrak P 
  $$ 
  is a $\ell$-automatic power series in $(R/\mathfrak P)[[x]]$. 
\label{thm: reduction}
\end{thm}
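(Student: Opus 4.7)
The plan is to apply Chebotarev's density theorem to the splitting field of $P(x)Q(x)$ over $K:=\mathrm{Frac}(R)$, in order to reduce to a situation where every factor $(1-\alpha_i x)$ of $P$ and $(1-\beta_j x)$ of $Q$ becomes linear over the residue field $R/\mathfrak P$ itself. Combined with the observation that every nonzero element of a finite field is a root of unity, this will let us apply Proposition~\ref{prop: omega} one factor at a time; the conclusion will then follow from the closure of regular series under Cauchy product (Proposition~\ref{prop: reg2}(iii)) and the fact that regular series with finitely many coefficient values are automatic (Proposition~\ref{prop: reg2}(ii)).

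I first factor $P(x)=\prod_i(1-\alpha_i x)$ and $Q(x)=\prod_j(1-\beta_j x)$ in $\overline K[x]$. Because $P(0)=Q(0)=1$, every $\alpha_i$ and $\beta_j$ is a root of the monic reciprocal polynomial of $P$ (respectively $Q$), and so is integral over $R$. Let $L:=K(\alpha_i,\beta_j)$, a finite Galois extension of $K$, and let $\mathcal{O}_L$ be the integral closure of $R$ in $L$. After discarding the finitely many primes of $R$ that contain the leading coefficient of $P$ or of $Q$, Chebotarev's density theorem still provides infinitely many unramified primes $\mathfrak P\subset R$ that split completely in $\mathcal{O}_L$. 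For any such $\mathfrak P$ and any prime $\mathfrak Q$ of $\mathcal{O}_L$ above it, $R/\mathfrak P=\mathcal{O}_L/\mathfrak Q=:\mathbb{F}_q$, so each $\bar\alpha_i$ and $\bar\beta_j$ lies in $\mathbb{F}_q^{\times}$, and is therefore a root of unity.

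I would then apply Proposition~\ref{prop: omega} to each $\bar\alpha_i$ (with respect to $k$) and to each $\bar\beta_j$ (with respect to $\ell$) separately. Although the proposition is stated in characteristic zero, its proof is purely algebraic and only requires the existence of an exponent $N\ge 1$ with $\omega^{k^{2N}}=\omega^{k^N}$, equivalently $d\mid k^N(k^N-1)$ where $d=\mathrm{ord}(\omega)$; writing $d=d_1d_2$ with $\gcd(d_1,k)=1$ and every prime factor of $d_2$ dividing $k$, such an $N$ exists because $k$ is a unit modulo $d_1$ and every prime factor of $d_2$ divides $k$. Consequently each $\prod_{j\ge 0}(1-\bar\alpha_i x^{k^j})^{-1}$ is $k$-regular in $\mathbb{F}_q[[x]]$, and similarly each $\prod_{j\ge 0}(1-\bar\beta_j x^{\ell^j})^{-1}$ is $\ell$-regular. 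Proposition~\ref{prop: reg2}(iii) then shows that
\[
\left(\prod_{j\ge 0} P(x^{k^j})\right)^{-1}\!\bmod\mathfrak P=\prod_i\prod_{j\ge 0}(1-\bar\alpha_i x^{k^j})^{-1}
\]
is $k$-regular in $\mathbb{F}_q[[x]]$, and analogously for the product associated to $Q$ and $\ell$. Since $\mathbb{F}_q$ is finite, Proposition~\ref{prop: reg2}(ii) upgrades both to automatic power series.

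The main obstacle is the positive-characteristic adaptation of Proposition~\ref{prop: omega}: its proof transfers verbatim, but one must check carefully the existence of the auxiliary exponent $N$ by decomposing the order of $\omega$ into its part coprime to $k$ and its part supported on primes dividing $k$. A secondary, but useful, technical point---circumvented by demanding \emph{complete} splitting rather than merely avoiding leading coefficients---is that all reductions $\bar\alpha_i,\bar\beta_j$ then lie in $\mathbb{F}_q$ itself, so that Proposition~\ref{prop: omega} applies factor by factor inside $\mathbb{F}_q[[x]]$ and no descent from $\overline{\mathbb{F}_q}$ is required.
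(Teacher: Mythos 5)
Your plan has a genuine gap at the step where you apply Proposition~\ref{prop: omega} to the reductions $\bar\alpha_i,\bar\beta_j$. The hypothesis of that proposition is not merely that $\omega$ is a root of unity: it demands that $\omega^{k^j}\neq\omega$ for \emph{every} $j\ge 1$. This is exactly what makes the argument work: in the proof one sets $\beta:=\omega^{k^N}$ for the smallest $N\ge 1$ with $\omega^{k^{2N}}=\omega^{k^N}$ and then uses that $(1-\omega x)$ cannot divide $(1-\beta x)$ \emph{because $\omega\neq\beta$}. You claim the proof ``transfers verbatim'' and that the only point to check is the existence of the exponent $N$, but the existence of $N$ is automatic (eventual periodicity of $\omega,\omega^k,\omega^{k^2},\dots$ in any finite group); the real hypothesis being used is $\omega\neq\omega^{k^N}$, equivalently that the orbit $\omega,\omega^k,\omega^{k^2},\dots$ is \emph{not} purely periodic, equivalently $\gcd(\operatorname{ord}(\omega),k)>1$.

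Once you reduce modulo a completely split prime $\mathfrak P$, the elements $\bar\alpha_i$ lie in $\mathbb{F}_q^\times$ and are of course roots of unity, but nothing about complete splitting controls their multiplicative orders, and those with order coprime to $k$ satisfy $\bar\alpha_i^{k^j}=\bar\alpha_i$ for some $j\ge 1$, so Proposition~\ref{prop: omega} simply does not apply to them. This is the central difficulty the paper is wrestling with: Lemma~\ref{lem: auto} is the finite-field substitute for Proposition~\ref{prop: omega}, and its hypothesis --- that $1-ax^{k^n}\bmod\mathfrak P$ has no root for some $n$ --- is precisely the statement that $a,a^k,a^{k^2},\dots$ is not purely periodic mod $\mathfrak P$. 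Arranging this \emph{simultaneously} for every $\bar\alpha_i$ with respect to $k$ and every $\bar\beta_j$ with respect to $\ell$ is what the paper's elaborate Chebotarev argument is built to do: Lemma~\ref{lem: power} (using that the $\alpha_i,\beta_j$ are not roots of unity) sets up suitable $p^n$-th and $q^n$-th root extensions, Lemma~\ref{lem: group} produces a single Galois element $\tau$ that moves \emph{all} the chosen roots at once, and Chebotarev applied to the Frobenius class of $\tau$ yields infinitely many $\mathfrak P$ for which $P(x)Q(x)$ splits into linear factors while $P(x^{p^N})Q(x^{q^N})$ has no root mod $\mathfrak P$. Simply requiring complete splitting of $P(x)Q(x)$, as you do, gives no control over this and cannot be repaired by the arguments you outline.
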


Our proof is based on Chebotarev's density theorem for which we refer the reader for example 
to \cite{Lang} and to the informative survey 
\cite{LeSt}.  We first prove a basic lemma about non-existence of 
$n$-th roots of elements in a number field for sufficiently large $n$.  
The proof makes use of the notion of Weil absolute logarithmic height.  
We do not recall the precise definition of Weil height, as it is a bit long and not really within the scope of the present paper.  
However, we are only going to use basic properties of this height that can be found in any standard book such as \cite{HiSi}, \cite{LangH},  
or \cite{Wald}.

 \begin{lem} Let $K$ be a number field and let $\alpha$ be a nonzero element in $K$ that is not a root of unity.   
 Then for all sufficiently large natural numbers $n$ the equation $ \beta^n=\alpha$  has no solution $\beta\in K$. 
 \label{lem: power}
\end{lem}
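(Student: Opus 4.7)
My plan is to apply the Weil absolute logarithmic height $h \colon K \to \mathbb{R}_{\geq 0}$. The approach rests on three classical ingredients: the multiplicativity property $h(\gamma^n) = n\, h(\gamma)$ for $\gamma \in K$ and $n \geq 1$; Kronecker's theorem, which asserts that for $\gamma \in K$ the equality $h(\gamma) = 0$ holds if and only if $\gamma$ is zero or a root of unity; and Northcott's theorem, which asserts that for every real $B > 0$ the set $\{\gamma \in K : h(\gamma) \leq B\}$ is finite.

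Using these, I would argue as follows. Since $\alpha$ is nonzero and not a root of unity, Kronecker's theorem gives $h(\alpha) > 0$. Suppose, for contradiction, that there is an infinite set $\mathcal{N} \subseteq \mathbb{N}$ such that for each $n \in \mathcal{N}$ there exists $\beta_n \in K$ with $\beta_n^n = \alpha$. The multiplicativity property yields
$$
h(\beta_n) \,=\, h(\alpha)/n \,\leq\, h(\alpha),
$$
so all the $\beta_n$ lie in the finite set provided by Northcott's theorem applied with bound $B := h(\alpha)$. By pigeonhole, there exist $n_1 < n_2$ in $\mathcal{N}$ with $\beta_{n_1} = \beta_{n_2} =: \beta$, whence $\beta^{n_2 - n_1} = 1$. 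Thus $\beta$ is a root of unity, and so is $\alpha = \beta^{n_1}$, a contradiction.

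I do not anticipate any real obstacle: the lemma is essentially a packaging of Northcott's finiteness theorem together with the multiplicative behaviour of the height. An alternative route, sidestepping heights at the cost of invoking Dirichlet's $S$-unit theorem, would enlarge $\mathbb{Z}$ to a suitable ring $\mathcal{O}_{K,S}$ so that $\alpha \in \mathcal{O}_{K,S}^{\ast} \cong \mu_K \times \mathbb{Z}^r$, note that any $\beta \in K$ with $\beta^n = \alpha$ must also lie in $\mathcal{O}_{K,S}^{\ast}$, and observe that the $\mathbb{Z}^r$-coordinates of $\alpha$ (at least one of which is nonzero, because $\alpha$ is not a root of unity) would have to be divisible by $n$, which fails as soon as $n$ exceeds the maximum of their absolute values.
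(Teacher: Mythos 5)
Your proof is correct and uses the same essential ingredients as the paper's: the multiplicativity of the Weil height, Northcott's finiteness theorem, and Kronecker's characterization of roots of unity. The only cosmetic difference is that the paper extracts from Northcott a uniform height gap (a positive $\varepsilon$ below which the height must vanish) and concludes directly for each large $n$, whereas you apply Northcott plus pigeonhole to a hypothetical infinite family of solutions; both routes are equivalent packagings of the same argument.
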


\begin{proof} 
This result is an easy consequence of the theory of heights.  
Given $x\in K$, we denote by $h(x)$ the Weil absolute logarithmic height of $x$. 

\medskip

Since $K$ is a number field, it has the Northcott property, that is for every positive 
real number $M$ the set $\{x\in K \mid h(x) \leq M\}$ is finite. In particular, there exists a positive real number $\varepsilon$ depending 
only on $K$ such that if $h(x) < \varepsilon$ then $h(x)=0$. Let $n$ be an integer such that $n>h(\alpha)/\varepsilon$. 
Let us assume that there is  $\beta\in K$ such that $\beta^n=\alpha$.  Since $h(x^k)=kh(x)$ for every $x\in K$ and $k\in \mathbb N$, 
we obtain that $h(\beta) = h(\alpha)/n<\varepsilon$.  
Thus $h(\beta)=0$. By Kronecker's theorem, this implies that $\beta$ is a root of unity and thus $\alpha$ is also be a root of unity, 
a contradiction.  
\end{proof}

\begin{lem}\label{lem: group}
Let $m$ be a natural number and let $d_1,\ldots ,d_m$ be positive integers.   
Suppose that $H$ is a subgroup of $$\prod_{i=1}^m \left(\mathbb{Z}/d_i\mathbb{Z}\right)$$ 
with the property that there exist natural numbers $r_1,\ldots ,r_m$ with $$1/r_1+\cdots +1/r_m<1$$ 
such that for each $i\in \{1,\ldots ,m\}$, there is an element $h_i\in H$ whose $i$-th coordinate has order $r_i$.
Then there is an element $h\in H$ such that no coordinate of $h$ is equal to zero.
\end{lem}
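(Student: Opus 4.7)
The plan is to show that $H$ cannot be covered by the subgroups $H_i := \{h \in H : \pi_i(h) = 0\}$, where $\pi_i : \prod_j \mathbb{Z}/d_j\mathbb{Z} \to \mathbb{Z}/d_i\mathbb{Z}$ is the projection onto the $i$-th coordinate. Any element $h \in H \setminus \bigcup_{i=1}^m H_i$ has no zero coordinate, which is exactly what we want.

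The key observation is that $H_i$ is the kernel of the restriction $\pi_i|_H : H \to \mathbb{Z}/d_i\mathbb{Z}$. By the first isomorphism theorem,
\[
[H : H_i] = |\pi_i(H)|.
\]
By hypothesis $\pi_i(H)$ contains $\pi_i(h_i)$, an element of order $r_i$, so $r_i$ divides $|\pi_i(H)|$; in particular $[H : H_i] \geq r_i$, which gives $|H_i| \leq |H|/r_i$.

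Combining these estimates via the union bound,
\[
\Bigl| \bigcup_{i=1}^m H_i \Bigr| \;\leq\; \sum_{i=1}^m |H_i| \;\leq\; |H| \sum_{i=1}^m \frac{1}{r_i} \;<\; |H|,
\]
where the strict inequality is precisely the hypothesis $1/r_1 + \cdots + 1/r_m < 1$. Hence $H \setminus \bigcup_i H_i \neq \emptyset$, and any element in this set has every coordinate nonzero.

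I do not foresee any real obstacle: the whole argument is the standard observation that a finite group is not a union of subgroups whose indices have reciprocals summing to less than $1$. The only subtlety is spotting that the hypothesis about the existence of elements $h_i$ with prescribed coordinate orders is exactly what is needed to control the indices $[H : H_i]$ from below, and the quantitative assumption $\sum 1/r_i < 1$ is precisely tailored to make the union bound strict.
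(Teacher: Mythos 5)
Your proof is correct. You bound $|H_i|$ (the set of elements of $H$ with zero $i$-th coordinate) using the first isomorphism theorem and the fact that $\pi_i(H)$ contains an element of order $r_i$, then apply a union bound inside the finite group $H$. This is the standard ``a finite group is not the union of proper subgroups whose index reciprocals sum to less than $1$'' argument, and it cleanly yields an element of $H$ with all coordinates nonzero.

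The paper reaches the same conclusion by a slightly different mechanism: it restricts attention to the subgroup of $H$ generated by $h_1,\ldots,h_m$, parametrizes its elements as $x_1h_1+\cdots+x_mh_m$ with $(x_1,\ldots,x_m)\in\mathbb{Z}^m$, and shows that the (natural) density of integer tuples for which the $i$-th coordinate vanishes is at most $1/r_i$, so the bad tuples have density less than $1$. Your version avoids the detour through $\mathbb{Z}^m$ and densities, working directly with indices of the kernels $H_i=\ker(\pi_i|_H)$; it is more elementary and, I would argue, cleaner, since it does not require tracking which subgroup of $H$ the $h_i$ generate. Both proofs hinge on exactly the same two facts --- each $h_i$ forces $[H:H_i]\ge r_i$, and the hypothesis $\sum 1/r_i<1$ makes the union bound strict --- so they are genuinely the same idea expressed in a more streamlined way.

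One small point worth stating explicitly, which you left implicit: $\pi_i(H)$ is a subgroup of $\mathbb{Z}/d_i\mathbb{Z}$ containing the cyclic subgroup $\langle\pi_i(h_i)\rangle$ of order $r_i$, hence $r_i \mid |\pi_i(H)|$ by Lagrange, which is why $[H:H_i]=|\pi_i(H)|\ge r_i$. You essentially said this, but spelling out the Lagrange step makes the bound airtight.
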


\begin{proof} 
For each $i\in \{1,\ldots ,m\}$, we let 
$$\pi_i :\prod_{i=1}^m \left(\mathbb{Z}/d_i\mathbb{Z}\right) \to \mathbb{Z}/d_i\mathbb{Z}$$ 
denote the projection onto the $i$-th coordinate.
Given $(x_1,\ldots ,x_m)\in \mathbb{Z}^m$
we have that $x_1h_1+\cdots +x_m h_m\in H$.  
Observe that the density of integers $y$ for which
$$
\pi_i\left(\sum_{j\not =i} x_j h_j + y h_i\right)=0
$$ 
is equal to $1/r_i$.  Since this holds for all
$(x_1,x_2,\ldots ,x_{i-1},x_{i+1},\ldots ,x_m)\in  \mathbb{Z}^{m-1}$, we see that the density of
$(x_1,\ldots ,x_m)\in \mathbb{Z}^m$ for which
$$
\pi_i\left(\sum_{j=1}^m x_j h_j \right)=0
$$ 
is equal to $1/r_i$.  Thus the density of 
$(x_1,\ldots ,x_m)\in \mathbb{Z}^m$ for which
$$
\pi_i\left(\sum_{j=1}^m x_j h_j \right)=0
$$ 
holds for some $i\in\{1,\ldots,m\}$ is at most
$$
1/r_1+\cdots +1/r_m<1 \, .
$$  
In particular, we see that there is some
 $(x_1,\ldots ,x_m)\in \mathbb{Z}^m$ such that 
the element $h:=x_1h_1+\cdots +x_m h_m\in H$ has no coordinate equal to zero.
\end{proof}

\begin{lem} Let $k\geq 2$ be an integer, let $R$ be a principal localization of a number ring, 
let $\mathfrak P$ be a nonzero prime ideal of $R$, and let $a$ be an element of $R$.  Suppose that for some natural number $n$, 
the polynomial $1-ax^{k^n} \bmod \mathfrak P$ has no roots in $R/\mathfrak P$.   
Then the infinite product 
$$
 \left(\prod_{j=0}^{\infty} (1-ax^{k^j})\right)^{-1} \bmod \mathfrak P
$$ 
is a $k$-automatic power series in $(R/\mathfrak P) [[x]]$.
\label{lem: auto}
\end{lem}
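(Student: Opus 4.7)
The quotient $K:=R/\mathfrak{P}$ is a finite field, since $R$ is a principal localization of a number ring and $\mathfrak{P}$ is nonzero. Writing $\bar a:=a\bmod\mathfrak{P}\in K$, the task reduces to showing that $\bar F(x):=\prod_{j\ge 0}(1-\bar a x^{k^j})^{-1}\in K[[x]]$ is $k$-automatic. The case $\bar a=0$ is trivial, so assume $\bar a\in K^{*}$. Then $\bar F$ satisfies the Mahler equation $\bar F(x^k)=(1-\bar a x)\bar F(x)$, and since $K$ is finite, Proposition~\ref{prop: reg2}(ii) reduces the task further to proving $\bar F$ is $k$-\emph{regular}.

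The hypothesis forces the multiplicative order $d$ of $\bar a$ in $K^{*}$ to share a prime factor with $k$. Indeed, if $\gcd(d,k)=1$ then the $k^n$-th power map is a bijection on the cyclic subgroup $\langle\bar a\rangle\subseteq K^{*}$, so $\bar a^{-1}$ would admit a $k^n$-th root in $K^{*}$, yielding a root of $1-\bar a x^{k^n}$ in $K$ and contradicting the assumption. From $\gcd(d,k)>1$ it follows that $\bar a^{k^j}\ne\bar a$ for every $j\ge 1$, since such an equality would force $d\mid k^j-1$ and hence $\gcd(d,k)=1$. Thus $\bar a$, viewed as a root of unity in $K^{*}$, satisfies exactly the non-degeneracy hypothesis appearing in Proposition~\ref{prop: omega}.

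The proof of Proposition~\ref{prop: omega} is purely algebraic and transfers verbatim from $\mathbb{C}$ to our base field $K$. Choosing the smallest $N\ge 1$ with $\bar a^{k^{2N}}=\bar a^{k^N}$, setting $\beta:=\bar a^{k^N}$ (so that $\beta^{k^N}=\beta$ and $\beta\ne\bar a$), and forming $Q(x):=\prod_{j=0}^{N-1}(1-\beta x^{k^j})\in K[x]$, one computes the telescoping ratio $Q(x^k)/Q(x)=(1-\beta x^{k^N})/(1-\beta x)=1+\beta x+\cdots+(\beta x)^{k^N-1}$. Substituting $x=1/\bar a$ shows that $(1-\bar a x)$ divides this polynomial: the geometric sum $\sum_{i=0}^{k^N-1}(\beta/\bar a)^i$ vanishes because $\beta/\bar a=\bar a^{k^N-1}$ is a $k^N$-th root of unity different from $1$ (non-triviality follows from $\gcd(d,k)>1$, which gives $d\nmid k^N-1$). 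Writing $Q(x^k)/Q(x)=(1-\bar a x)S(x)$ for a polynomial $S(x)\in K[x]$, the series $G(x):=Q(x)^{-1}\bar F(x)$ satisfies the Becker equation $G(x)=S(x)G(x^k)$ and is therefore $k$-regular by Theorem~\ref{thm: beck}. Consequently $\bar F=Q\cdot G$ is $k$-regular by Proposition~\ref{prop: reg2}(iii), and $k$-automatic by Proposition~\ref{prop: reg2}(ii). The only delicate point is checking that the characteristic-zero divisibility step of Proposition~\ref{prop: omega} remains valid over $K$; this poses no obstruction, as the argument reduces to the vanishing of a geometric sum indexed by a non-trivial root of unity, an identity valid in any field.
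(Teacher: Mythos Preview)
Your proof is correct and follows essentially the same approach as the paper's. Both arguments reduce the hypothesis to the fact that $\bar a^{k^j}\ne\bar a$ for all $j\ge 1$ (the paper phrases this as the orbit $\bar a,\bar a^k,\bar a^{k^2},\ldots$ being eventually periodic but not periodic from the start, which is equivalent to your $\gcd(d,k)>1$), and then both invoke the construction from Proposition~\ref{prop: omega} with $\beta=\bar a^{k^N}$ and $Q(x)=\prod_{j=0}^{N-1}(1-\beta x^{k^j})$ to exhibit $\bar F$ as a polynomial times a $k$-Becker series. Your version is in fact slightly more careful than the paper's, which simply writes ``arguing exactly as in the proof of Proposition~\ref{prop: omega}'': you explicitly verify via the geometric-sum identity that the divisibility $(1-\bar a x)\mid Q(x^k)/Q(x)$ survives passage to positive characteristic.
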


\begin{proof}  Set $F(x):= \displaystyle\prod_{j=0}^{\infty} (1-ax^{k^j})^{-1} \bmod \mathfrak P$. 
Without loss of generality we can assume that $a$ does not belong to $\mathfrak P$. 
Let us first note that the sequence
$a,a^k,a^{k^2},\ldots $ is necessarily eventually periodic modulo $\mathfrak P$. 
However, it cannot be periodic, as otherwise the polynomial 
$1-ax^{k^n}$ would have a root for every natural number $n$.  
Thus there exists  a positive integer $N$ such that 
$$
a\not\equiv a^{k^{N}}\equiv a^{k^{2N}}  ~\bmod~\mathfrak P \,.
$$
Set $b:=a^{k^N}$ and let us consider the polynomial
$$
Q(x):=(1-bx)(1-bx^k)\cdots (1-bx^{k^{N-1}}) \, .
$$
Now arguing exactly as in the proof of Proposition \ref{prop: omega}, we see that 
there exist polynomial $S(x)\in R[x]$ such that $G(x):=Q(x)^{-1}F(x)$ satisfies the equation 
$$
G(x)\equiv S(x)G(x^k) ~ \bmod \mathfrak P \, .
$$
Thus Theorem 5.3 implies that $G(x)\bmod \mathfrak P$ is a $k$-regular power series in $(R/\mathfrak P)[[x]]$.  
By Proposition \ref{prop: reg2},  we see that $F(x)\bmod \mathfrak P$ is  a $k$-regular power series since it is  a 
product of a polynomial (which is $k$-regular) and a $k$-regular power series.  
Since the base field is finite, Proposition \ref{prop: reg2} gives that $F(x)\bmod \mathfrak P$ is actually 
a $k$-automatic power series.  
This ends the proof.  
\end{proof}

\begin{proof}[Proof of Theorem \ref{thm: reduction}] 
By assumption $R$ is a principal localization of a number field $K$. 
Let $L$ be the Galois extension of $K$ generated by all complex roots of the polynomial $P(x)Q(x)$. 
Thus there are $\alpha_1,\ldots,\alpha_d,\beta_1,\ldots,\beta_e\in L$ such that 
$P(x)=(1-\alpha_1x)\cdots (1-\alpha_dx)$ and $Q(x)=(1-\beta_1x)\cdots (1-\beta_ex)$.  
We fix a prime $p$ that divides $k$ and a prime $q$ that divides $\ell$.
Let $s$ be a natural number such that $p^s$ and $q^s$ are both larger than $d+e$.  
Since by assumption none of the roots of $P(x)Q(x)$ is a root of unity, 
Lemma \ref{lem: power} implies that, for $1\le i\le d$ and $1\le j\le e$, there 
are largest nonnegative integers $n_i$ and $m_j$ with the property that we can write $\alpha_i=\gamma_i^{p^{n_i}}u_i$ 
and $\beta_j=\delta_j^{q^{m_j}}v_j$ for some elements $\gamma_i,\delta_j\in L(e^{2\pi i/(p^s q^s)})$ and $u_i,v_j$ 
roots of unity in $L(e^{2\pi i/(p^s q^s)})$.

  \medskip
  
 Next let $n$ denote a natural number that is strictly larger than the maximum of the $n_i$ 
 and the $m_j$ for $1\le i\le d$ and $1\le j\le e$. 
Set $E:=L(e^{2\pi i/(p^n q^n)})$ and let $F$ denote the Galois extension of $E$ generated by all complex 
roots of the polynomial 
$$
\prod_{i=1}^d \prod_{j=1}^{e} (x^{p^n}-\gamma_i)(x^{q^n}-\delta_j) \,.
$$  
For each $i$, $1\leq i \leq d$, we pick a root 
$\gamma_{i,0}$ of $x^{p^n}-\gamma_i$, and for each $j$, $1\leq j\leq e$, 
we pick a root $\delta_{j,0}$ of $x^{q^n}-\delta_j$.

\medskip

\noindent{\it Claim.} We claim that for every integer $i$, $1\leq i\leq d$, 
there is an automorphism $\sigma_i$ in ${\rm Gal}(F/E)$ such that 
$$
\sigma_i(\gamma_{i,0}) = \gamma_{i,0} u \, ,
$$ 
with $u$ a primitive $p^r$-th root of unity for some $r$ 
greater than or equal to $s$.  
Similarly,  for every integer $j$, $1\leq j\leq e$,  there is an automorphism 
$\tau_j$ in ${\rm Gal}(F/E)$ that such that
$$
\tau_j(\delta_{j,0})=\gamma_{j,0} u' \,,
$$ 
for some primitive $q^{r'}$-th root of unity $u'$ with $r'$ greater than or equal to $s$.

\medskip

\noindent{\it Proof of the claim.} 
Note that 
$$
\left\{\frac{\sigma(\gamma_{i,0})}{\gamma_{i,0}}~\mid~\sigma\in {\rm Gal}(F/E)\right\}
$$ 
forms a subgroup of the $p^n$-th roots of unity.  
To prove the claim we just have to prove that this group cannot be contained in the group of $p^{s-1}$-st roots of unity. 
Let us assume that this is the case. Then the product of the Galois conjugates 
of $\gamma_{i,0}$ must be $\tilde{\gamma_i}:=\gamma_{i,0}^{p^t}v$ for some $t< s$ 
and some root of unity $v$.  Moreover, $\tilde{\gamma_i}$ lies in $L(e^{2\pi i/(p^nq^n)})$.  
Note that the Galois group of $L(e^{2\pi i/(p^nq^n)})$ over $L(e^{2\pi i/(p^s q^s)})$ has order dividing 
$\phi(p^nq^n)/\phi(p^s q^s)= p^{n-s}q^{n-s}$.  Since all conjugates of $\tilde{\gamma_i}$ are equal 
to $\tilde{\gamma_i}$ times some root of unity, we see that the relative norm of 
$\tilde{\gamma_i}$ with respect to the subfield $L(e^{2\pi i/(p^s q^s)})$ is of the form
$\tilde{\gamma_i}^d v'$ for some divisor $d$ of $p^{n-s}q^{n-s}$ and some root of unity
 $v'$.  Moreover, $$\tilde{\gamma_i}^d v' \in L(e^{2\pi i/(p^s q^s)}) \, .$$  
 Note that the gcd of $d$ and $p^{n-t}$ is equal to $p^{n-s_0}$ for some integer $s_0\geq s$. 
 Since $\gamma_{i,0}^{p^n}= \tilde{\gamma_i}^{p^{n-t}} v^{-p^{n-t}}\in L(e^{2\pi i/(p^s q^s)})$, 
 we see by expressing $p^{n-s_0}$ as an integer linear combination of  $d$ and $p^{n-t}$ that
 $$
 \tilde{\gamma_i}^{p^{n-s_0}}\omega = \gamma_{i,0}^{p^{n-s_0+t}}\omega' 
 \in L(e^{2\pi i/(p^s q^s)})
 $$ 
 for some roots of unity $\omega$ and $\omega'$ and some $s_0\ge s$.  
 But $s_0-t\ge 1$ and so we see 
 that $\alpha_i$ is equal to a root of unity times 
$$
\left(\gamma_{i,0}^{p^{n-s_0+t}}\omega'\right)^{p^{s_0-t+n_i}} \,,
$$
contradicting the maximality of $n_i$.   This confirms the claim. $\square$

\medskip

For an integer $m$, we let $\mathbb U_m$ denote the subgroup of $\mathbb{C}^*$ 
consisting of all $m$-th roots of unity.  
Note that we can define a group homomorhpism $\Phi$ from ${\rm Gal}(F/E)$ to 
$(\mathbb U_{p^n})^d\times (\mathbb U_{q^n})^e$ by 
$$
\Phi(\sigma) :=  (\sigma(\gamma_{1,0})/\gamma_{1,0}, \ldots, \sigma(\gamma_{d,0})/\gamma_{d,0}, 
\sigma(\delta_{1,0})/\delta_{1,0},\ldots,\sigma(\delta_{e,0})/\delta_{e,0})  \, .
$$ 
We see that $\Phi$ is a group homomorphism since each $\sigma\in {\rm Gal}(F/E)$ fixes the $p^n$-th and $q^n$-th roots of unity.  
Set $H:=\Phi({\rm Gal}(F/E))$. 
The claim implies that the $i$-th coordinate in $(\mathbb U_{p^n})^d$ of $\Phi(\sigma_i)$  has order at least equal to $p^s$. 
Similarly, it also implies that the $j$-th coordinate  in $(\mathbb U_{q^n})^e$ of $\Phi(\tau_j)$ has order at least equal to $q^s$.  
Since $p^s$ and $q^s$ are both greater than $d+e$, we have
$$
d/p^s+e/q^s < 1\, .
$$ 
Now, since $(\mathbb U_{p^n})^d\times (\mathbb U_{q^n})^e\cong 
(\mathbb Z/p^n\mathbb Z)^d\times (\mathbb Z/q^n \mathbb Z)^e$, we infer from Lemma \ref{lem: group} 
that there exists an element $h$ in $H$ such that every coordinate of $h$ is different from the identity element. 
In other words, this means that there exists some element $\tau$ of ${\rm Gal}(F/E)$ that fixes no element in the set  
$$
\{\gamma_{i,0} \mid 1\leq i \leq d\} \cup \{\delta_{j,0} \mid 1 \leq j \leq e\} \, .
$$ 
Since by definition $\tau$ fixes all $p^n$-th and $q^n$-th roots of unity, 
we see more generally that no root of the polynomial 
$$
\prod_{i=1}^d \prod_{j=1}^{e} (x^{p^n}-\gamma_i)(x^{q^n}-\delta_j) 
$$
is fixed by $\tau$.  
Since $\tau$ belongs to ${\rm Gal}(F/E)$, we can see $\tau$ as an element of ${\rm Gal}(F/K)$ that fixes all elements of 
$E$. We have thus produce an element $\tau$ of ${\rm Gal}(F/K)$ that fixes all roots of $P(x)Q(x)$ but that 
that does not fix any of the roots of the polynomial 
$$
\prod_{i=1}^d \prod_{j=1}^{e} (x^{p^n}-\gamma_i)(x^{q^n}-\delta_j) \, .
$$ 
It follows from Chebotarev's density theorem (see for instance the discussion in \cite{LeSt}) 
that there is an infinite set of nonzero prime ideals $\mathcal S\subseteq {\rm Spec}(R)$  
such that if $\mathfrak P\in \mathcal S$ then $P(x)Q(x) \bmod \mathfrak P$ factors into linear terms while  
the minimal polynomial of 
$$
\prod_{i=1}^d \prod_{j=1}^{e} (x^{p^n}-\gamma_i)(x^{q^n}-\delta_j)
$$ 
over $K$ has no root modulo $\mathfrak P$.  
In particular, there is a natural number $N$ larger than $n$ such that for all such 
prime ideals $\mathfrak P$, the polynomial $P(x)Q(x)\bmod\mathfrak P$ splits into linear factors,  
while the polynomial $P(x^{p^N})Q(x^{q^N}) \mod \mathfrak P$ does not have any roots in $R/\mathfrak P$.  

\medskip

For such a prime ideal $\mathfrak P$, there thus exist $a_1,\ldots,a_d,b_1,\ldots,b_e$ 
in the finite field $R/\mathfrak P$ such that 
$$
P(x)\equiv (1-a_1x)\cdots (1-a_dx)~ \bmod\mathfrak P
$$ 
and 
$$
Q(x)\equiv (1-b_1x)\cdots (1-b_dx)~ \bmod\mathfrak P \, .
$$ 
Then
$$
\left(\prod_{j= 0}^{\infty} P(x^{k^j})\right)^{-1} \equiv \prod_{i=1}^d\left( \prod_{j= 0}^{\infty} 
(1-a_ix^{k^j})\right)^{-1} \bmod \mathfrak P\, .
$$
By Lemma \ref{lem: auto} the right-hand side is a product of $k$-automatic power series and hence, 
by Proposition \ref{prop: reg2},  is $k$-automatic.  
Thus the infinite product 
$$
\left(\prod_{j= 0}^{\infty} P(x^{k^j})\right)^{-1}\bmod \mathfrak P
$$ 
is a $k$-automatic power series in $R/\mathfrak P[[x]]$. Similarly, we get that 
$$
\left(\prod_{j= 0}^{\infty}  Q(x^{\ell^j})\right)^{-1} \equiv \prod_{i=1}^e \left(\prod_{j=0 }^{\infty} (1-b_ix^{\ell^j})\right)^{-1} \bmod \mathfrak P \, ,
$$ 
which implies that the infinite product 
$$
\left(\prod_{j= 0}^{\infty} Q(x^{\ell^j})\right)^{-1}\bmod \mathfrak P
$$ 
is a $\ell$ automatic power series in $R/\mathfrak P[[x]]$.  
This concludes the proof. 
\end{proof}



\section{Proof of Theorem \ref{thm: main}}\label{sec: main}

We are now ready to prove our main result.

\begin{proof}[Proof of Theorem \ref{thm: main}]

Let $K$ be a field of characteristic zero and $k$ and $l$ be two multiplicatively independent positive integers. 

\medskip

We first note that if 
$F(x)\in K[[x]]$ is a rational function, then for every integer $m\geq 2$, it obviously satisfies a functional equation as in 
(\ref{eq: mahler}) with $n=0$. Hence, $F(x)$ is $m$-Mahler, which gives a first implication.   

\medskip

To prove the converse implication, we fix $F(x)\in K[[x]]$ that is both $k$- and $\ell$-Mahler and we aim at proving 
that $F(x)$ is a rational function. 
Of course, if $F(x)$ is a polynomial, there is nothing to prove.  
From now on, we thus assume that $F(x)$ is not a polynomial.
By Corollary \ref{cor: pqkl}, we can assume that there are primes $p$ and $q$ such that 
$p$ divides $k$ while $p$ does not divide $q$ and such that $q$ divides $\ell$ while $q$ does not divide $k$.  
By Theorem \ref{thm: red1}, we can assume that there is a ring $R$ that is a principal localization of a number ring such that 
$F(x)\in R[[x]]$ and satisfies the equations
$$
\sum_{i=0}^n P_i(x)F(x^{k^i}) \ = \ 0
$$ 
with $P_0,\ldots ,P_d\in R[x]$ 
and  
$$
\sum_{i=0}^m Q_i(x)F(x^{\ell^i}) \ = \ 0
$$ 
with $Q_0,\ldots ,Q_e\in R[x]$. 
Without loss of generality, we can assume that all complex roots of $P_0(x)$ and $Q_0(x)$ belong to $R$ 
(otherwise we could just enlarge $R$ by adding these numbers).  
Furthermore, we can assume that $P_0(x)Q_0(x)\neq 0$. 
By Corollary \ref{cor: P01Q01}, we can also
assume that $P_0(0)=1$ and that $Q_0(0)=1$, for otherwise we could just replace $F(x)$ by the power series 
$F_0(x)$ given there.   
We choose a ring embedding of $R$ in $\mathbb{C}$ and for the moment we regard 
$F(x)$ as a complex power series. 
By Theorem \ref{thm: elim}, we can assume that if $\alpha$ is a root of unity such that 
$\alpha^{k^j}=\alpha$ for some positive integer $j$, then $P_0(\alpha)\neq 0$. 
 Similarly, we can assume that if $\beta$ is a root of unity such that $\beta^{\ell^j}=\beta$ for some positive integer $j$, 
 then $Q_0(\beta)\neq 0$. 

\medskip

By Proposition \ref{rem: decomp}, we can write 
$$
F(x)=\left(\prod_{j=0}^{\infty} P_0(x^{k^j})\right)^{-1} G(x)\, ,
$$ 
for some $k$-regular power series $G(x)\in R[[x]]$. 
Furthermore, we can decompose $P_0(x)$ as $P_0(x)=S_0(x)S_1(x)$, where $S_0(x)$ and $S_1(x)$ are two polynomials, 
the zeros of $S_0(x)$ are all roots of unity, none of the zeros of $S_1(x)$ are roots of unity, and $S_0(0)=S_1(0)=1$.  
Since by assumption all roots of $P_0(x)$ lie in $R$, we get that both $S_0(x)$ and $S_1(x)$ belong to $R[x]$. 
By assumption if $\alpha$ is a root of $S_0(x)$ then for every positive integer $j$, one has $\alpha^{k^j}\neq \alpha$.  
Then, it follows from Proposition \ref{prop: omega} that
$$
\left(\prod_{j= 0}^{\infty} S_0(x^{k^j})\right)^{-1} \in R[[x]] 
$$ 
is a $k$-regular power series.  
Set $H:= \displaystyle\prod_{j= 0}^{\infty} S_0(x^{k^j})^{-1}G(x)$. We infer from Proposition \ref{prop: reg2} 
that $H(x)$ is a $k$-regular power series.  Moreover, one has 
\begin{equation}
\label{eq: decH}
F(x) = \left(\prod_{j = 0}^{\infty} S_1(x^{k^j})\right)^{-1} H(x) \, .
\end{equation}

Similarly, by Proposition \ref{rem: decomp}, we can write 
$$
F(x)=\left(\prod_{j = 0}^{\infty} Q_0(x^{k^j})\right)^{-1} I(x)\, ,
$$ 
for some $k$-regular power series $I(x)\in R[[x]]$. 
As previously, we can decompose $Q_0(x)$ as $Q_0(x)=T_0(x)T_1(x)$, 
where $T_0(x)$ and $T_1(x)$ belong to $R[x]$, 
the zeros of $T_0(x)$ are all roots of unity, none of the zeros of $T_1(x)$ are roots of unity, and $T_0(0)=T_1(0)=1$. 
 By assumption if $\beta$ is a root of $T_0(x)$ then for every positive integer $j$, one has $\beta^{\ell^j}\neq \beta$.  
Then it follows from Proposition \ref{prop: omega} that
$$
\left(\prod_{j= 0}^{\infty} T_0(x^{\ell^j})\right)^{-1} \in R[[x]]
$$ 
is a $\ell$-regular power series. 
Set $J:= \displaystyle\prod_{j= 0}^{\infty} T_0(x^{k^j})^{-1} I(x)$. Again, we see by Proposition \ref{prop: reg2} 
that $J(x)$  is $\ell$-regular.   
Moreover, one has 
\begin{equation}
\label{eq: decH2}
F(x) = \left(\prod_{j= 0}^{\infty} T_1(x^{k^j})\right)^{-1} J(x) \, .
\end{equation}

\medskip

By Theorem \ref{thm: reduction}, there is an infinite set of nonzero prime ideals $\mathcal{S}$ of $R$ such that,  
for every prime ideal $\mathfrak P$ in $\mathcal S$, 
$$
\left(\prod_{j= 0}^{\infty} S_1(x^{k^j})\right)^{-1} \bmod \mathfrak P
$$ 
is a $k$-automatic  power series in $(R/\mathfrak P)[[x]]$ 
and 
$$
\left(\prod_{j= 0}^{\infty} T_1(x^{\ell^j})\right)^{-1} \bmod \mathfrak P
$$ 
is a $\ell$-automatic power series in $(R/\mathfrak P)[[x]]$.   
Then we infer from Equalities (\ref{eq: decH}) and (\ref{eq: decH2}) that, 
for $\mathfrak P\in \mathcal{S}$,  $F(x) \bmod \mathfrak P$ is $k$-regular for it is  the product of two $k$-regular power series.   
Similarly, $F(x) \bmod \mathfrak P$ is a $\ell$-regular power series.   

We recall that the principal localization of a number ring is a  
\emph{Dedekind domain}; that is, it is a noetherian normal domain of  
Krull dimension one.  In particular, all nonzero prime ideals are  
maximal. Now since $R$ is a finitely generated $\mathbb Z$-algebra and $\mathfrak P$ is a maximal ideal, 
the quotient ring $R/\mathfrak P$ is a finite field (see \cite[Theorem 4.19, p. 132]{Ei}). 
By Proposition \ref{prop: reg2}, this implies  
that  $F(x) \bmod \mathfrak P$ is actually both $k$- and $\ell$-automatic.  
By Cobham's theorem, we obtain that the sequence of coefficients of $F(x)\bmod\mathfrak P$ is eventually 
periodic and hence $F(x) \bmod \mathfrak P$ is a rational function. 

\medskip

Note that since $\mathcal{S}$ is infinite, the intersection of all ideals in   
$\mathcal{S}$ is the zero ideal (see \cite[Lemma 4.16, p. 130]{Ei}). 
Moreover, $F(x)\bmod \mathfrak P$ is rational for every prime ideal $\mathfrak P\in\mathcal S$.    
Applying Lemma \ref{lem: rational}, we obtain that $F(x)$ is a rational function. This ends the proof. 
\end{proof}

\bigskip

\noindent{\bf \itshape Acknowledgement.}\,\,--- The authors would like to thank Michel Mend\`es France for his 
comments and encouragements. The first author is indebted to  \'Eric Delaygue for his help with Maple. 
He is also most grateful to Macha and Vadim for inspiring discussions all along the preparation of this paper. 

 \end{document}